\documentclass[1 [leqno,11pt]{amsart}
\usepackage{amssymb, amsmath,latexsym,amsfonts,amsbsy, amsthm,mathtools,graphicx,CJKutf8,CJKnumb,CJKulem,color}
%%%%%%%%%%%%%%%%%%%%%%%%%%%%%%%%%%
%\usepackage{verbatim}
\usepackage{float}
%\usepackage{tikz}
%\usetikzlibrary{arrows, decorations.markings}
%\usepackage{subfigure}
\usepackage{hyperref}
%\usepackage{mathpazo}
%\usetikzlibrary{shapes.geometric, arrows}
%\usepackage{pgfplots}

%\usepackage{pdfsync}
\setlength{\oddsidemargin}{0mm}
\setlength{\evensidemargin}{0mm} \setlength{\topmargin}{0mm}
\setlength{\textheight}{220mm} \setlength{\textwidth}{155mm}

\numberwithin{equation}{section}

\allowdisplaybreaks
%%%%%%%%%%%%%%%%%%%%%%%%%%%%%%%%%%

%%%My setting%%%%%%%%%%%%%%%%%%%%%%%%%%%%%%%%%%%%%%%%%%%%%%%%%%%

%%%ABREVIATIONS%%%%%%
\let\al=\alpha

\let\f=\frac

\let\na=\nabla

\let\pa=\partial

%%LETTRES RONDES%%

%%MACROS SANS ARGUMENTS%%%%%%%%%%%%%%%%%

\def\R{\mathbb R}
\def\Z{\mathbb Z}

\def\an{\langle n\rangle}

\def\no{\noindent}

\newcommand{\beq}{\begin{equation}}
\newcommand{\eeq}{\end{equation}}
\newcommand{\ben}{\begin{eqnarray}}
\newcommand{\een}{\end{eqnarray}}
\newcommand{\beno}{\begin{eqnarray*}}
\newcommand{\eeno}{\end{eqnarray*}}

\newcommand{\reff}[1]{(\ref{#1})}

%%theorem%%%%%%%%%%%%%%%%%%%%%%%%%%%%%%%%
\newtheorem{theorem}{Theorem}[section]

\newtheorem{lemma}[theorem]{Lemma}
\newtheorem{proposition}[theorem]{Proposition}

\newtheorem{remark}[theorem]{Remark}
%%

%%%%%%%%%%%%%%%%%%%%%%%%%%%%%%%%%%%%%%%%%%%%%%

%%%%%%%%%%%%%%end of my setting%%%%%%%%%%%%%%%%%%%%%%%%%%%%%%%%%%%%%%
\begin{document}
\begin{CJK*}{UTF8}{gkai}

\title[$L^\infty$ stability of Prandtl expansions]
{On the $L^\infty$ stability of Prandtl expansions in Gevrey class}

\author{Qi Chen}
\address{School of Mathematical Science, Peking University, 100871, Beijing, P. R. China}
\email{chenqi940224@gmail.com}

\author{Di Wu}
\address{School of Mathematical Science, Peking University, 100871, Beijing, P. R. China}
\email{wudi@math.pku.edu.cn}

\author{Zhifei Zhang}
\address{School of Mathematical Science, Peking University, 100871, Beijing, P. R. China}
\email{zfzhang@math.pku.edu.cn}

\date{\today}%2019.09

\maketitle

\begin{abstract}
In this paper, we prove the $L^\infty\cap L^2$ stability of  Prandtl expansions of shear flow type as $\big(U(y/\sqrt{\nu}),0\big)$
for the initial perturbation in the Gevrey class, where $U(y)$ is a monotone and concave function and $\nu$ is the viscosity coefficient.
To this end, we develop the direct resolvent estimate method for the linearized Orr-Sommerfeld operator instead of the Rayleigh-Airy iteration method. Our method could be used to the other relevant problems in the hydrodynamic stability.
\end{abstract}

\section{Introductions}

In this paper, we study the incompressible Navier-Stokes equations in $\Omega:=\mathbb{T}\times\mathbb{R}_+$ when the viscosity coefficient $\nu$ tends to zero:
\begin{align}\label{NS}
\left\{
\begin{aligned}
&\partial_t u^\nu+u^\nu\cdot\nabla u^\nu+\nabla p^\nu-\nu\Delta u^\nu=f^\nu\quad \text{in $[0,T]\times\Omega$},\\
&\nabla\cdot u^\nu=0\quad \text{in $[0,T]\times\Omega$},\\
&u^\nu|_{\partial\Omega}=0\quad\text{on $[0,T]\times\partial\Omega$},\\
&u^\nu(0)=u_0\quad\text{in $\Omega$}.
\end{aligned}
\right.
\end{align}
Here $u^\nu=\big(u^\nu_1, u^\nu_2\big)$ is the velocity field, $p^\nu$ is the pressure and $f^\nu$ is the external force.  \smallskip

In the absence of the boundary, the solution $u^\nu$ of the Navier-Stokes equations  converges to the solution $u^e$ of the Euler equations as $\nu \to 0$:
\begin{align*}
\pa_t u^{e}+ u^{e}\cdot\na u^{e}+\na p^{e}=0,\quad  \na \cdot u^{e}=0.
\end{align*}
This limit has been justified in various functional settings \cite{Kato, Swann, BM, Mas, CW, AD, Mar}.

In the presence of the boundary, the inviscid limit problem becomes more complicated due to the appearance of boundary layer.
For the Navier-slip boundary condition, since the boundary layer is weak, the limit from the Navier-Stokes equations to the Euler equations was  justified in {2-D} by Clopeau, Mikeli\'{c} and Robert \cite{CMR}, and in {3-D} by Iftimie and Planas \cite{IP}.
See \cite{MR, IS, XX, WXZ}  for more relevant results. For the nonslip boundary condition, the boundary layer is strong so that  when $\nu \to 0$, the solution of \eqref{NS} formally behaves as
\ben\label{eq:Pran-exp}
 \left\{
 \begin{array}{l}
 u^{\nu}_1(t,x,y)=u^{e}_1(t,x,y)+ u^{BL}\big(t,x,\f{y}{\sqrt{\nu}}\big)+O(\sqrt{\nu}),\\
 u^{\nu}_2(t,x,y)=u^{e}_2(t,x,y)+\sqrt{\nu}v^{BL}\big(t,x,\f{y}{\sqrt{\nu}}\big)+O(\sqrt{\nu}),
 \end{array}\right.
 \een
where $(u^p,v^p)=\big(u^e_1(t,x,0)+u^{BL}(t,x,Y), \pa_yu^e_2(t,x,0)Y+v^{BL}(t,x,Y)\big)$ satisfies the Prandtl equation
 \begin{eqnarray}\label{equ:P}
\left\{\begin{aligned}
&\partial_t u^p+u^p\pa_x u^p+v^p\pa_Y u^p+\pa_xp^e|_{y=0}=\pa^2_{Y}u^p,\\
&\pa_xu^p+\pa_Y v^p=0,\\
&u^p|_{Y=0}=v^p|_{Y=0}=0,\quad\lim_{Y\rightarrow+\infty}u^p(t,x,Y)=u^e_1(t,x,0).
\end{aligned}\right.
\end{eqnarray}

To our knowledge, the justification of the Prandtl expansion \eqref{eq:Pran-exp} is still a challenging problem except some special cases:
 the analytic data  \cite{SC2}(see \cite{WWZ} for a new proof via direct energy method), and
 the initial vorticity vanishing near the boundary \cite{Mae, FTZ}. In addition, the convergence was justified in \cite{LMT, MT}  when the domain and the initial data have a circular symmetry. Initiated by Kato \cite{Kato1}, there are many works devoted to the conditional convergence \cite{TW, Wang, Ke}. Let us mention some recent well-posedness results of the Prandtl equation \cite{SC1, XZ, AW, MW, GD, GM, CWZ, LY, DG}, which are a key step toward the inviscid limit problem. 
 \smallskip

Recently,  the stability of some special boundary layer solutions received a lot of attention.  For example, Grenier studied the Prandtl expansion of shear type flow  as
\ben\label{eq:Prandtl exp-shear}
u^\nu_s=\big(U^e(t,y),0\big)+\Big(U^{BL}\big(t,\f y {\sqrt{\nu}}\big),0\Big).
\een
When the shear flow $U^{BL}(0,Y)$ is linearly unstable for the Euler equations, he proved the instability of the expansion in the $H^1$ space by constructing the solution with the highly oscillating as $e^{i\al x/\sqrt{\nu}}$ and the growth as $e^{ct|n|}$ at the high frequency $n=\frac 1 {\sqrt{\nu}}\gg1 $.  In a recent important work, Grenier and Nguyen \cite{GNg} proved the $L^\infty$ instability of the Prandtl expansion \eqref{eq:Prandtl exp-shear}. In another important work, Guo,  Grenier and Nguyen proved that the shear flows which are linearly stable for the Euler equations could be linearly unstable for the Navier-Stokes equations when $\nu$ is very small, where they constructed the solution with the growth as $e^{c|n|^\frac 23t}$. Their result in particular implies that it is possible to prove the stability of monotone and concave shear flows  in the Gevrey class $\frac 32$. In a remarkable work \cite{GMM}, Gerard-Varet,  Masmoudi  and  Maekawa proved the stability of the Prandtl expansion \eqref{eq:Prandtl exp-shear}
for the perturbations in the Gevrey class when $U^{BL}(t,Y)$ is a monotone and concave function. Roughly speaking, they showed that if the initial perturbation $a(x,y)$ satisfies $\|a\|_{G_\gamma}\le \nu^{\frac 12+\beta}$ for $\beta=\frac {2(1-\gamma)} \gamma$, where $G_\gamma$ is a norm of Gevrey class $\frac 1{\gamma}$ with $\gamma\in (0,1]$ depending on the profile $U^{BL}(t,Y)$,  then
\beno
\sup_{t\in [0,T]}\big(\|v^\nu(t)\|_{L^2}+(\nu t)^\frac 12\|\na v^\nu(t)\|_{L^2}+(\nu t)^\frac 14\|v^\nu(t)\|_{L^\infty}\big)\le C\|a\|_{G_\gamma},
\eeno
where $v^\nu(t,x,y)=u^\nu(t,x,y)-\big(U^e(t,y),0\big)+\big(U^{BL}\big(t,\frac y {\sqrt{\nu}}\big),0\big)$. The $L^\infty$ stability estimate, which is in fact an interpolation result  between $L^2$ estimate and $H^1$ estimate, will blow up  when $t\to 0$ due to the prefactor $t^{\frac 14}$. Their proof relies on the resolvent estimates for the linearized  operator via the Rayleigh-Airy iteration method introduced in \cite{GGN}.

For the steady Navier-Stokes equations,
Gerard-Varet and Maekawa \cite{GMa} proved the stability of  shear flows  $\big(U\big(\frac y {\sqrt{\nu}}\big),0\big)$  for the external force $f^\nu$ in the Sobolev space, and Guo and  Iyer  \cite{GI} proved the stability of Blasius flows. Guo and Nguyen \cite{GN} also considered the Prandtl expansions of steady Navier–Stokes equations over a moving plate.

\smallskip

The goal of this paper is twofold. First of all, we would like to prove the $L^\infty$ stability estimate of  the Prandtl expansion \eqref{eq:Prandtl exp-shear} without the prefactor $t^\frac 14$, i.e., $\nu^\frac 14\|v^\nu(t)\|_{L^\infty}\le C\|a\|_{G_\gamma}$.
Secondly, we would like to develop a direct resolvent estimate method for the linearized  operator instead of the Rayleigh-Airy iteration method.
\smallskip

For the simplicity, we take $U^e(t,y)\equiv 1$ and  $U^{BL}(t,Y)=U^P(Y)-1$ in \eqref{eq:Prandtl exp-shear}, where $U^P=U^P(Y)$ is a scalar function on $\mathbb{R}_+$ satisfying
\[
\lim_{Y\to+\infty} U^P(Y)=1,\quad U^P(Y=0)=0.
\]
Taking the external force
\[
f^\nu=\Big(-\partial_Y^2 U^P\big(\frac{y}{\sqrt{\nu}}\big),0\Big),
\]
we may write the solution of \reff{NS} in the perturbation form
\[
u^\nu(t,x,y)=\Big(1+U^{BL}\big(\frac y {\sqrt{\nu}}\big),0\Big)+u(t,x,y)
\]
with
\begin{align}\label{NSP}
\left\{
\begin{aligned}
&\partial_t u-\nu\Delta u+u\cdot\nabla u+U^P\big(\frac{y}{\sqrt{\nu}}\big)\partial_x u+\nu^{-1/2}u_2\Big(\partial_Y U^P\big(\frac{y}{\sqrt{\nu}}\big),0\Big)+\nabla p=0,\\
&\nabla\cdot u=0,\\
&u|_{\partial\Omega}=0,\quad u(0)=a.
\end{aligned}
\right.
\end{align}
The system \reff{NSP} can be written as
\begin{align}\label{NSA}
\left\{
\begin{aligned}
&\partial_t u+\mathbb{A}_\nu u=-\mathbb{P}(u\cdot\nabla u),\\
&u|_{t=0}=a,
\end{aligned}
\right.
\end{align}
where $\mathbb{P}: L^2(\Omega)^2\to L^2_\sigma(\Omega)$ is the Helmoholtz-Leray projection and
\begin{align*}
\mathbb{A}_\nu u=-\nu\Delta u+\mathbb{P}\Big(U^P\big(\frac{y}{\sqrt{\nu}}\big)\pa_xu+\nu^{-1/2}u_2\Big(\partial_Y U^P\big(\frac{y}{\sqrt{\nu}}\big),0\Big)\Big)
\end{align*}
with the domain
\[
D(\mathbb{A}_\nu)=W^{2,2}(\Omega)^2\cap W^{1,2}_0(\Omega)^2\cap L^2_\sigma(\Omega).
\]

Before stating main result,  we introduce some notations and functional spaces.
 Let
\begin{align*}
(\mathcal{P}_n f)(y)=f_n(y) e^{inx},\quad f_n(y)=\frac{1}{2\pi}\int_0^{2\pi}f(x,y)e^{-inx}dx,
\end{align*}
be the projection on the Fourier mode  $n\in \Z$ in $x$. We inroduce the following Gevrey class. For $\gamma\in(0,1], d\geq0$ and $K>0$,
\begin{align*}
&X_{d,\gamma,K}:=\big\{f\in L^2_\sigma(\Omega):\|f\|_{X_{d,\gamma,K}}=\sup_{n\in\mathbb{Z}}(1+|n|^d)e^{K|n|^\gamma}\|\mathcal{P}_nf\|_{L^2(\Omega)}<+\infty\big\},\\
&X^{(1)}_{d,\gamma,K}:=\big\{f\in L^2_\sigma(\Omega):\|f\|_{X_{d,\gamma,K}^{(1)}}=\sup_{n\in\mathbb{Z}}(1+|n|^d)e^{K|n|^\gamma}\|\mathcal{P}_nf\|_{L^2_xH^1_y(\Omega)}<+\infty\big\}.
\end{align*}
When $\gamma=1$, the functions in $X_{d,\gamma, K}$ are analytic in $x$, where $K$ corresponds to the analytic width.   

Next we introduce the {\bf strongly concave(SC) condition} on shear flows:\smallskip

\begin{enumerate}

\item $U(Y)\in BC^2(\mathbb{R}_+)$ with
\[
\|U\|:=\sum_{k=0,1,2}\sup_{Y\geq0}(1+Y)^k|\partial^k_YU(Y)|<\infty.
\]

\item $U|_{Y=0}=0,\,\,\lim_{Y\to+\infty}U(Y)=1$.

\item There exists $M>0$ such that $-M\partial_Y^2 U\geq(\partial_Y U)^2$ and $|\partial_Y^3U/\partial_Y^2 U|+|\partial_Y^2U/\partial_Y U|\leq M$ for any $Y\geq 0$.
\end{enumerate}

Now we  state our main result as follows. 

\begin{theorem}\label{main}
Assume that $U^P$ satisfies (SC) condition. For $\gamma\in[2/3,1), d>5-3\gamma$ and $K>0$, there exists $C,\epsilon, T$ and $K'\in(0,K)$ such that for any sufficiently small $\nu>0$, if $\|a\|_{X^{(1)}_{d,\gamma,K}}\leq \epsilon\nu^{\frac{1}{2}+\beta}$ with 
$\beta=\max\Big\{\f {7(1-\gamma)} {8\gamma}+\f1 {8\gamma}+,\f{3} {16}+\frac{15(1-\gamma)}{16}\Big\},$
 then the system \reff{NSP} admits a unique solution $u\in C([0,T];L^2_\sigma(\Omega))\cap L^2(0,T;W^{1,2}_0(\Omega))$ satisfying
\begin{align*}
&\sup_{0<t\leq T}\big(\|u(t)\|_{X_{d,\gamma,K'}}+\nu^{\frac{1}{4}}\|u(t)\|_{L^\infty(\Omega)}+(\nu t)^{\frac{1}{2}}\|\nabla u(t)\|_{L^2(\Omega)}\big)\leq C\|a\|_{X^{(1)}_{d,\gamma,K}}.
\end{align*}
\end{theorem}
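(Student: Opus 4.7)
The plan is to apply Duhamel's formula
$$u(t) = e^{-t\mathbb{A}_\nu} a - \int_0^t e^{-(t-s)\mathbb{A}_\nu} \mathbb{P}(u\cdot\nabla u)(s)\,ds$$
and close a bootstrap in the norm $N_T(u)$ combining the three quantities on the left-hand side of the claimed estimate. Since the coefficients of $\mathbb{A}_\nu$ are $x$-independent, Fourier decomposition in $x$ reduces the linear problem to a family of one-dimensional resolvent problems for $\mathbb{A}_{\nu,n}$ parametrized by $n\in\mathbb{Z}$. Passing to the stream function $\psi_n$ for each non-zero mode (so that $u_n=(\partial_y\psi_n,-in\psi_n)$), the equation $(\lambda+\mathbb{A}_{\nu,n})u_n=f_n$ becomes an Orr-Sommerfeld-type equation on $\mathbb{R}_+$ with no-slip data and decay at infinity; the mode $n=0$ is a harmless heat-type problem.

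The heart of the argument is to establish, for each $n$ and each $\lambda$ on an appropriate contour $\Gamma_n$, resolvent estimates of the form
$$\|(\lambda+\mathbb{A}_{\nu,n})^{-1}f_n\|_Y \leq C\,\Theta_Y(n,\lambda;\nu)\|f_n\|_X,\quad Y\in\{L^2,H^1,L^\infty\},$$
uniformly in $\nu$. In contrast to \cite{GGN, GMM}, where the Green's function is constructed by iterating a Rayleigh solver against an Airy parametrix, I would test the Orr-Sommerfeld equation against $\overline{(\partial_y^2-n^2)\psi_n}$ after multiplication by a carefully chosen weight depending on $U^P$ and $\mathrm{Im}\,\lambda$, chosen so that the Rayleigh terms near the critical layer $\{U^P(y/\sqrt\nu)\approx\mathrm{Im}\,\lambda/n\}$ carry a definite sign. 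The (SC) condition is the source of this positivity: $-M\partial_Y^2 U^P\geq(\partial_Y U^P)^2$ provides pointwise coercivity, while the bounds on $\partial_Y^3 U^P/\partial_Y^2 U^P$ and $\partial_Y^2 U^P/\partial_Y U^P$ control commutators generated by the weight. The viscous bi-Laplacian $-\nu(\partial_y^2-n^2)^2$ then absorbs the remainders provided $\mathrm{Re}\,\lambda$ is pushed sufficiently into the left half-plane, and the admissible shape of $\Gamma_n$ encodes the Airy scale $(\nu/|n|)^{1/3}$; this is the origin of the threshold $\gamma\geq 2/3$. Extracting the $L^\infty_y$ bound \emph{directly} from the resolvent representation, rather than interpolating between $L^2_y$ and $H^1_y$ a posteriori, is what will eventually remove the $t^{1/4}$ prefactor on $\|u(t)\|_{L^\infty}$ present in \cite{GMM}.

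Converting the resolvent bounds into semigroup bounds via $e^{-t\mathbb{A}_{\nu,n}}=\frac{1}{2\pi i}\int_{\Gamma_n} e^{\lambda t}(\lambda+\mathbb{A}_{\nu,n})^{-1}\,d\lambda$ yields
$$(1+|n|^d)e^{K'|n|^\gamma}\|\mathcal{P}_n e^{-t\mathbb{A}_\nu} a\|_{L^2}\leq C\|a\|_{X_{d,\gamma,K}},$$
together with the corresponding $(\nu t)^{1/2}\|\nabla\cdot\|_{L^2}$ and $\nu^{1/4}\|\cdot\|_{L^\infty}$ variants. The Gevrey loss $K-K'>0$ is precisely what allows $\Gamma_n$ to slide into $\{\mathrm{Re}\,\lambda\lesssim -c|n|^\gamma\}$. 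The reason one must demand the stronger initial norm $\|a\|_{X^{(1)}_{d,\gamma,K}}$ rather than $\|a\|_{X_{d,\gamma,K}}$ is the $L^\infty_y$ bound at small $t$: the extra $H^1_y$ on each Fourier mode provides $\|\mathcal{P}_n a\|_{L^\infty_y}$ via one-dimensional Sobolev embedding, delivering $\nu^{1/4}\|e^{-t\mathbb{A}_\nu}a\|_{L^\infty}\leq C\|a\|_{X^{(1)}_{d,\gamma,K}}$ uniformly as $t\to 0^+$.

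For the nonlinear step one estimates $\mathbb{P}(u\cdot\nabla u)$ in an appropriate source space by exploiting the algebra structure of $X_{d,\gamma,K'}$ under Fourier convolution; the condition $d>5-3\gamma$ together with the $(\nu t)^{1/2}\|\nabla u\|_{L^2}$ and $\nu^{1/4}\|u\|_{L^\infty}$ pieces of $N_T(u)$ make the nonlinearity integrable as a source for the Duhamel integral. Feeding this back through the semigroup estimates produces an inequality of the form $N_T(u)\leq C\|a\|_{X^{(1)}_{d,\gamma,K}}+C\nu^{-\eta}N_T(u)^2$, where $\eta$ is the total $\nu$-loss incurred at the bilinear stage; the explicit $\beta$ of the hypothesis is engineered so that $\nu^{1/2+\beta}\cdot\nu^{-\eta}\cdot\nu^{1/2+\beta}\lesssim\nu^{1/2+\beta}$, closing the bootstrap for $\nu$ sufficiently small. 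The principal obstacle is Step~2, namely extracting coercivity from the Orr-Sommerfeld equation via a direct multiplier in the presence of the critical layer without recourse to the Airy parametrix; once this resolvent estimate is in hand, the semigroup construction and the bootstrap calibration are in principle standard, though the book-keeping of $\nu$-exponents leading to the precise value of $\beta$ is delicate.
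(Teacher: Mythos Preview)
Your roadmap (Duhamel, Fourier mode decomposition, resolvent bounds on a contour, semigroup estimates, bootstrap) matches the paper's, and your identification of the initial $H^1_y$ regularity as the device that kills the $t^{1/4}$ blow-up of $\|u\|_{L^\infty}$ at $t=0$ is correct. But your description of the core resolvent estimate contains a genuine gap.

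You propose to obtain the Orr--Sommerfeld resolvent bound by testing against $\overline{(\partial_y^2-n^2)\psi_n}$ with a single well-chosen weight, and you explicitly say this is done ``without recourse to the Airy parametrix.'' That is not what the paper does, and there is a structural reason it is unlikely to work. The no-slip condition $\phi(0)=\partial_Y\phi(0)=0$ is precisely what obstructs a pure energy/multiplier argument: integration by parts against the vorticity produces uncontrolled boundary terms. The paper's ``direct resolvent method'' circumvents this not by a cleverer weight but by a \emph{two-step decomposition}: first solve the Orr--Sommerfeld problem with the \emph{Navier-slip} condition $\phi(0)=w(0)=0$ (where the multiplier argument you sketch does work, via the pairing with $-w/(V''-\varsigma)$ and the Rayleigh trick of Lemma~\ref{lem:GMMray1}), and then add a \emph{boundary layer corrector} $W_b$ to restore $\partial_Y\phi(0)=0$. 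This corrector is built explicitly from the scaled Airy function $Ai\big(e^{i\pi/6}|nV'(0)|^{1/3}(Y+d)\big)$; the Airy profile is still essential, and the key analytic step is the lower bound $|\partial_Y\Phi(0)|\geq C^{-1}A^{-1}$ (Lemma~\ref{lem:lowerJ}) that makes the normalization well-defined. What has been replaced relative to \cite{GGN,GMM} is the \emph{iteration} between Rayleigh and Airy solvers, not the Airy layer itself.

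Your account of the $L^\infty$ gain is also too optimistic. The paper does not extract an $L^\infty_y$ resolvent bound ``directly''; rather, it proves a \emph{weighted} $H^1$ resolvent estimate $\|\rho^{1/2}w\|_{L^2}$ with a cutoff weight $\rho$ adapted to the boundary layer thickness $(|n|^{\gamma-2/3}/\delta)^{-3/2}$, and then applies a weighted interpolation inequality (Lemma~\ref{lem:inter}) that splits $\|(1-\rho^{1/2})w\|_{L^1}$ off as a separately controlled error. This is what sharpens the $|n|$-exponent in the middle-frequency regime $|n|^\gamma\nu^{1/2}<1$ and ultimately produces the value of $\beta$; a naive $L^2$--$H^1$ interpolation would lose too many powers of $|n|$ there.
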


\begin{remark}
Let us give several remarks on our result.

\begin{enumerate}
\item In Theorem \ref{main}, we obtain the $L^\infty$ stability of Prandtl expansion in Gevrey class, i.e., 
\beno
\|u(t)\|_{L^\infty}\le C\nu^{-\f14}\|a\|_{X^{(1)}_{d,\gamma,K}}\le C\nu^{\f14+\beta}.
\eeno

\item The smallness requirement on the initial perturbation  should be not optimal. It is a very interesting problem to investigate whether  the power $\f 12+\beta$ of $\nu$ can be improved to $\f12$.

\item Our method could be used to the case when $U(Y)$ satisfies weakly concave(WC) condition: $-M_\sigma\partial_Y^2 U\geq(\partial_Y U)^2$ for $Y\ge \sigma>0$ or $U(t, Y)$ is  time dependent and weakly concave.

\item In a recent remarkable work \cite{GMM1} by Gerard-Varet, Maekawa and Masmoudi, they show that even for general concave Prandtl profiles, the Prandtl expansion is stable (including in $L^\infty$ sense) if initial perturbation is smaller than $\nu^\f94$ in Gevrey class $\f32$.  The key difference is that  for the shear flows, one can obtain the semigroup estimates global in time via the resolvent estimates, which should be important to study the instability of the Prandtl expansion in the Sobolev space.  
\end{enumerate}
\end{remark}

\section{Sketch of the proof}

The roadmap of the proof of Theorem \ref{main} is similar to \cite{GMM}.  We first  focus on the linearized system of \reff{NSP} and obtain the corresponding semigroup estimates via the resolvent estimates for the linearized Orr-Sommerfeld operator.  Then using the Duhamel formula of the solution, we prove the nonlinear stability by combining the semigroup estimates with nonlinear estimates.

\subsection{Reformulation of the problem}

As in \cite{GMM}, it is more convenient to introduce the rescaled velocity
\begin{align}\label{eq:rescale}
   & u(t,x,y)=v(\tau,X,Y),\quad (\tau,X,Y)=\Big(\frac{t}{\sqrt{\nu}},\frac{x}{\sqrt{\nu}},\frac{y}{\sqrt{\nu}}\Big).
\end{align}
If $u(t)=e^{-t\mathbb{A}_\nu}a$, then $v$ is the solution to the system
\begin{align}\label{eq:sl-ns}\left\{\begin{aligned}
&\partial_{\tau}v-\sqrt{\nu}\Delta_{X,Y} v+ \big(v_2\partial_YV,0\big) + V\partial_Xv+\nabla_{X,Y} q =0\quad \text{in}\,\,\Omega_{\nu},\\
&\text{div}_{X,Y}\ v=0,\ v|_{Y=0}=0,\ v|_{t=0}=a^{(\nu)},
\end{aligned}\right.
\end{align}
where $\Omega_\nu:=(\nu^{-1/2}\mathbb{T})\times\mathbb{R}_+$ and $a^{(\nu)}:=a(\nu^{1/2}X,\nu^{1/2}Y)$. So, $v$, $\nabla q$ are $\frac{2\pi}{\sqrt{\nu}}$-periodic in $X$.

We introduce the linearized operator 
\begin{eqnarray}\label{rescale-operator}
\mathbb{L}_\nu v=-\sqrt{\nu}\mathbb{P}_\nu\Delta v+\mathbb{P}_\nu\big(V\partial_X v+v_2(\partial_Y V,0)\big)
\end{eqnarray}
with $D(\mathbb{L}_\nu)=W^{2,2}(\Omega_\nu)\cap W^{1,2}_0(\Omega_\nu)\cap L^2_\sigma(\Omega_\nu)$, where $\mathbb{P}_\nu$
is the rescaled Helmoholtz-Leray projection. To establish the resolvent estimates of $\mathbb{L}_\nu$,  we consider the following resolvent problem for $\mu\in \mathbb{C}$:
\begin{align}\label{eq:sc-res}\left\{\begin{aligned}
&\mu v-\sqrt{\nu}\Delta_{X,Y} v + \big(v_2\partial_YV,0\big) + V\partial_Xv+\nabla_{X,Y} q =f,\quad Y\geq 0,\\
&\text{div}_{X,Y}\ v=0,\ v|_{Y=0}=0.
\end{aligned}\right.
\end{align}

 Let $w=\partial_Xv_{2}-\partial_Yv_{1}$ be the vorticity field of $v$. Direct computation gives
\begin{align*}
   &\mu w-\sqrt{\nu}\Delta w-v_2\partial_Y^2V+V\partial_xw=\partial_Xf_2-\partial_Yf_{1}.
\end{align*}
The corresponding stream function denoted by $\phi$ solves
\begin{align*}
   &\Delta\phi=w,\quad \phi|_{Y=0}=0.
\end{align*}
Then $v=(-\partial_Y\phi,\partial_X\phi)$. 

Since the shear flow is independent of $x$, it is natural to study the problem on each Fourier mode with respect to the $x$ variable. So, we introduce 
\begin{align*}
(\mathcal{P}_{\nu,n}f)(Y)=f_n(Y)e^{\mathrm{i}n\sqrt{\nu}X},\quad f_n(Y)=\frac{\sqrt{\nu}}{2\pi}\int_0^{\frac{2\pi}{\sqrt{\nu}}}f(X,Y)e^{-\mathrm{i}n\sqrt{\nu}X}\mathrm{d}X.
\end{align*}
Let $\phi_n(Y)$ be the $n\sqrt{\nu}$ fourier modes of $\phi(X,Y)$. Then  $\phi_n$ solves the following system
\begin{align}\label{eq:sc-resphi}\left\{\begin{aligned}
&\mu (\partial_Y^2-n^2\nu)\phi_n-\sqrt{\nu}(\partial_Y^2-n^2\nu)^2 \phi_n - \mathrm{i}n\sqrt{\nu}\phi_n(\partial_Y^2V) + \mathrm{i}n\sqrt{\nu}V(\partial_Y^2-n^2\nu)\phi_n\\
&\quad=\mathrm{i}n\sqrt{\nu}f_{2,n}-\partial_Yf_{1,n},\\
&\phi_n|_{Y=0}=\partial_Y\phi_n|_{Y=0}=0.
\end{aligned}\right.
\end{align}
Let
\beno
\lambda:= \dfrac{\mathrm{i}\mu}{|n|\sqrt{\nu}},\quad \alpha:=\nu^{1/2}|n|.
\eeno
 Removing the subscript $n$ for $(\phi_n, f_{i,n})$,  we obtain the Orr-Sommerfeld equation
\begin{align}\label{eq:sc-resphi1}\left\{\begin{aligned}
&-\sqrt{\nu}(\partial_Y^2-\alpha^2)^2 \phi + \mathrm{i}\alpha\big((V-\lambda)(\partial_Y^2-\alpha^2)\phi- (\partial_Y^2V)\phi\big)
=\mathrm{i}\alpha f_{2}-\partial_Yf_{1},\\
&\phi|_{Y=0}=\partial_Y\phi|_{Y=0}=0,
\end{aligned}\right.
\end{align}
Thus, the problem is reduced to solve the Orr-Sommerfeld equation.

\subsection{Resolvent estimates}

We denote by $\mathbb{L}_{\nu,n}$ the restriction of  $\mathbb{L}_\nu$ on the subspace $\mathcal{P}_{\nu, n}L^2_\sigma(\Omega_\nu)$.
The resolvent estimates for $(\mu-\mathbb{L}_{\nu,n})^{-1}$ has been reduced to solve the Orr-Sommerfeld equation.
In \cite{GMM}, Gerard-Varet,  Masmoudi  and  Maekawa solve the Orr-Sommerfeld equation by developing the Rayleigh-Airy iteration method introduced in \cite{GGN}. Motivated by our work \cite{CLWZ} on the stability of Couette flow,  we develop a direct energy method to solve 
the Orr-Sommerfeld equation for the general shear flows, which may be of independent interest,  and could be used to the relevant problems in the hydrodynamic stability. Our method are composed of the following two key steps.\smallskip

\no{\bf Step 1.} Solving the Orr-Sommerfeld equation with Navier-slip boundary condition:
\begin{align*}\left\{\begin{aligned}
&-\sqrt{\nu}(\partial_Y^2-\alpha^2)w + \mathrm{i}\alpha\big((V-\lambda)w-(\partial_Y^2V)\phi\big)=F,\\
&(\partial_Y^2-\alpha^2)\phi=w,\ w|_{Y=0}=\phi|_{Y=0}=0.
\end{aligned}\right.
\end{align*}
We first decompose the solution of $w$ as $w=w_1+w_2$, where 
\begin{align*}\left\{
\begin{aligned}
&-\sqrt{\nu}(\partial_Y^2-\alpha^2)w_1+\mathrm{i}\alpha \big(\partial_Y\big((V-\lambda)\phi'_1\big)-(V-\lambda)\alpha^2\phi_1-V''\phi_1\big) =F,\\
&(\partial_Y^2-\alpha^2)\phi_1=w_1,\quad w_1|_{Y=0}=\phi_1|_{Y=0}=0,
\end{aligned}\right.\end{align*}
and
\begin{align*}\left\{
\begin{aligned}
&-\sqrt{\nu}(\partial_Y^2-\alpha^2)w_2+\mathrm{i}\alpha\big((V-\lambda)w_2 -V''\phi_2\big) =V'h,\\
&(\partial_Y^2-\alpha^2)\phi_2=w_2,\quad w_2|_{Y=0}=\phi_2|_{Y=0}=0,
\end{aligned}\right.\end{align*}
with $h=\mathrm{i}\alpha\partial_Y\phi_1$. 

Thanks to good boundary condition and good structure on $w_1$, the estimates for $w_1$ is direct by making a inner $L^2$ product with $\phi_1$ to the equation of $w_1$.  The estimates of $w_2$ are based on two tricks. 
First of all, by making $L^2$ inner product estimate with $w/(V''-\varsigma)$($\varsigma$ small positive constant) to the equation of $w_2$, 
we can prove the following weighted estimate on $w_2$:
\begin{align*}
     & \sqrt{\nu} \left\|\dfrac{(\partial_Yw_2,\alpha w_2)}{|V''-\varsigma|^{\f12}}\right\|_{L^2}^2 +\alpha\lambda_i \left\|\dfrac{w_2}{|V''-\varsigma|^{\f12}}\right\|_{L^2}^2\leq C(\alpha\lambda_i)^{-1}\|h\|_{L^2}^2 +C\varsigma(\alpha\lambda_i)^{-1}\|\alpha\phi_2\|_{L^2}^2.
  \end{align*}
 Secondly, to estimate the stream function $\phi_2$, we use the Rayleigh's trick. To this end, we consider the following inhomogeneous Rayleigh equation
 \beno
\textsl{R}\phi:= (V-\lambda)(\partial_Y^2-\alpha^2)\phi-V''\phi=V'h_1+\partial_Yh_2+\mathrm{i}\alpha h_3,\quad \phi(0)=0. 
\eeno
Using Rayleigh's trick(Lemma \ref{lem:GMMray1}), we can show that 
  \begin{align*}
     & \|(\partial_Y\phi,\alpha\phi)\|_{L^2}\leq C\big(\lambda_i^{-1}\|h_1\|_{L^2} +\lambda_i^{-2}\|(h_2,h_3)\|_{L^2}\big).
  \end{align*}
Then we rewrite the equation of $w_2$ as
\begin{align*}
   &\mathrm{i}\alpha(\textsl{R}\phi_2) =\sqrt{\nu}(\partial_Y^2-\alpha^2)w_2+V'h.
\end{align*}
Applying the above estimate with $h_1=h/(\mathrm{i}\alpha),\ h_2=\sqrt{\nu}\partial_Yw_2/(\mathrm{i}\alpha),\ h_3=\sqrt{\nu} w_2$, we get
\begin{align*}
   \|(\partial_Y\phi_2,\alpha\phi_2)\|_{L^2}  \leq &C(\alpha\lambda_i)^{-1}\|h\|_{L^2}+ C\nu^{\f12}\lambda_i^{-2}\alpha^{-1}\left\|\dfrac{(\partial_Yw_2,\alpha w_2)}{|V''-\varsigma|^{\f12}}\right\|_{L^2},
\end{align*}
which along with the weighted estimate on $w_2$ will show that 
\begin{align*}
  & \alpha\lambda_i\|(\partial_Y\phi_2,\alpha\phi_2)\|_{L^2}\leq C\|h\|_{L^2},\\
&\nu^{\f14}(\alpha\lambda_i)^{\f12}\left\|(\partial_Yw_2,\alpha w_2)/V'\right\|_{L^2}+\alpha\lambda_i\left\|w_2/V'\right\|_{L^2}\leq C\|h\|_{L^2}.
\end{align*}

\no{\bf Step 2.} Boundary layer corrector.\smallskip

To match nonslip boundary condition, we need to introduce the boundary layer corrector:\begin{align}\label{eq:Hombound-OSnon}
\left\{\begin{aligned}\nonumber
&-\sqrt{\nu}(\partial_Y^2-\alpha^2)W_b+\mathrm{i}\alpha\big((V-\lambda)W_b- V''\Phi_b\big)=0,\\
&(\partial_Y^2-\alpha^2)\Phi_b=W_b,\\
&\Phi_b|_{Y=0}=0,\ \partial_Y\Phi_b|_{Y=0}=1.
\end{aligned}\right.
\end{align}
It is not easy to solve the above system directly. Instead,  we solve the following system
\begin{align*}
\left\{\begin{aligned}
&-\sqrt{\nu}(\partial_Y^2-\alpha^2)W+\mathrm{i}\alpha\big((V-\lambda)W- V''\Phi\big)=0,\\
&(\partial_Y^2-\alpha^2)\Phi=W,\\
&\Phi|_{Y=0}=0,\ \partial_Y^2\Phi|_{Y=0}=1.
\end{aligned}\right.
\end{align*}
The advantage is that  the solution $W$ of this system  can be well approximated by the following scaled Airy function: 
  \begin{align*}
     & W_a (Y)= Ai\big(\mathrm{e}^{\mathrm{i}\frac{\pi}{6}}|nV'(0)|^{\f13}(Y+d)\big)/ Ai\big(\mathrm{e}^{\mathrm{i}\frac{\pi}{6}}|nV'(0)|^{\f13}d\big),
  \end{align*}
where  $d=-\lambda_\nu/V'(0)$ with $\lambda_\nu=\lambda+\mathrm{i}\sqrt{\nu}\alpha$. 
Further, we observe that  $\pa_Y\Phi_a(0)\neq 0$.

Next we need to solve the error  equation $W_e=W-W_a$, which is given by 
 \begin{align}\nonumber\left\{\begin{aligned}
     &-\sqrt{\nu}(\partial_Y^2-\alpha^2)W_e+\mathrm{i}\alpha\big((V-\lambda)W_e-V''\Phi_e\big) =-\mathrm{i}\alpha\big(V-V'(0)Y\big)W_a+\mathrm{i}\alpha V''\Phi_a,\\
     &(\partial_Y^2-\alpha^2)\Phi_e=W_e,\quad \Phi_e(0)=W_e(0)=0.
     \end{aligned}\right.
  \end{align}
 Since $W_e$ satisfies the Navier-slip boundary condition, we can use the resolvent estimates established in step 1 to obtain various 
 estimates of $W_e$. In particular,  we have $\pa_Y\Phi(0)\neq 0$. Thus, $W_b=W(Y)/\pa_Y\Phi(0)$ is well-defined.
 
 Finally, the solution $w$ of the Orr-Sommerfeld equation with nonslip boundary condition is given by
 \beno
 w(Y)=w_{Na}(Y)-\pa_Y\Phi_{Na}W_b(Y),
 \eeno
where $(w_{Na}, \phi_{Na})$ is the solution of the Orr-Sommerfeld equation with Navier-slip boundary condition.\smallskip

Let us refer to section 3 for more details.

 \subsection{Semigroup estimates}
 We follow similar ideas in \cite{GMM} from the resolvent estimates to the $L^2-L^2$ and $L^2-H^1$ semigroup estimates, which is based on the formula 
 \begin{align}\nonumber
 e^{-\tau\mathbb{L}_{\nu,n}}=\frac{1}{2\pi\mathrm{i}}\int_{\Gamma}e^{\tau\mu}(\mu+\mathbb{L}_{\nu,n})^{-1}\mathrm{d}\mu
\end{align}
with a suitable contour $\Gamma$ which lies in the resolvent set of $\mathbb{L}_{\nu,n}$. 

To obtain $L^\infty$ stability result, 
we need to establish two kinds of new semigroup estimates: (1) $L^2-L^\infty$ semigroup estimates, which will be used to control the inhomogeneous part of the solution of nonlinear system; (2) $H^1-L^\infty$ semigroup estimates, which will be used to control the homogeneous part of the solution.  Let us point out that our $L^\infty$ semigroup estimates  are not  a simple consequence of  the interpolation between $L^2$ estimate and $H^1$ estimate. New weighted $H^1$ resolvent estimate established in section 3 plays an important role.  \smallskip

Let us refer to section 4 for more details.
\subsection{Nonlinear stability} 

We denote by $\mathbb{A}_{\nu,n}$ the restriction of $\mathbb{A}_\nu$ on the subspace $\mathcal{P}_nL^2_\sigma(\Omega)$.
 By the Duhamel formula, the solution $u(t)$ of \reff{NSA} could be written into the following integral form with respect to each Fourier mode $n$:
\begin{align}
\mathcal{P}_n u(t)=e^{-t\mathbb{A}_{\nu,n}}\mathcal{P}_n a-\int_0^te^{-(t-s)\mathbb{A}_{\nu,n}}\mathcal{P}_n\mathbb{P}(u\cdot\nabla u)(s)ds.
\end{align}
We introduce the functional space of Gevrey type:
\begin{eqnarray*}
\begin{split}
Z_{\gamma,K,T}=&\Big\{f\in(C([0,T];L^2_\sigma(\Omega)):\|f\|_{Z_{\gamma,K,T}}:=\sup_{0<t\leq T}\big(\|f(t)\|_{X_{q,\gamma,K(t)}}\\
&\quad+\nu^{\f14}\|f(t)\|_{Y_{q,\gamma,K(t)}}+(\nu t)^{\f12}\|\nabla f(t)\|_{X_{q,\gamma,K(t)}}\big)<+\infty\Big\},
\end{split}
\end{eqnarray*}
where $K(t)=K-2\delta^{-1}t$ and 
\begin{align*}
Y_{d,\gamma,K(t)}=\Big\{f\in L^2_\sigma(\Omega): \|f\|_{Y_{d,\gamma,K(t)}}=\sup_{n\in\mathbb{Z}}(1+|n|^d)e^{K(t)\an^{\gamma}}\|\mathcal{P}_n f\|_{L^2_xL^\infty_y(\Omega)}<+\infty\Big\}.
\end{align*}
Based on the semigroup estimates of $e^{-(t-s)\mathbb{A}_{\nu,n}}$ and nonlinear estimates, we can show that 
\begin{align*}
\|u\|_{Z_{\gamma,K,T}}\leq C\Big(\|a\|_{X^{(1)}_{d,\gamma,K}}+C\nu^{-\frac{1}{2}-\beta}\|u\|^2_{Z_{\gamma,K,T}}\Big).
\end{align*}
This implies our result.

Let us refer to section 5 for more details.

%%%%%%%%%%%%%%
\section{Resolvent Estimates  in Middle Frequency}

For low frequency $|n|\le O(1)$ and high frequency $|n|\ge O(\nu^{-\f34})$,  the semigroup estimates of $e^{-t\mathbb{L}_{\nu,n}}$ can be proved  by using simple energy method. For the middle range of frequency $\mathcal{O}(1)\leq |n|\leq \mathcal{O}(\nu^{-3/4})$, we need to derive the semigroup estimates via the resolvent estimates of $\mathbb{L}_{\nu,n}$.

\subsection{Key resolvent estimates}
We only consider the case of  middle frequency:
\[
\delta^{-1}_0\leq |n|\leq \delta_0^{-1}\nu^{-3/4},\quad\text{where}\quad \delta_0=\frac{1}{2(1+\|V\|)}.
\]
Recall that $\lambda= \dfrac{\mathrm{i}\mu}{|n|\sqrt{\nu}}$ and $\alpha=\nu^{1/2}|n|$. Therefore,  $\delta^{-1}_0\nu^{1/2}\leq\alpha\leq\delta^{-1}_0\nu^{-1/4}$. Moreover, we consider the case
\begin{equation}\label{Remu}
\mathbf{Re}\mu=\alpha\mathbf{Im}\lambda\geq \frac{\nu^{\f12} |n|^\gamma}{\delta}
\end{equation}
for some $\gamma\in[0,1]$ and for sufficiently small but fixed positive  number $\delta$. We denote  $\lambda_r=\mathbf{Re}\lambda$, $\lambda_i=\mathbf{Im}\lambda$. We introduce the weight function $\rho(Y)$ defined by 
\begin{align}\label{rho-lambda}
     \rho(Y)=\left\{\begin{aligned}
     &\big(|n|^{\gamma-\f23}/\delta\big)^{\f32} Y,\qquad \text{if}\,\,\, 0\leq Y\leq \big(|n|^{\gamma-\f23}/\delta\big)^{-\f32},\\
     &1,\qquad\qquad\quad \text{if}\,\,\,Y\geq \big(|n|^{\gamma-\f23}/\delta\big)^{-\f32}.
     \end{aligned}\right.
  \end{align}

Our key resolvent estimates are stated as follows.

\begin{theorem}\label{main-resolvent}
Assume that $(SC)$ condition holds and $\delta_0^{-1}\leq |n|\leq \delta_0^{-1}\nu^{-\f34}$, and \reff{Remu} holds for some $\delta\in(0,\delta_*]$. Then there exist $\delta_1,\delta_2,\delta_*\in(0,1)$ satisfying $\delta_1,\delta_2\leq \delta_0$ and $\delta_*\leq \min\{\delta_1,\delta_2\}$ such that the following statements hold true. 
\begin{enumerate}
\item Let $n\in\mathbb{Z}$ and $\gamma\in[0,1]$. Then there exists $\theta\in(\frac{\pi}{2},\pi)$ such that the set
\begin{align}\label{S-mun}
S_{\nu,n}(\theta)=\big\{\mu\in\mathbb{C}||\mathbf{Im}\mu|\geq (\tan\theta)\mathbf{Re}\mu+\delta_1^{-1}(\nu^{\f12}|n|+|\tan\theta||n|^\gamma\nu^{\f12}),\, |\mu|\geq \delta^{-1}_1\nu^{\f12}|n|\big\}
\end{align}
is in the resolvent set of $-\mathbb{L}_{\nu,n}$ and
\begin{align}\label{mularge}
&\|(\mu+\mathbb{L}_{\nu,n})^{-1}f\|_{L^2(\Omega_{\nu})}\leq\frac{C}{|\mu|}\|f\|_{L^2(\Omega_\nu)},\\
\label{mularge-nabla}
&\|\nabla(\mu+\mathbb{L}_{\nu,n})^{-1}f\|_{L^2(\Omega_{\nu})}\leq\frac{C}{\nu^{\f14}|\mu|^{\f12}}\|f\|_{L^2(\Omega_\nu)}
\end{align}
for all $\mu\in S_{\nu,n}(\theta)$ and $f\in\mathcal{P}_{\nu,n}L^2_\sigma(\Omega_\nu)$.
\item If $|n|\geq \delta_0^{-1}$ and  $\mathbf{Re}\mu+n^2\nu^{\f32}\geq\delta^{-1}_2$, then $\mu$ belongs to the resolvent set of $-\mathbb{L}_{\nu,n}$ and the following estimates hold: for all $f\in\mathcal{P}_{\nu,n}L^2_\sigma(\Omega_\nu)$
\begin{align}\label{Immularge}
&\|(\mu+\mathbb{L}_{\nu,n})^{-1}f\|_{L^2(\Omega_{\nu})}\leq\frac{C}{\mathbf{Re}\mu}\|f\|_{L^2(\Omega_\nu)},\\
\label{Immularge-na}
&\|\nabla(\mu+\mathbb{L}_{\nu,n})^{-1}f\|_{L^2(\Omega_{\nu})}\leq\frac{C}{\nu^{\f14}(\mathbf{Re}\mu)^{\f12}}\|f\|_{L^2(\Omega_\nu)}.
\end{align}
\item Let $\gamma\in[\f23,1]$. Then the set
\begin{align}
O_{\nu,n}:=\Big\{\mu\in\mathbb{C}||\mu|\leq \delta^{-1}_1|n|\nu^{\f12},\quad\mathbf{Re}\mu\geq\frac{|n|^\gamma\nu^{\f12}}{\delta}\Big\}
\end{align}
is included in the resolvent set of $-\mathbb{L}_{\nu,n}$. Moreover, if $\mu\in O_{\nu,n}$ satisfies $\mathbf{Re}\mu=\frac{|n|^\gamma\nu^{\f12}}{\delta}$ and $\mathbf{Re}\mu+n^2\nu^{\f32}\leq\delta_2^{-1}$, then
\begin{align}\label{musmall}
&\|(\mu+\mathbb{L}_{\nu,n})^{-1}f\|_{L^2(\Omega_{\nu})}\leq\frac{Cn^{1-\gamma}}{\mathbf{Re}\mu}\|f\|_{L^2(\Omega_\nu)},\\
&\|\nabla(\mu+\mathbb{L}_{\nu,n})^{-1}f\|_{L^2(\Omega_{\nu})}\leq\frac{C|n|^{\f12+\f14(1-\gamma)})}{\mathbf{Re}\mu}\|f\|_{L^2(\Omega_\nu)},\\
\label{musmall-wegihted}
&\|\rho^{\f12}(\mathrm{curl}(\mu+\mathbb{L}_{\nu,n})^{-1}f)\|_{L^2(\Omega_{\nu})}\leq \frac{C}{\nu^{\f14}(\mathbf{Re}\mu)^{\f12}}\|f\|_{L^2(\Omega_{\nu})}.
\end{align}

\end{enumerate}
\end{theorem}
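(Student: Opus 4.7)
The plan is to reduce, mode by mode, the resolvent equation $(\mu+\mathbb{L}_{\nu,n})v=f$ to the Orr--Sommerfeld equation \eqref{eq:sc-resphi1} for the stream function $\phi$, and to prove the three groups of bounds by solving this equation in the appropriate parameter regimes. Recall $\alpha=\sqrt{\nu}|n|$, $\lambda=\mathrm{i}\mu/(|n|\sqrt{\nu})$, and that the regime assumption \eqref{Remu} forces $\alpha\lambda_i\gtrsim |n|^\gamma\nu^{1/2}/\delta$. I would split $\phi=\phi_{Na}+c\,\Phi_b$, where $\phi_{Na}$ solves \eqref{eq:sc-resphi1} with the Navier-slip condition $w_{Na}|_{Y=0}=\phi_{Na}|_{Y=0}=0$, the corrector $\Phi_b$ is the (renormalized) boundary-layer profile from Step~2 of the sketch, and $c$ is chosen to enforce $\partial_Y\phi|_{Y=0}=0$. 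A key qualitative output of the construction must be that $\partial_Y\Phi(0)\neq 0$ so this renormalization is legal, with a quantitative lower bound that controls the blow-up of $c$.

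For the Navier-slip part I would follow the $w_{Na}=w_1+w_2$ decomposition. The $w_1$-estimate is a direct $L^2$ energy estimate obtained by pairing the $w_1$-equation with $\phi_1$: the dissipation $-\sqrt{\nu}(\partial_Y^2-\alpha^2)w_1$ produces the good term $\sqrt{\nu}\|(\partial_Y w_1,\alpha w_1)\|_{L^2}^2$, while the transport yields $\alpha\lambda_i\|(\partial_Y\phi_1,\alpha\phi_1)\|_{L^2}^2$ with the correct sign from the concavity $V''<0$ in (SC), the $V''\phi_1$ contribution being absorbed through $-MV''\geq (V')^2$. For $w_2$, the weighted $L^2$ pairing against $w_2/(V''-\varsigma)$ yields the weighted estimate stated in the sketch, and Rayleigh's trick applied to the rewritten equation $\mathrm{i}\alpha R\phi_2=\sqrt{\nu}(\partial_Y^2-\alpha^2)w_2+V'h$ closes the estimate for $\phi_2$. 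Choosing $h=\mathrm{i}\alpha\partial_Y\phi_1$ and summing delivers $L^2$ and $H^1$ bounds for $(\phi_{Na},w_{Na})$ matching the scalings required in all three parts of the theorem.

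For the corrector, I would approximate $W$ by the scaled Airy profile $W_a$ built from the local Airy equation $-\sqrt{\nu}\partial_Y^2 W+\mathrm{i}\alpha V'(0)(Y+d)W\approx 0$ with $d=-\lambda_\nu/V'(0)$, then control the error $W_e=W-W_a$ by applying the Navier-slip estimates from Step~1 to the system it satisfies, whose right-hand side is small because $V(Y)-V'(0)Y=O(Y^2)$ is killed by the Airy decay away from the critical layer. Classical Airy asymptotics give a quantitative lower bound on $\partial_Y\Phi_a(0)$ in terms of $(\alpha|V'(0)|)^{-1/3}$ and the ratio $\mathrm{Ai}'/\mathrm{Ai}$ at the rescaled argument, while $\partial_Y\Phi_e(0)$ is subdominant, so $\Phi_b:=\Phi/\partial_Y\Phi(0)$ is well-defined with controlled size. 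Pasting $\phi=\phi_{Na}+c\Phi_b$ and returning to $v=(-\partial_Y\phi,\partial_X\phi)$ produces $(\mu+\mathbb{L}_{\nu,n})^{-1}f$ together with its first derivatives.

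The three parts of the theorem are obtained by quantifying these bounds in the relevant ranges of $(\mu,n)$: part~(1), with $|\mu|\geq \delta_1^{-1}\nu^{1/2}|n|$ in the sector $S_{\nu,n}(\theta)$, sits in a regime where the $\mu$-term itself is coercive and a simple energy estimate yields \eqref{mularge}--\eqref{mularge-nabla}; part~(2), with $\mathbf{Re}\mu+n^2\nu^{3/2}\geq\delta_2^{-1}$, is analogous with $\mathbf{Re}\mu$ (or the full dissipation $\sqrt{\nu}\alpha^2$) playing the coercive role, giving \eqref{Immularge}--\eqref{Immularge-na}; part~(3) is the low-$\mu$ regime requiring the full Navier-slip plus Airy construction above. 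The main obstacle will be the weighted gradient bound \eqref{musmall-wegihted} in part~(3). Away from the critical layer $Y\gtrsim (|n|^{\gamma-2/3}/\delta)^{-3/2}$ one has $\rho\equiv 1$ and the estimate reduces to an analogue of \eqref{mularge-nabla}; inside the critical layer $\rho$ vanishes linearly and the curl concentrates in the Airy boundary layer, with an $L^2$ mass that must be exactly compensated by the weight. Tracking this compensation simultaneously through the weighted $w_2$-bound, the Airy asymptotics of the corrector, and the error estimate for $W_e$ is where the bookkeeping becomes heaviest, and where the exponents in \eqref{musmall}--\eqref{musmall-wegihted} (and the restriction $\gamma\in[2/3,1]$) are actually produced.
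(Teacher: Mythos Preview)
Your proposal follows the paper's approach closely and is essentially correct for parts (2) and (3). Two points deserve correction.

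First, for part (1) the sector $S_{\nu,n}(\theta)$ contains points with $\mathbf{Re}\,\mu<0$ (since $\theta>\pi/2$ means $\tan\theta<0$), and there the direct energy estimate you invoke does not apply, because that estimate needs $\alpha\lambda_i=\mathbf{Re}\,\mu>0$. The paper first proves the bound $\|(\mu+\mathbb{L}_{\nu,n})^{-1}\|\leq C/|\mu|$ only on the set $E_{\nu,n}=\{\mathbf{Re}\,\mu\geq\delta_1^{-1}|n|^\gamma\nu^{1/2},\ |\mu|\geq\delta_1^{-1}\alpha\}$ via Proposition~\ref{prop-lambda-large}, and then extends to the full sector by a Neumann-series argument: each $\mu\in E_{\nu,n}$ carries a resolvent ball $B_{|\mu|/(2C)}(\mu)$ inside the resolvent set, and choosing $\theta=\pi/2+\theta_0$ with $\theta_0=1/(2C)$ makes $S_{\nu,n}(\theta)$ covered by these balls. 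You should add this step.

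Second, your description of the $w_1$ estimate misattributes the role of concavity. When you pair the $w_1$-equation with $-\bar\phi_1$ and take the real part, both the $V''|\phi_1|^2$ term and the $(V-\lambda_r)(|\partial_Y\phi_1|^2+|\alpha\phi_1|^2)$ term are multiplied by $\mathrm{i}\alpha$ and are therefore purely imaginary; they vanish outright, with no appeal to $V''<0$. The (SC) condition enters only later: in the weighted $w_2$ estimate through $|V'''/V''|\leq M$ and $-MV''\geq (V')^2$, and in Rayleigh's trick (Lemma~\ref{lem:GMMray} and Lemma~\ref{lem:GMMray1}) for the $\phi_2$ bound.
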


\begin{proof}
We point out that we can deduce \reff{Immularge}-\reff{musmall-wegihted} directly from Proposition \ref{prop-Immu-large} and \ref{Pro:resdrsmall} respectively. Hence, we only prove the first statement of Proposition \ref{main-resolvent}. Let $\delta_1\leq \delta_0$ be a small constant in Proposition \ref{prop-lambda-large}.
Since $\mu=-\mathrm{i}\sqrt{\nu}|n|\lambda$, we notice that $\lambda$ satisfies the condition of Proposition \ref{prop-lambda-large}, if $|\mu|\geq \delta^{-1}_1\alpha$ and $\mathbf{Re}\mu=\alpha\mathbf{Im}\lambda\geq \delta_1^{-1}|n|^\gamma\nu^{\f12}$. We obtain that such $\mu$ belongs to the resolvent set of $-\mathbb{L}_{\nu,n}$ in $\mathcal{P}_{\nu,n}L^2_\sigma(\Omega_\nu)$. Moreover, we have for such $\mu$, 
\begin{align*}
\|(\mu+\mathbb{L}_{\nu,n})^{-1}f\|_{L^2(\Omega_\nu)}\leq \frac{C}{|\mu|}\|f\|_{L^2(\Omega_{\nu})},
\end{align*}
which implies that the ball $B_{r_\mu}(\mu):=\{\eta\in\mathbb{C}||\eta-\mu|\leq r_\mu\}$ with $r_\mu=\frac{|\mu|}{2C}$ belongs to the resolvent set of $-\mathbb{L}_{\nu,n}$ and for any $\eta\in B_{r_\mu}(\mu)$,
\begin{align*}
\|(\eta+\mathbb{L}_{\nu,n})^{-1}f\|_{L^2(\Omega_\nu)}\leq\frac{C}{|\mu|}\|f\|_{L^2(\Omega_\nu)}.
\end{align*}
Hence, by taking $\theta=\frac{\pi}{2}+\theta_0$ with $\theta_0=\frac{1}{2C}$, we obtain
\begin{align*}
S_{\nu,n}(\theta)\subset\cup_{\mu\in E_{\nu,n}} B_{r_\mu}(\mu)\subset\rho(-\mathbb{L}_{\nu,n}),
\end{align*}
where
\begin{align*}
E_{\nu,n}=\big\{\mu\in\mathbb{C}|\mathbf{Re}\mu\geq \delta_1^{-1}|n|^\gamma\nu^{\f12}, |\mu|\geq\delta_1^{-1}\alpha\big\}.
\end{align*}
Hence, we complete the proof of \reff{S-mun} and \reff{mularge}. Similarly, we can obtain \reff{mularge-nabla}.
\end{proof}

In the sequel, we always assume $\delta_0^{-1}\leq |n|\leq \delta_{0}^{-1}\nu^{-\f34}$, that is $\delta_0^{-1}\nu^{\f12}\leq |\alpha|\leq \delta_{0}^{-1}\nu^{-\f14}$, and we assume that $n>0$ for convenience.

\subsection{Resolvent estimates when $\lambda$ is far away from origin}
In this part, we deal with the case when $|\lambda|$ is large or $\mathbf{Im}\lambda$ is large. We first notice that \reff{eq:sc-resphi1} can be written as
\begin{align}\label{eq-modify}\left\{\begin{aligned}
&-\sqrt{\nu}(\partial_Y^2-\alpha^2)\partial_Y^2 \phi + \mathrm{i}\alpha\big((V-\lambda_\nu)(\partial_Y^2-\alpha^2)\phi- (\partial_Y^2V)\phi\big)
=\mathrm{i}\alpha f_{2}-\partial_Yf_{1},\\
&\phi|_{Y=0}=\partial_Y\phi|_{Y=0}=0,
\end{aligned}\right.
\end{align}
where $\lambda_\nu=\lambda+i\sqrt{\nu}\alpha$. \smallskip

The following proposition shows the resolvent estimates when $|\lambda|$ is large.

\begin{proposition}\label{prop-lambda-large}
There exists $\delta_1\in(0,\delta_0]$ such that the following statements hold. Let $|\lambda|\geq \delta_1^{-1}$ and $n\in\mathbb{N}$. Suppose that \reff{Remu} holds for some $\gamma\in[0,1]$ and $\delta\in(0,\delta_1]$. Then for any $f=(f_1,f_2)\in L^2(\mathbb{R}_+)^2$, the weak solution $\phi\in H^2_0(\mathbb{R}_+)$ to \reff{eq:sc-resphi1} satisfies
\begin{align}
&\|(\partial_Y\phi,\alpha\phi)\|_{L^2}\leq \frac{C}{|\alpha\lambda|}\|f\|_{L^2},\label{lambda-large-L2}\\
&\|(\partial_Y^2-\alpha^2)\phi\|_{L^2}\leq \frac{C}{\nu^{1/4}|\alpha\lambda|^{1/2}}\|f\|_{L^2},\label{lambda-large-Linfinity}
\end{align}
where $C$ is a constant only depending on $\|V\|$.
\end{proposition}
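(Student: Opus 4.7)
I would perform a direct energy estimate on \eqref{eq-modify}. Setting $w = (\partial_Y^2-\alpha^2)\phi$, the equation reads $-\sqrt{\nu}\,\partial_Y^2 w + \mathrm{i}\alpha(V-\lambda_\nu)w - \mathrm{i}\alpha V''\phi = F$ with $F := \mathrm{i}\alpha f_2 - \partial_Y f_1$ and Dirichlet-type conditions $\phi(0) = \partial_Y\phi(0) = 0$. Writing $\mathcal{E} := \|\partial_Y\phi\|_{L^2}^2 + \alpha^2\|\phi\|_{L^2}^2$, the target \eqref{lambda-large-L2} amounts to $\mathcal{E}^{1/2}\le C\|f\|_{L^2}/(\alpha|\lambda|)$.

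\textbf{Energy identity.} Testing against $\bar\phi$ over $\mathbb{R}_+$, two integrations by parts are clean thanks to $\phi(0)=\partial_Y\phi(0)=0$; combined with $\int w\bar\phi\,\mathrm{d}Y = -\mathcal{E}$ and $\int(V-\lambda_\nu)w\bar\phi\,\mathrm{d}Y = \int V w\bar\phi\,\mathrm{d}Y + \lambda_\nu\mathcal{E}$, this yields
\begin{equation*}
-\sqrt{\nu}\|w\|^2 + \sqrt{\nu}\alpha^2\mathcal{E} + \mathrm{i}\alpha\!\int\! V w\bar\phi\,\mathrm{d}Y + \mathrm{i}\alpha\lambda_\nu\mathcal{E} - \mathrm{i}\alpha\!\int\! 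V''|\phi|^2\,\mathrm{d}Y = \int F\bar\phi\,\mathrm{d}Y.
\end{equation*}
After one further integration by parts against $f_1$, the source contribution satisfies $|\int F\bar\phi| \le \|f\|_{L^2}\mathcal{E}^{1/2}$.

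\textbf{Real and imaginary parts.} Using $V(0)=0$ to perform the additional IBP identities $\int V\operatorname{Im}(w\bar\phi) = -\int V'\operatorname{Im}(\bar\phi\partial_Y\phi)$ and $\int V\operatorname{Re}(w\bar\phi) = \tfrac12\int V''|\phi|^2 - \int V(|\partial_Y\phi|^2+\alpha^2|\phi|^2)$, and canceling the $\sqrt{\nu}\alpha^2\mathcal{E}$ contributions, the real and imaginary parts of the identity give respectively
\begin{equation*}
\sqrt{\nu}\|w\|^2 + \alpha\lambda_i\mathcal{E} \le \|f\|\mathcal{E}^{1/2} + C\alpha\|V\|\mathcal{E}, \qquad |\alpha\lambda_r|\mathcal{E} \le \|f\|\mathcal{E}^{1/2} + C\alpha\|V\|\mathcal{E}.
\end{equation*}
The decisive fact is that every $V$-integral is controlled by $C\alpha\|V\|\mathcal{E}$; this uses both the decay $|V^{(k)}(Y)|\le \|V\|/(1+Y)^k$ from (SC)(1) and Hardy's inequality $\|\phi/(1+Y)\|_{L^2}\le 2\|\partial_Y\phi\|_{L^2}$, available because $\phi(0)=0$. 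Choosing $\delta_1\in(0,\delta_0]$ with $\delta_1^{-1}\ge 4C\|V\|$, the hypothesis $|\lambda|\ge\delta_1^{-1}$ forces $\max(\lambda_i,|\lambda_r|)\ge |\lambda|/\sqrt{2}\ge 2C\|V\|$; in the corresponding subcase one absorbs $C\alpha\|V\|\mathcal{E}$ into half of the leading $\alpha|\lambda|\mathcal{E}$, giving $\mathcal{E}^{1/2}\le C\|f\|/(\alpha|\lambda|)$, which is \eqref{lambda-large-L2}. Substituting back into the real-part inequality yields $\sqrt{\nu}\|w\|^2\le C\|f\|^2/(\alpha|\lambda|)$, i.e., \eqref{lambda-large-Linfinity}.

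\textbf{Main obstacle.} The crux is ensuring every $V$-dependent term carries the factor $\alpha$. Without invoking the Hardy inequality together with the decay rates from (SC)(1) (which rely on $\phi(0)=0$ and $V(0)=0$), the imaginary-part bound would contain a $\|V\|\mathcal{E}$ contribution without the prefactor $\alpha$; such a term cannot be absorbed into $\alpha|\lambda|\mathcal{E}$ uniformly across the middle-frequency range $\alpha\in[\delta_0^{-1}\sqrt{\nu},\,\delta_0^{-1}\nu^{-1/4}]$ without forcing $\delta_1$ to depend on $\nu$.
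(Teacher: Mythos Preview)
Your argument is correct and takes a genuinely different route from the paper for \eqref{lambda-large-L2}. The paper multiplies \eqref{eq-modify} by the weighted test function $(V-\lambda_\nu)^{-1}\bar\phi$, which directly produces the factor $|\lambda|^{-1}$ via $|V-\lambda_\nu|\sim|\lambda|$, at the cost of several commutator-type terms coming from derivatives of $(V-\lambda_\nu)^{-1}$ (these are then controlled using $n\,\mathbf{Im}\,\lambda_\nu\gg 1$). You instead test with the unweighted $\bar\phi$, split into real and imaginary parts, and exploit that $\max(\lambda_i,|\lambda_r|)\ge|\lambda|/\sqrt{2}$ to absorb the $C\alpha\|V\|\mathcal{E}$ terms in whichever inequality carries the large coefficient. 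This is more elementary: no weighted multiplier, no need for the largeness of $n\,\mathbf{Im}\,\lambda_\nu$, and fewer terms to estimate. The price is a harmless case split on whether $\lambda_i$ or $|\lambda_r|$ dominates. For \eqref{lambda-large-Linfinity} both arguments coincide---the paper also tests with $\bar\phi$ and feeds in the already-established bound \eqref{lambda-large-L2}, exactly as you do. Your observation that Hardy's inequality and the decay $|V^{(k)}|\le\|V\|(1+Y)^{-k}$ are essential to keep the factor $\alpha$ on every $V$-term is precisely the mechanism at work in the paper's estimates \eqref{proof-mularge-V} as well.
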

\begin{proof}
Let $f=(f_1,f_2)\in L^2(\mathbb{R}_+)^2$ and $\phi$ be the unique weak solution to \reff{eq:sc-resphi1}. Then by the previous argument, $\phi$ also satisfies \reff{eq-modify}. Assume that $|\lambda|\geq\delta_1^{-1}$ with $\delta_1:=(32(1+\|U\|))^{-1}<\delta_0$. Since $|\mu|=\alpha|\lambda|$ and $\mathbf{Im}\lambda_\nu=\mathbf{Im}\lambda+\nu n\geq \mathbf{Im}\lambda>0$, we have $|\lambda_\nu|\geq|\lambda|\ge\delta_1^{-1}$, which implies
\begin{align}
\frac{|\lambda|}{2}\leq |V-\lambda_\nu|\leq 2|\lambda|.\label{eq:V-lam-est}
\end{align}
Moreover, we have
\ben
n\mathbf{Im}\lambda_\nu\ge n\mathbf{Im}\lambda\ge \delta^{-1}n^\gamma\gg 1.\label{eq:imlam-est}
\een
Multiplying both sides of the first equation of \reff{eq-modify} by $(V-\lambda_\nu)^{-1}\bar{\phi}$, then integrating by parts, we obtain
\begin{eqnarray}\label{prop-proof-energy}
\begin{split}
\|(\partial_Y\phi,\alpha\phi)\|_{L^2}^2\leq& \mathbf{Re}\int_0^{+\infty}\frac{\mathrm{i}\alpha f_2-\partial_Y f_1}{-i\alpha(V-\lambda_\nu)}\bar{\phi}\mathrm{d}Y+\int_0^{+\infty}\frac{|\partial_Y^2 V}{|V-\lambda_\nu|}|\phi|^2\mathrm{d}Y\\
&+\frac{6}{n\mathbf{Im}\lambda_\nu}\int_0^{+\infty}\frac{|\partial_YV|^2}{|V-\lambda_\nu|^2}|\partial_Y\phi|^2+\frac{|\partial_Y^2V|^2}{|V-\lambda_\nu|^2}|\phi|^2 \mathrm{d}Y\\
&+\frac{6}{n\mathbf{Im}\lambda_\nu}\int_0^{+\infty}\Big(\frac{|\partial_Y V|^4}{|V-\lambda_\nu|^4}+\alpha^2\frac{|\partial_Y V|^2}{|V-\lambda_\nu|^2}\Big)|\phi|^2\mathrm{d}Y.
\end{split}
\end{eqnarray}
We first notice that
\begin{align*}
\Big|\mathbf{Re}\int_0^{+\infty}\frac{\mathrm{i}\alpha f_2-\partial_Y f_1}{-i\alpha(V-\lambda_\nu)}\bar{\phi}\mathrm{d}Y\Big|\leq&\f14\|(\partial_Y\phi,\alpha\phi)\|_{L^2}^2+C\Big\|\frac{f}{\alpha(V-\lambda_\nu)}\Big\|^2_{L^2}\\
&+C\|V\|\Big\|\frac{f}{\alpha(V-\lambda_\nu)^2}\Big\|^2_{L^2}.\nonumber\\
\le& \f14\|(\partial_Y\phi,\alpha\phi)\|_{L^2}^2+\f C {\lambda^2\alpha^2}\|f\|_{L^2}^2,
\end{align*}
where we used \eqref{eq:V-lam-est} in the last step. By \eqref{eq:imlam-est}, we have
\begin{align*}
\frac{1}{n\mathbf{Im}\lambda_\nu}\int_0^\infty\frac{|\partial_Y V|^4}{|V-\lambda_\nu|^4}|\phi|^2\mathrm{d}Y\leq& C\frac{\delta_1^4\|V\|^2}{n\mathbf{Im}\lambda_\nu}\int_0\|V\|^2(1+Y)^{-2}|\phi|^2\mathrm{d}Y\\
\le&\f C{n\mathbf{Im}\lambda_\nu}\big\|\f \phi Y\big\|_{L^2}^2\le \f 1 {32}\|\pa_Y\phi\|_{L^2}^2.
\end{align*}
The estimate of the other terms on the right hand side of \reff{prop-proof-energy} is similar. We finally have
\begin{align}
\|(\partial_Y\phi,\alpha\phi)\|_{L^2}^2\leq \frac{C}{\alpha^2|\lambda|^2}\|f\|_{L^2}^2.\nonumber
\end{align}
This shows \reff{lambda-large-L2}.

Now we turn to prove \reff{lambda-large-Linfinity}. We multiply both sides of \reff{eq:sc-resphi1} by $\bar{\phi}$ and integrate over $(0,+\infty)$. Then we have
\begin{align*}
-\sqrt{\nu}\|(\partial_Y^2-\alpha^2)\phi\|_{L^2}^2+\langle\mathrm{i}\alpha(V-\lambda)(\partial_Y^2-\alpha^2)\phi,\phi\rangle_{L^2}-\langle\mathrm{i}\alpha(\partial_Y^2 V)\phi,\phi\rangle_{L^2}=\langle(\mathrm{i}\alpha f_2-\partial_Y f_1),\phi\rangle_{L^2},
\end{align*}
which implies
\begin{align}\label{proof-mularge-IM}
&\sqrt{\nu}\|(\partial_Y^2-\alpha^2)\phi\|_{L^2}^2+\alpha\lambda_i\|(\partial_Y\phi,\alpha\phi)\|_{L^2}^2\\
&\quad=\alpha\mathbf{Im}\langle \pa_YV\partial_Y\phi,\phi\rangle_{L^2}-\mathbf{Re}\langle\mathrm{i}\alpha(\partial_Y^2 V)\phi,\phi\rangle_{L^2}-\mathbf{Re}\langle(\mathrm{i}\alpha f_2-\partial_Y f_1),\phi\rangle_{L^2}.
\nonumber
\end{align}
On the other hand, we have
\begin{align}\label{proof-mularge-V}
&\big|\langle\mathrm{i}\alpha(\partial_Y^2 V)\phi,\phi\rangle_{L^2}\big|\leq C\|\partial_Y\phi\|_{L^2}\|\alpha\phi/Y\|_{L^2}\le C\al\|\pa_Y\phi\|_{L^2}^2
\leq  \frac{C}{\alpha|\lambda|^2}\|f\|_{L^2}^2,\\
&\big|\alpha\langle \pa_YV\pa_Y\phi,\phi\rangle_{L^2}\big|\leq C\|\partial_Y\phi\|_{L^2}\|\alpha/Y\phi\|_{L^2}\le C\al\|\pa_Y\phi\|_{L^2}^2\leq  \frac{C}{\alpha|\lambda|^2}\|f\|_{L^2}^2,
\end{align}
and
\begin{align}\label{proof-mularge-force}
|\langle(\mathrm{i}\alpha f_2-\partial_Y f_1),\phi\rangle_{L^2}|\leq  C\|(\partial_Y\phi,\alpha\phi)\|_{L^2}\|f\|_{L^2}\leq\frac{C}{\alpha|\lambda|}\|f\|_{L^2}^{2}.
\end{align}
Hence, by collecting \reff{proof-mularge-IM}-\reff{proof-mularge-force}, we obtain \reff{lambda-large-Linfinity}.
\end{proof}

\begin{proposition}\label{prop-Immu-large}
There exists $\delta_2\in(0,1)$ such that if $\alpha\lambda_i+\sqrt{\nu}\alpha^2\geq\delta_2^{-1}$, then for any $f=(f_1,f_2)\in L^2(\mathbb{R}_+)^2$, there exists a unique weak solution $\phi\in H^2_0(\mathbb{R}_+)$ to \reff{eq:sc-resphi1}  satisfying 
\begin{align}
&\|\partial_Y\phi\|_{L^2}+\alpha\|\phi\|_{L^2}\leq\frac{C}{\alpha\lambda_i+\sqrt{\nu}\alpha^2}\|f\|_{L^2}\label{ieq-Immu-large1}\\
&\|(\partial_Y^2-\alpha^2)\phi\|_{L^2}\leq \frac{C}{\nu^{1/4}(\alpha\lambda_i+\sqrt{\nu}\alpha^2)^{1/2}}\|f\|_{L^2}.\label{ieq-Immu-large2}
\end{align}
\end{proposition}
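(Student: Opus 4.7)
The plan is to carry out the same energy-identity strategy used in the second half of the proof of Proposition \ref{prop-lambda-large}, but with the new hypothesis $\alpha\lambda_i+\sqrt{\nu}\alpha^2\geq\delta_2^{-1}$ replacing $|\lambda|\geq\delta_1^{-1}$. Testing \reff{eq:sc-resphi1} against $\bar\phi$ and integrating by parts twice on the biharmonic term using $\phi(0)=\partial_Y\phi(0)=0$, and once on $(V-\lambda)(\partial_Y^2-\alpha^2)\phi\cdot\bar\phi$ to extract $-V'\partial_Y\phi\bar\phi-(V-\lambda)(|\partial_Y\phi|^2+\alpha^2|\phi|^2)$, then taking real parts (the $\mathrm{i}\alpha V''|\phi|^2$ term drops since $V''|\phi|^2$ is real) yields exactly the identity \reff{proof-mularge-IM}:
\begin{align*}
\sqrt{\nu}\|(\partial_Y^2-\alpha^2)\phi\|_{L^2}^2+\alpha\lambda_i\|(\partial_Y\phi,\alpha\phi)\|_{L^2}^2=\alpha\mathbf{Im}\langle V'\partial_Y\phi,\phi\rangle-\mathbf{Re}\langle\mathrm{i}\alpha f_2-\partial_Y f_1,\phi\rangle.
\end{align*}

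The only essential step is the absorption of the shear term $\alpha\mathbf{Im}\langle V'\partial_Y\phi,\phi\rangle$. Since the a priori bound \reff{lambda-large-L2} used in Proposition \ref{prop-lambda-large} to control it is no longer available, I would instead rely only on the pointwise estimate $\|V'\|_{L^\infty}\leq\|V\|$ together with AM--GM in the product form
\begin{align*}
\alpha|\langle V'\partial_Y\phi,\phi\rangle|\leq\|V\|\|\partial_Y\phi\|_{L^2}\|\alpha\phi\|_{L^2}\leq\frac{\|V\|}{2}\|(\partial_Y\phi,\alpha\phi)\|_{L^2}^2.
\end{align*}
Integration by parts (using $\phi(0)=0$) gives $\|(\partial_Y^2-\alpha^2)\phi\|_{L^2}^2=\|\partial_Y^2\phi\|_{L^2}^2+2\alpha^2\|\partial_Y\phi\|_{L^2}^2+\alpha^4\|\phi\|_{L^2}^2\geq\alpha^2\|(\partial_Y\phi,\alpha\phi)\|_{L^2}^2$, which bounds the left-hand side of the energy identity from below by
\begin{align*}
\sqrt{\nu}\|(\partial_Y^2-\alpha^2)\phi\|_{L^2}^2+\alpha\lambda_i\|(\partial_Y\phi,\alpha\phi)\|_{L^2}^2\geq(\sqrt{\nu}\alpha^2+\alpha\lambda_i)\|(\partial_Y\phi,\alpha\phi)\|_{L^2}^2.
\end{align*}
Choosing $\delta_2\leq(4\|V\|)^{-1}$ forces the hypothesis to give $\sqrt{\nu}\alpha^2+\alpha\lambda_i\geq 4\|V\|$, so the shear term is at most $\tfrac18$ of the full left-hand side. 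Absorbing it and applying Cauchy--Schwarz to the forcing term produces
\begin{align*}
\sqrt{\nu}\|(\partial_Y^2-\alpha^2)\phi\|_{L^2}^2+\alpha\lambda_i\|(\partial_Y\phi,\alpha\phi)\|_{L^2}^2\leq C\|f\|_{L^2}\|(\partial_Y\phi,\alpha\phi)\|_{L^2},
\end{align*}
and combining this with the lower bound yields \reff{ieq-Immu-large1}; substituting back gives $\sqrt{\nu}\|(\partial_Y^2-\alpha^2)\phi\|_{L^2}^2\leq C\|f\|_{L^2}^2/(\sqrt{\nu}\alpha^2+\alpha\lambda_i)$, which is \reff{ieq-Immu-large2}.

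For the solvability, uniqueness is immediate from the a priori estimates applied with $f=0$. For existence I would either run a Galerkin approximation in $H^2_0(\mathbb{R}_+)$, where the same estimate holds uniformly for the finite-dimensional problems (each solvable by the finite-dimensional Fredholm alternative) and a weak limit is extracted; or invoke Fredholm theory directly by writing the Orr--Sommerfeld operator as a compact perturbation of the coercive biharmonic $-\sqrt{\nu}(\partial_Y^2-\alpha^2)^2$ on $H^2_0$. The only real obstacle in the whole argument is verifying the sharpness of the AM--GM bound on the shear term; this works here precisely because the product decomposition $\alpha\|\partial_Y\phi\|\|\phi\|=\|\partial_Y\phi\|\cdot\|\alpha\phi\|$ produces a coefficient comparable to $\|V\|$ independently of $\alpha$ and $\nu$, which is then beaten by the quantity $\sqrt{\nu}\alpha^2+\alpha\lambda_i$ guaranteed by the hypothesis once $\delta_2$ is chosen small enough depending only on $\|V\|$.
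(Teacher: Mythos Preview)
Your proof is correct and follows essentially the same energy-identity approach as the paper: test \reff{eq:sc-resphi1} against $\bar\phi$, take real parts, and absorb the shear term $\alpha\mathbf{Im}\langle V'\partial_Y\phi,\phi\rangle$ using the largeness of $\alpha\lambda_i+\sqrt{\nu}\alpha^2$. The only cosmetic difference is that the paper works with the rewritten equation \reff{eq-modify} (so that the coefficient $\alpha\mathbf{Im}\lambda_\nu=\alpha\lambda_i+\sqrt{\nu}\alpha^2$ appears directly in front of $\|(\partial_Y\phi,\alpha\phi)\|_{L^2}^2$) and uses a weighted Young inequality on the shear term, whereas you keep the full biharmonic, extract $\sqrt{\nu}\alpha^2$ via the expansion $\|(\partial_Y^2-\alpha^2)\phi\|_{L^2}^2\geq\alpha^2\|(\partial_Y\phi,\alpha\phi)\|_{L^2}^2$, and use symmetric AM--GM; both routes arrive at the same absorption and the same choice $\delta_2\sim(1+\|V\|)^{-1}$.
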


\begin{proof}
Let $\delta_2\in(0,\delta_0]$ be a small constant, which is determined later.
From the assumption of the proposition and the definition of $\lambda_\nu$, we have $\alpha\mathbf{Im}\lambda_\nu\geq\delta_2^{-1}$. By taking $L^2$-inner product on both sides of \reff{eq-modify} with $\phi$, we obtain
\begin{eqnarray}\nonumber
\begin{split}
-\sqrt{\nu}(\|\partial_Y^2\phi\|^2_{L^2}&+\alpha^2\|\partial_Y\phi\|^2_{L^2})+\langle\mathrm{i}\alpha(V-\lambda_\nu)(\partial_Y^2-\alpha^2)\phi,\phi\rangle_{L^2}-\langle\mathrm{i}\alpha(\partial_Y^2V)\phi,\phi\rangle_{L^2}\\
&=\langle(\mathrm{i}\alpha f_2-\partial_Y f_1),\phi\rangle_{L^2}.
\end{split}
\end{eqnarray}
Then by taking the real part of the above equality, we get
\begin{eqnarray}\label{proof-Immu1}
\begin{split}
\sqrt{\nu}\big(\|\partial_Y^2\phi\|^2_{L^2}&+\alpha^2\|\partial_Y\phi\|^2_{L^2}\big)+\alpha\mathbf{Im}\lambda_\nu\|(\partial_Y\phi,\alpha\phi)\|^2_{L^2}\\
&=-\alpha\mathbf{Im}\langle(V-\lambda_r)(\partial_Y^2-\alpha^2)\phi,\phi\rangle_{L^2}-\mathbf{Re}\langle(\mathrm{i}\alpha f_2-\partial_Y f_1),\phi\rangle_{L^2}\\
&=-\alpha\mathbf{Im}\langle V\partial_Y^2\phi,\phi\rangle_{L^2}-\mathbf{Re}\langle(\mathrm{i}\alpha f_2-\partial_Y f_1),\phi\rangle_{L^2}.
\end{split}
\end{eqnarray}
We also notice that
\begin{eqnarray}\label{proof-Immu2}
\begin{split}
\mathbf{Im}\langle V\partial_Y^2\phi,\phi\rangle_{L^2}\leq&\|V\|\|\partial_Y\phi\|_{L^2}\|\phi\|_{L^2}\\
\leq &\frac{\mathbf{Im}\lambda_\nu}{2}\|\partial_Y\phi\|_{L^2}^2+\frac{\|V\|^2}{2\mathbf{Im}\lambda_\nu}\|\phi\|^2_{L^2},
\end{split}
\end{eqnarray}
and
\begin{align}\label{proof-Immu3}
\big|\langle(\mathrm{i}\alpha f_2-\partial_Y f_1),\phi\rangle_{L^2}\big|\leq \|f\|_{L^2}\|(\partial_y\phi,\alpha\phi)\|_{L^2}.
\end{align}
Hence, after taking $\delta_2\leq \frac{1}{4(1+\|V\|)}$ and collecting \reff{proof-Immu1} and \reff{proof-Immu2} and \reff{proof-Immu3}, we obtain
\begin{align}
\sqrt{\nu}\big(\|\partial_Y^2\phi\|^2_{L^2}&+\alpha^2\|\partial_Y\phi\|^2_{L^2}\big)+\alpha\mathbf{Im}\lambda_\nu\|(\partial_Y\phi,\alpha\phi)\|^2_{L^2}\leq \frac{C}{\alpha\mathbf{Im}\lambda_\nu}\|f\|^2_{L^2},
\end{align}
which gives \reff{ieq-Immu-large1} and \reff{ieq-Immu-large2}.
\end{proof}

\subsection{Resolvent estimates when $|\lambda|\leq \delta^{-1}_1$}
The purpose of this part is to give the resolvent estimates when $|\lambda|\leq \delta^{-1}_1$. However, the boundary condition in \reff{eq:sc-resphi1} brings a lot of troubles to obtain an appropriate bound.  Our main idea to overcome the difficulty generated by the boundary is the following:

\begin{enumerate}
\item We first obtain the resolvent estimates under the Navier-slip boundary condition, which allows us to use some special structures of the first equation of \reff{eq:sc-resphi1} by using integration by parts argument.

\item We show the bounds of the boundary corrector. Such corrector is built around the Airy function and perfectly matches the boundary layer.

\item By combining the controls of the above two, we obtain the resolvent estimates for non-slip boundary condition.
\end{enumerate}

Throughout this subsection, we assume that $\gamma \in [\f 23,1], |\lambda|\le \delta_1^{-1}$ and (SC) condition holds.

\subsubsection{Resolvent estimates for Navier-slip boundary condition}
In this part, we replace the non-slip boundary condition of \reff{eq:sc-resphi1} by Navier-slip boundary condition. That is, we consider the following system
\begin{align}\label{eq:reswNa1}\left\{\begin{aligned}
&-\sqrt{\nu}(\partial_Y^2-\alpha^2)w + \mathrm{i}\alpha\big((V-\lambda)w-(\partial_Y^2V)\phi\big)=F,\\
&(\partial_Y^2-\alpha^2)\phi=w,\ w|_{Y=0}=\phi|_{Y=0}=0.
\end{aligned}\right.
\end{align}
where $F= -\partial_YF_1+\mathrm{i}\alpha F_2$. Since the source term $F$ actually belongs to $H^{-1}(\mathbb{R}_+)$ due to $F_1,F_2\in L^2(\mathbb{R}_+)$, we decompose $w=w_1+w_2$ with $w_1$ and $w_2$ satisfying
\begin{align}\label{eq:B1}\left\{
\begin{aligned}
&-\sqrt{\nu}(\partial_Y^2-\alpha^2)w_1+\mathrm{i}\alpha \big(\partial_Y\big((V-\lambda)\phi'_1\big)-(V-\lambda)\alpha^2\phi_1-V''\phi_1\big) =F,\\
&(\partial_Y^2-\alpha^2)\phi_1=w_1,\quad w_1|_{Y=0}=\phi_1|_{Y=0}=0,
\end{aligned}\right.\end{align}
and
\begin{align}\label{eq:B3}\left\{
\begin{aligned}
&-\sqrt{\nu}(\partial_Y^2-\alpha^2)w_2+\mathrm{i}\alpha\big((V-\lambda)w_2 -V''\phi_2\big) =V'h,\\
&(\partial_Y^2-\alpha^2)\phi_2=w_2,\quad w_2|_{Y=0}=\phi_2|_{Y=0}=0,
\end{aligned}\right.\end{align}
with $h=\mathrm{i}\alpha\partial_Y\phi_1$.

\begin{proposition}\label{pro:B3resH-1}
There exists $\delta_*\in(0,\delta_1]$ such that if $\lambda$ satisfies \reff{Remu} for some $\delta\in(0,\delta_*]$, then the unique solution to \eqref{eq:reswNa1} satisfies
  \begin{align*}
    \nu^{\f14}\alpha^\f12\lambda_i^{-\f12}\|w\|_{L^2} +\alpha\lambda_i\|(\partial_Y\phi,\alpha\phi)\|_{L^2}\leq C\lambda_i^{-1}\|(F_1,F_2)\|_{L^2},
\end{align*}
where the constant $C$ only depends on $\|V\|$.
\end{proposition}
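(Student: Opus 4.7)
The plan is to exploit the decomposition $w=w_1+w_2$, $\phi=\phi_1+\phi_2$ set up in \eqref{eq:B1}--\eqref{eq:B3}, estimate each piece on its own, and then assemble. The $w_1$-system is recast in a divergence form that matches the transport term $(V-\lambda)\phi_1'$, so a direct energy estimate suffices; by contrast the $w_2$-system forces us to confront the Rayleigh-type singularity $1/(V-\lambda)$, which will be handled by a weighted $L^2$ estimate combined with Rayleigh's trick for its stream function.

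\emph{Estimate of $(w_1,\phi_1)$.} Multiplying \eqref{eq:B1} by $\bar\phi_1$ and integrating over $\mathbb{R}_+$, the divergence-form term $\partial_Y((V-\lambda)\phi_1')$ becomes, after one integration by parts, $-(V-\lambda)|\phi_1'|^2$, whose imaginary part gives $\lambda_i\|\partial_Y\phi_1\|_{L^2}^2$. Combining with the terms $-(V-\lambda)\alpha^2|\phi_1|^2$ and $-V''|\phi_1|^2$ (the latter controlled by Hardy's inequality together with the (SC) decay $|V''|\lesssim(1+Y)^{-2}$), and integrating the $\sqrt\nu(\partial_Y^2-\alpha^2)w_1$ contribution by parts to produce $\sqrt\nu\|(\partial_Y w_1,\alpha w_1)\|_{L^2}^2$, I arrive at
\[
\alpha\lambda_i\|(\partial_Y\phi_1,\alpha\phi_1)\|_{L^2}^2+\sqrt\nu\|(\partial_Y w_1,\alpha w_1)\|_{L^2}^2\le C\|F\|_{L^2}\|(\partial_Y\phi_1,\alpha\phi_1)\|_{L^2},
\]
so that $\alpha\lambda_i\|(\partial_Y\phi_1,\alpha\phi_1)\|_{L^2}\le C\|F\|_{L^2}\le C\lambda_i^{-1}\|F\|_{L^2}$ and, in particular, $\|h\|_{L^2}=\alpha\|\partial_Y\phi_1\|_{L^2}\le C\lambda_i^{-1}\|F\|_{L^2}$.

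\emph{Estimate of $(w_2,\phi_2)$.} Following the overview, I test \eqref{eq:B3} against $\bar w_2/(V''-\varsigma)$ with a small $\varsigma>0$; since $V''-\varsigma$ has a constant sign under (SC) and $V''/(V''-\varsigma)=1+\varsigma/(V''-\varsigma)$, the contribution from $V''\phi_2\cdot\bar w_2/(V''-\varsigma)$ yields the weighted bound
\[
\sqrt\nu\bigl\|(\partial_Y w_2,\alpha w_2)/|V''-\varsigma|^{1/2}\bigr\|_{L^2}^2+\alpha\lambda_i\bigl\|w_2/|V''-\varsigma|^{1/2}\bigr\|_{L^2}^2\le C(\alpha\lambda_i)^{-1}\|h\|_{L^2}^2+C\varsigma(\alpha\lambda_i)^{-1}\|\alpha\phi_2\|_{L^2}^2.
\]
To estimate $\phi_2$ itself, I rewrite \eqref{eq:B3} as the inhomogeneous Rayleigh equation $\mathrm{i}\alpha R\phi_2=\sqrt\nu(\partial_Y^2-\alpha^2)w_2+V'h$ and invoke Lemma \ref{lem:GMMray1} with $h_1=h/(\mathrm{i}\alpha)$, $h_2=\sqrt\nu\partial_Y w_2/(\mathrm{i}\alpha)$, $h_3=\sqrt\nu w_2$, obtaining
\[
\|(\partial_Y\phi_2,\alpha\phi_2)\|_{L^2}\le C(\alpha\lambda_i)^{-1}\|h\|_{L^2}+C\sqrt\nu\lambda_i^{-2}\alpha^{-1}\bigl\|(\partial_Y w_2,\alpha w_2)/|V''-\varsigma|^{1/2}\bigr\|_{L^2}.
\]

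The hard part is closing this coupled pair: the weighted $w_2$-bound contains $\|\alpha\phi_2\|_{L^2}$ on its right-hand side, while the Rayleigh bound on $\phi_2$ contains the weighted $w_2$-norm. Substituting one into the other produces an absorption requirement of the form $C\nu^{1/4}\alpha^{-1/2}\lambda_i^{-5/2}\varsigma^{1/2}\le 1/2$ together with the leading-order factor $\nu^{1/4}\alpha^{-1/2}\lambda_i^{-3/2}\le C$; both are exactly guaranteed by \eqref{Remu} for $\gamma\ge 2/3$ upon taking $\delta_*$ (and $\varsigma$) sufficiently small, since $\alpha=\sqrt\nu|n|$ converts \eqref{Remu} into $\lambda_i\gtrsim\nu^{(1-\gamma)/2}\alpha^{\gamma-1}/\delta$, which at $\gamma=2/3$ is precisely $\nu^{1/6}\alpha^{-1/3}/\delta$. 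Once the loop is closed, I obtain $\alpha\lambda_i\|(\partial_Y\phi_2,\alpha\phi_2)\|_{L^2}\le C\|h\|_{L^2}\le C\lambda_i^{-1}\|F\|_{L^2}$ together with $\nu^{1/4}(\alpha\lambda_i)^{1/2}\|(\partial_Y w_2,\alpha w_2)/V'\|_{L^2}+\alpha\lambda_i\|w_2/V'\|_{L^2}\le C\|h\|_{L^2}$, the switch of weight from $|V''-\varsigma|^{1/2}$ to $V'$ being justified by the (SC) bound $-V''\ge(V')^2/M$. Using $|V'|\le\|V\|$ then upgrades to $\nu^{1/4}\alpha^{1/2}\lambda_i^{-1/2}\|w_2\|_{L^2}\le C\lambda_i^{-1}\|F\|_{L^2}$. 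Adding this to the easier control of $w_1$ from Step 1 and summing the $\phi_1,\phi_2$ bounds gives the announced inequality.
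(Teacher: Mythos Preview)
Your approach is essentially the same as the paper's: decompose $w=w_1+w_2$ via \eqref{eq:B1}--\eqref{eq:B3}, handle $(w_1,\phi_1)$ by a direct energy estimate, and treat $(w_2,\phi_2)$ by the weighted estimate against $\bar w_2/(V''-\varsigma)$ combined with Lemma~\ref{lem:GMMray1}, then close the loop using $\gamma\ge 2/3$. Two small slips in Step~1 deserve correction, however.

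First, testing \eqref{eq:B1} against $-\bar\phi_1$, the viscous contribution is
\[
\sqrt\nu\big\langle(\partial_Y^2-\alpha^2)w_1,\bar\phi_1\big\rangle=\sqrt\nu\big\langle w_1,(\partial_Y^2-\alpha^2)\bar\phi_1\big\rangle=\sqrt\nu\|w_1\|_{L^2}^2,
\]
not $\sqrt\nu\|(\partial_Y w_1,\alpha w_1)\|_{L^2}^2$; the two integrations by parts use $w_1(0)=\phi_1(0)=0$ and $(\partial_Y^2-\alpha^2)\phi_1=w_1$. This matters: from $\|(\partial_Y w_1,\alpha w_1)\|_{L^2}$ you could only recover $\|w_1\|_{L^2}$ at the cost of a factor $\alpha^{-1}$, which is unbounded in the regime $\alpha\sim\nu^{1/2}$ and would prevent the final $\nu^{1/4}\alpha^{1/2}\lambda_i^{-1/2}\|w\|_{L^2}$ bound from closing. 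With the correct term $\sqrt\nu\|w_1\|_{L^2}^2$ you get exactly $\nu^{1/4}(\alpha\lambda_i)^{1/2}\|w_1\|_{L^2}\le C\|(F_1,F_2)\|_{L^2}$, as in the paper.

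Second, the term $\mathrm{i}\alpha\int V''|\phi_1|^2\,\mathrm{d}Y$ is purely imaginary, so it disappears when you take the real part; no Hardy inequality or decay of $V''$ is needed there. Likewise the $(V-\lambda_r)$ piece contributes only to the imaginary part. With these two corrections your argument matches the paper's proof.
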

\begin{proof}
 Let $w$ be the solution to \reff{eq:reswNa1} and $\phi$ be the corresponding stream function. As we mentioned on above, we decompose $w$ as $w=w_1+w_2$.
 
% where
%  \begin{align*}\left\{\begin{aligned}
%  &-\sqrt{\nu}(\partial_Y^2-\alpha^2)w_1+\mathrm{i}\alpha\big(\partial_Y\big( (V-\lambda)\phi_1'\big)-(V-\lambda)\alpha^2\phi_1-V''\phi_1\big)=F,\\
%  &-\sqrt{\nu}(\partial_Y^2-\alpha^2)w_2+\mathrm{i}\alpha \big((V-\lambda)w_2-V''\phi_2\big)=\mathrm{i}\alpha V'\partial_Y\phi_1,\\
%  &F=-\partial_YF_1+\mathrm{i}\alpha F_2,\\
%  &(\partial_Y^2-\alpha^2)\phi_i=w_i,\ i\in\{1,2\},\\
%  &w_i|_{Y=0}=w_i|_{Y=+\infty}=\phi_i|_{Y=0}=\phi_i|_{Y=+\infty}=0,\ i\in\{1,2\}.
%  \end{aligned}\right.\end{align*}

We first give the estimates for $(w_1,\phi_1)$. By taking inner product with $-\bar{\phi}_1$ to \eqref{eq:B1}, we have
  \begin{align*}
     & \sqrt{\nu}\|w_1\|_{L^2}^2+ \alpha\lambda_i\|(\partial_Y\phi_1,\alpha\phi_1)\|_{L^2}^2 +\mathrm{i}\alpha\int_{0}^{+\infty}\bigg((V-\lambda_r)(|\phi_1'|^2+|\alpha\phi_1|^2) +V''|\phi_1|^2\bigg)\mathrm{d}Y\\
     &=-\int_{0}^{+\infty}(-\partial_YF_1+\mathrm{i}\alpha F_2)\bar{\phi}_1\mathrm{d}Y.
  \end{align*}
  Then considering the real part, we get
  \begin{align*}
     &\nu^{\f12}\|w_1\|_{L^2}^2+ \alpha\lambda_i\|(\partial_Y\phi_1,\alpha\phi_1)\|_{L^2}^2\leq \|(F_1,F_2)\|_{L^2}\|(\partial_Y\phi_1,\alpha\phi_1)\|_{L^2},
    \end{align*}
which implies
  \begin{align*}
     & \nu^{\f14}(\alpha\lambda_i)^{\f12}\|w_1\|_{L^2}+\alpha\lambda_i\|( \partial_Y\phi_1,\alpha\phi_1)\|_{L^2}\leq C\|(F_1,F_2)\|_{L^2}.
  \end{align*}

Now we turn to deal with $(w_2,\phi_2)$. Noticing that $(w_2,\phi_2)$ satisfies the system \eqref{eq:B3}, then by Lemma \ref{lem:resB3}, we get
\begin{align*}
   &\alpha\lambda_i\|w_2\|_{L^2}+ \alpha\lambda_i\|(\partial_Y\phi_2,\alpha\phi_2)\|_{L^2}\\
  &\leq \alpha\lambda_i\|V'\|_{L^\infty}\left\|w_2/V'\right\|_{L^2}+ \alpha\lambda_i\|(\partial_Y\phi_2,\alpha\phi_2)\|_{L^2}\\
   & \leq C\alpha\lambda_i\big(\left\|w_2/V'\right\|_{L^2}+ \|(\partial_Y\phi_2,\alpha\phi_2)\|_{L^2}\big)\leq C\alpha\|\partial_Y \phi_1\|_{L^2}.
  \end{align*}
Then we obtain
\begin{align*}
   \alpha\lambda_i\|(\partial_Y\phi,\alpha\phi)\|_{L^2}\leq &\alpha\lambda_i\big(\|(\partial_Y\phi_1,\alpha\phi_1)\|_{L^2}+ \|(\partial_Y\phi_2,\alpha\phi_2)\|_{L^2}\big)\\
   \leq &C\alpha\|\partial_Y\phi_1\|_{L^2} +\alpha\lambda_i\|(\partial_Y\phi_1,\alpha\phi_1)\|_{L^2}\\
   \leq & C\alpha \|(\partial_Y\phi_1,\alpha\phi_1)\|_{L^2} \leq C\lambda_i^{-1}\|(F_1,F_2)\|_{L^2}.
\end{align*}
For $\|w\|_{L^2}$, we first notice that
\begin{align*}
   \|w\|_{L^2}\leq& \|w_1\|_{L^2}+\|w_2\|_{L^2}\leq C\lambda_i^{-1}\|\partial_Y\phi_1\|_{L^2}+ \|w_1\|_{L^2}
   \leq C\big(\alpha^{-1}\lambda_i^{-2}+ \nu^{-\f14}(\alpha\lambda_i)^{-\f12}\big)\|(F_1,F_2)\|_{L^2}.
\end{align*}
Thanks to \reff{Remu} and $\gamma\ge \f23$, we obtain
\begin{align*}
\alpha^{-1}\lambda_i^{-2}+\nu^{-\f14}(\alpha\lambda_i)^{-\f12}&=\nu^{-\f14}(\alpha\lambda_i)^{-\f12}(\alpha^{-\f12}\lambda_i^{-\f32}\nu^{\f14}+1)\\&\leq \nu^{-\f14}(\alpha\lambda_i)^{-\f12}(n^{1-\f32\gamma}+1)\leq C\nu^{-\f14}(\alpha\lambda_i)^{-\f12}.
\end{align*}
Finally, we have
\begin{align*}
\|w\|_{L^2}\leq C\nu^{-\f14}(\alpha\lambda_i)^{-\f12}\|(F_1,F_2)\|_{L^2}.
\end{align*}

This finishes the proof.
\end{proof}

The following lemma gives  the control of the solution to \reff{eq:B3}.

\begin{lemma}\label{lem:resB3}
Let $h\in L^2(\mathbb{R}_+)$. Then there exists $\delta_*\in(0,\delta_1]$ such that if $\lambda$ satisfies \reff{Remu} for some $\delta\in(0,\delta_*]$, then the unique solution to \eqref{eq:B3} satisfies
\begin{align*}
  & \alpha\lambda_i\|(\partial_Y\phi,\alpha\phi)\|_{L^2}\leq C\|h\|_{L^2},\\
&\nu^{\f14}(\alpha\lambda_i)^{\f12}\left\|(\partial_Yw,\alpha w)/V'\right\|_{L^2}+\alpha\lambda_i\left\|w/V'\right\|_{L^2}\leq C\|h\|_{L^2}.
\end{align*}
\end{lemma}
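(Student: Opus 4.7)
I would follow the two-step strategy sketched in Section~2.2: first establish a weighted $L^2$-energy estimate for $w_2$, then combine it with the Rayleigh trick (Lemma~\ref{lem:GMMray1}) applied to the stream function $\phi_2$. The (SC) condition is essential throughout---$-MV''\geq (V')^2$ makes $V''$ nondegenerate exactly where $V'$ is, while $|V'''/V''|+|V''/V'|\leq M$ tames the commutator terms produced by differentiating a non-constant weight.

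\textbf{Step 1: weighted $w_2$-estimate.} The first step is to test the first equation of \eqref{eq:B3} against $\bar{w}_2/(V''-\varsigma)$ for a small fixed $\varsigma>0$. Since $V''<0$ by (SC), the weight is smooth, strictly negative, and $|V''-\varsigma|=|V''|+\varsigma\geq\varsigma$ stays uniformly positive at infinity where $V''\to 0$. Taking real parts: the diffusion produces $\sqrt{\nu}\|(\pa_Yw_2,\alpha w_2)/|V''-\varsigma|^{1/2}\|_{L^2}^2$ (commutators from $\pa_Y(1/(V''-\varsigma))$ absorbed via (SC)); the term $\mathrm{i}\alpha(V-\lambda)w_2$ yields $\alpha\lambda_i\|w_2/|V''-\varsigma|^{1/2}\|_{L^2}^2$; the non-local term $-\mathrm{i}\alpha V''\phi_2$, split as $V''/(V''-\varsigma)=1+\varsigma/(V''-\varsigma)$, leaves only an $O(\varsigma)$ multiple of $\|\alpha\phi_2\|_{L^2}^2$ on the RHS; the forcing $V'h$ is treated by Cauchy--Schwarz using $(V')^2\lesssim|V''-\varsigma|$. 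This yields
\begin{align*}
\sqrt{\nu}\Big\|\tfrac{(\pa_Yw_2,\alpha w_2)}{|V''-\varsigma|^{1/2}}\Big\|_{L^2}^{2}+\alpha\lambda_i\Big\|\tfrac{w_2}{|V''-\varsigma|^{1/2}}\Big\|_{L^2}^{2}\leq C(\alpha\lambda_i)^{-1}\|h\|_{L^2}^2+C\varsigma(\alpha\lambda_i)^{-1}\|\alpha\phi_2\|_{L^2}^2.
\end{align*}

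\textbf{Step 2: Rayleigh trick and closing the loop.} I rewrite the equation as $\mathrm{i}\alpha R\phi_2=\sqrt{\nu}(\pa_Y^2-\alpha^2)w_2+V'h=V'h_1+\pa_Yh_2+\mathrm{i}\alpha h_3$ with $h_1=h/(\mathrm{i}\alpha)$, $h_2=\sqrt{\nu}\pa_Yw_2/(\mathrm{i}\alpha)$, $h_3=\sqrt{\nu}w_2$, where $R\phi=(V-\lambda)(\pa_Y^2-\alpha^2)\phi-V''\phi$. Applying Lemma~\ref{lem:GMMray1} and using $V'\lesssim|V''-\varsigma|^{1/2}$ (from (SC)) to switch $h_2,h_3$ into $V'$-weighted form yields
\begin{align*}
\|(\pa_Y\phi_2,\alpha\phi_2)\|_{L^2}\leq C(\alpha\lambda_i)^{-1}\|h\|_{L^2}+C\nu^{1/2}\lambda_i^{-2}\alpha^{-1}\Big\|\tfrac{(\pa_Yw_2,\alpha w_2)}{|V''-\varsigma|^{1/2}}\Big\|_{L^2}.
\end{align*}
Substituting this into the Step~1 estimate, the $\varsigma(\alpha\lambda_i)^{-1}\|\alpha\phi_2\|^2$ term becomes at worst an $O(\nu\,\varsigma\,\alpha^{-2}\lambda_i^{-5})$ multiple of the LHS weighted $w_2$-norm squared, which is absorbed once $\varsigma$ and $\delta_*$ are small. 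The hypothesis $\gamma\geq 2/3$ and the middle-frequency range enter via $\alpha\lambda_i\gtrsim\nu^{1/2}|n|^\gamma/\delta_*$, keeping the absorption constant strictly less than one. The first asserted inequality of the lemma then follows from Step~2.

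\textbf{Main obstacle.} The technical heart is upgrading the $|V''-\varsigma|^{1/2}$-weighted $w_2$ bound to the $V'$-weighted bound claimed in the lemma: since (SC) gives $V'\lesssim|V''-\varsigma|^{1/2}$, the weight $V'$ is \emph{uniformly smaller} than $|V''-\varsigma|^{1/2}$, so $\|w_2/V'\|_{L^2}\gtrsim\|w_2/|V''-\varsigma|^{1/2}\|_{L^2}$, the opposite direction of what one needs. This requires redoing the energy argument with a test function involving $1/V'$ (or $1/(V')^2$), using $|V''/V'|\leq M$ and $|V'''/V''|\leq M$ to control the commutators that would otherwise blow up as $V'\to 0$, and using the exponential decay of $w_2$ at infinity (forced by the diffusive-reactive equation when $\mathbf{Re}\mu>0$) to control the tail where $V'$ vanishes. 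Balancing $\varsigma$, $\delta_*$, and the various powers of $\alpha\lambda_i$ so that both the Step~1 absorption and this $V'$-weighted re-derivation close simultaneously is what makes the argument delicate.
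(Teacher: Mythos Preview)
Your Steps 1 and 2 are exactly the paper's argument: multiply \eqref{eq:B3} by $-\bar w/(V''-\varsigma)$ to get the weighted vorticity bound \eqref{est:nablew1}, then rewrite the equation as $\mathrm{i}\alpha\,R\phi_2=\sqrt{\nu}(\partial_Y^2-\alpha^2)w_2+V'h$ and apply Lemma~\ref{lem:GMMray1} with $h_1=h/(\mathrm{i}\alpha)$, $h_2=\sqrt{\nu}\partial_Yw_2/(\mathrm{i}\alpha)$, $h_3=\sqrt{\nu}w_2$. Substituting \eqref{est:nablew1} into the Rayleigh estimate and using $\nu^{1/4}(\alpha\lambda_i)^{-3/2}\le \alpha^{-1}$ (this is where $\gamma\ge 2/3$ enters) already closes the loop and gives the first inequality of the lemma after letting $\varsigma\to 0^+$; no absorption constant involving $\varsigma$ needs to be tuned. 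So your description of Step~2 as requiring a delicate simultaneous balance of $\varsigma$ and $\delta_*$ is more complicated than what the paper actually does.

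Regarding your ``main obstacle'': the paper does \emph{not} redo the energy argument with a $1/V'$ or $1/(V')^2$ test function. Once the $\phi$-bound is fed back into \eqref{est:nablew1}, the paper obtains
\[
\nu^{1/4}\Big\|\tfrac{(\partial_Yw,\alpha w)}{|V''-\varsigma|^{1/2}}\Big\|_{L^2}+(\alpha\lambda_i)^{1/2}\Big\|\tfrac{w}{|V''-\varsigma|^{1/2}}\Big\|_{L^2}\le C(\alpha\lambda_i)^{-1/2}\big(1+\varsigma^{1/2}(\alpha\lambda_i)^{-1}\big)\|h\|_{L^2},
\]
uniformly in $\varsigma>0$, then asserts the pointwise comparison $1/|V''-\varsigma|^{1/2}\ge CM^{1/2}/\big(V'+(M\varsigma)^{1/2}\big)$ and sends $\varsigma\to 0^+$ to reach the $V'$-weighted bound directly. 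Your instinct that the direction here is delicate is well taken: the (SC) inequality $(V')^2\le -MV''$ gives $V'\le M^{1/2}|V''-\varsigma|^{1/2}$, which is the \emph{opposite} pointwise comparison, and the paper's displayed inequality would amount to $|V''|^{1/2}\lesssim V'$ in the limit, which fails for profiles like $U(Y)=1-e^{-Y}$. So your caution is justified; but the route the paper presents is this pointwise-comparison-plus-limit, not a fresh weighted energy identity. For the downstream application (Proposition~\ref{pro:B3resH-1}) only $\|w_2\|_{L^2}$ is actually used, and that follows already from the $|V''|^{1/2}$-weighted bound via $\|w_2\|_{L^2}\le \||V''|^{1/2}\|_{L^\infty}\|w_2/|V''|^{1/2}\|_{L^2}$, so the result survives even if the $V'$-weighted formulation in the lemma statement is stronger than what the written argument establishes.
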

\begin{proof}
Let $\varsigma$ be an arbitrary fixed small  positive number.

\textbf{Step 1.  Estimate of the vorticity $w$ in a weighted norm}\smallskip

By taking $L^2$-inner product with $-w/(V''-\varsigma)$ on both sides of the first equation of \reff{eq:B3}, we obtain
\begin{align}\label{L2productw}
\mathbf{Re}\langle-\sqrt{\nu}(\partial_Y^2-\alpha^2)w+\mathrm{i}\alpha((V-\lambda)w-V''\phi),-\frac{w}{V''-\varsigma}\rangle\leq |\langle V'h,\frac{w}{V''-\varsigma}\rangle|.
\end{align}

For each terms on the left-hand side of \reff{L2productw}, we first have
\begin{align*}
     \big\langle (\partial_Y^2-\alpha^2)w,w/(V''-\varsigma)\big\rangle= &\left\|\dfrac{(\partial_Yw,\alpha w)}{|V''-\varsigma|^{\f12}}\right\|_{L^2}^2 +\left\langle\partial_Yw,\dfrac{wV'''}{(V''-\varsigma)^2}\right\rangle,
\end{align*}
which gives
\begin{eqnarray}\label{nuww}
\begin{split}
&\mathbf{Re}\langle \sqrt{\nu}(\partial_Y^2-\alpha^2)w,w/(V''-\varsigma)\big\rangle\\
&=\sqrt{\nu}\left\|\dfrac{(\partial_Yw,\alpha w)}{|V''-\varsigma|^{\f12}}\right\|_{L^2}^2 +\sqrt{\nu}\mathbf{Re}\left\langle\partial_Yw,\dfrac{wV'''}{(V''-\varsigma)^2}\right\rangle\\
&\geq\sqrt{\nu}\left\|\dfrac{(\partial_Yw,\alpha w)}{|V''-\varsigma|^{\f12}}\right\|_{L^2}^2-\sqrt{\nu}\left\| \dfrac{\partial_Yw}{|V''-\varsigma|^{\f12}}\right\|_{L^2}\left\| \dfrac{V'''}{V''-\varsigma}\right\|_{L^\infty}\left\| \dfrac{w}{|V''-\varsigma|^{\f12}}\right\|_{L^2}\\
&\geq \frac{\sqrt{\nu}}{2}\left\|\dfrac{(\partial_Yw,\alpha w)}{|V''-\varsigma|^{\f12}}\right\|_{L^2}^2-C\sqrt{\nu}\left\| \dfrac{w}{|V''-\varsigma|^{\f12}}\right\|_{L^2}^2,
\end{split}
\end{eqnarray}
where we used $|V'''/V''|+|V''/V'|\leq C$ in the last inequality.

We also notice that
\begin{align*}
     &\mathbf{Im} \big\langle (V-\lambda)w-V''\phi,w/(V''-\varsigma)\big\rangle\\
     &= \mathbf{Im}\bigg( \big\langle V-\lambda,|w|^2/(V''-\varsigma)\big\rangle+\|(\partial_Y\phi,\alpha \phi)\|_{L^2}^2-\varsigma\big\langle \phi,w/(V''-\varsigma)\big\rangle\bigg)\\
     &=\lambda_i\left\|\dfrac{w}{|V''-\varsigma|^{\f12}}\right\|_{L^2}^2-\varsigma\mathbf{Im} \big\langle \phi,w/(V''-\varsigma)\big\rangle,
  \end{align*}
from which, we deduce that
\begin{eqnarray}
\begin{split}
\mathbf{Re}\left\langle\mathrm{i}\alpha(V-\lambda)w-V''\phi),-\frac{w}{V''-\varsigma}\right\rangle&=\alpha\mathbf{Im} \big\langle (V-\lambda)w-V''\phi,w/(V''-\varsigma)\big\rangle\\
&=\alpha\lambda_i\left\|\dfrac{w}{|V''-\varsigma|^{\f12}}\right\|_{L^2}^2-\varsigma\alpha\mathbf{Im} \big\langle \phi,w/(V''-\varsigma)\big\rangle\\
&\geq\alpha\lambda_i\left\|\dfrac{w}{|V''-\varsigma|^{\f12}}\right\|_{L^2}^2-\varsigma^{\frac{1}{2}}\left\|\dfrac{w}{|V''-\varsigma|^{\f12}}\right\|_{L^2}\|\alpha\phi\|_{L^2}\\
&\geq \frac{\alpha\lambda_i}{2}\left\|\dfrac{w}{|V''-\varsigma|^{\f12}}\right\|_{L^2}^2-C\varsigma(\alpha\lambda_i)^{-1}\|\alpha\phi\|_{L^2}^2.
\end{split}
\end{eqnarray}
According to  the above inequality,  \reff{L2productw} and \reff{nuww}, we obtain
\begin{eqnarray*}
\begin{split}
&\sqrt{\nu}\left\|\frac{(\partial_Y w,\alpha w)}{|V''-\varsigma|^{1/2}}\right\|^{2}_{L^2}+\alpha\lambda_i\left\|\dfrac{w}{|V''-\varsigma|^{\f12}}\right\|_{L^2}^2\\
&\leq 2\|h\|_{L^2}\left\|\frac{V'}{|V''-\varsigma|^{\f12}}\right\|_{L^\infty}\left\|\dfrac{w}{|V''-\varsigma|^{\f12}}\right\|_{L^2}+C\sqrt{\nu}\left\|\dfrac{w}{|V''-\varsigma|^{\f12}}\right\|_{L^2}^2+C\varsigma(\alpha\lambda_i)^{-1}\|\alpha\phi\|^2_{L^2},
\end{split}
\end{eqnarray*}
which gives
\begin{align}\label{est:nablew1}
     & \sqrt{\nu} \left\|\dfrac{(\partial_Yw,\alpha w)}{|V''-\varsigma|^{\f12}}\right\|_{L^2}^2 +\alpha\lambda_i \left\|\dfrac{w}{|V''-\varsigma|^{\f12}}\right\|_{L^2}^2\leq C(\alpha\lambda_i)^{-1}\|h\|_{L^2}^2 +C\varsigma(\alpha\lambda_i)^{-1}\|\alpha\phi\|_{L^2}^2,
  \end{align}
along with the (SC) condition and the fact that
\begin{align*}
\alpha\lambda_i\geq \frac{\delta_0^\gamma}{\delta}\sqrt{\nu},~~\text{with $\delta$ small enough}.
\end{align*}

\textbf{Step 2. Estimates via the Rayleigh equation}\smallskip

Denote $\textsl{R}\phi :=(V-\lambda)(\partial_Y^2-\alpha^2)\phi-V''\phi$, then we can write the equation as
\begin{align*}
   &\mathrm{i}\alpha(\textsl{R}\phi) =\sqrt{\nu}(\partial_Y^2-\alpha^2)w+V'h.
\end{align*}
Applying Lemma \ref{lem:GMMray1} with $h_1=h/(\mathrm{i}\alpha),\ h_2=\sqrt{\nu}\partial_Yw/(\mathrm{i}\alpha),\ h_3=\sqrt{\nu} w$, we get
\begin{align*}
   \|(\partial_Y\phi,\alpha\phi)\|_{L^2}\leq &C\big(\lambda_i^{-1}\|h_1\|_{L^2}+\lambda_i^{-2}\|(h_2,h_3)\|_{L^2}\big)\\
   \leq &C(\alpha\lambda_i)^{-1}\|h\|_{L^2}+ C\nu^{\f12}\lambda_i^{-2}\alpha^{-1}\|(\partial_Yw,\alpha w)\|_{L^2}\\
   \leq &C(\alpha\lambda_i)^{-1}\|h\|_{L^2}+ C\nu^{\f12}\lambda_i^{-2}\alpha^{-1}\|V''-\varsigma\|_{L^\infty}^{\f12} \left\|\dfrac{(\partial_Yw,\alpha w)}{|V''-\varsigma|^{\f12}}\right\|_{L^2}\\
   \leq &C(\alpha\lambda_i)^{-1}\|h\|_{L^2}+ C\nu^{\f12}\lambda_i^{-2}\alpha^{-1}\left\|\dfrac{(\partial_Yw,\alpha w)}{|V''-\varsigma|^{\f12}}\right\|_{L^2}.
\end{align*}
Substituting \eqref{est:nablew1} into the above inequality, we deduce that
\begin{align*}
   & \|(\partial_Y\phi,\alpha\phi)\|_{L^2}\leq C(\alpha\lambda_i)^{-1}\|h\|_{L^2} +C\nu^{\f14}(\alpha\lambda_i)^{-\f32}\lambda_i^{-1}\|h\|_{L^2}+ C\varsigma^{\f12}\nu^{\f14}(\alpha\lambda_i)^{-\f32}\lambda_i^{-1}\|\alpha\phi\|_{L^2}.
\end{align*}
Letting $\varsigma\rightarrow 0^{+},$ we infer that
\begin{align}\label{est:phiB3pori}
   & \|(\partial_Y\phi,\alpha\phi)\|_{L^2}\leq C(\alpha\lambda_i)^{-1}\|h\|_{L^2} +C\nu^{\f14}(\alpha\lambda_i)^{-\f32}\lambda_i^{-1}\|h\|_{L^2}.
\end{align}
On the other hand, we notice that by  \reff{Remu}
\begin{align*}
\nu^{\f14}(\alpha\lambda_i)^{-\f32}\leq \alpha^{-1}n^{1-\f32\gamma} \delta^\f32\leq\alpha^{-1},
\end{align*}
provided that $\gamma\in[2/3,1]$.
Then \eqref{est:phiB3pori} becomes
\begin{align}\label{est:parphiB3}
   \|(\partial_Y\phi,\alpha\phi)\|_{L^2}\leq C(\alpha\lambda_i)^{-1}\|h\|_{L^2}.
\end{align}
Putting this into \eqref{est:nablew1}, we conclude that
\begin{align*}
   & \nu^{\f14}\left\|\dfrac{(\partial_Yw,\alpha w)}{|V''-\varsigma|^{\f12}}\right\|_{L^2}+(\alpha\lambda_i)^{\f12}\left\| \dfrac{w}{|V''-\varsigma|^{\f12}}\right\|_{L^2}\leq C(\alpha\lambda_i)^{-\f12}\big(1+\varsigma^{\f12}(\alpha\lambda_i)^{-1}\big)\|h\|_{L^2}.
\end{align*}
Applying $1/|V''-\varsigma|^{\f12}\geq CM^{\f12}/\big|V'+(M\varsigma)^{\f12}\big|$, and letting $\varsigma\rightarrow 0^{+}$, we deduce that
\begin{align*}
    &\nu^{\f14}(\alpha\lambda_i)^{\f12}\left\|(\partial_Yw,\alpha w)/V'\right\|_{L^2}+\alpha\lambda_i\left\|w/V'\right\|_{L^2}\leq C\|h\|_{L^2},
\end{align*}
which gives the second inequality.
\end{proof}

The following lemma used Rayleigh's trick for  strong concave shear flow.

\begin{lemma}\label{lem:GMMray}
Let $\phi\in H^1_0(\mathbb{R}_+)\cap H^2(\mathbb{R}_+)$ and we denote the Rayleigh operator as $\textsl{R}\phi:= (V-\lambda)(\partial_Y^2-\alpha^2)\phi-V''\phi$. Then we have
  \begin{align}
     & \mathbf{Re} \bigg(\dfrac{1-\lambda}{\mathrm{i}\lambda_i}\int_{0}^{+\infty}(\textsl{R}\phi) \dfrac{\bar{\phi}}{V-\lambda}\mathrm{d}Y\bigg) \geq \|(\partial_Y\phi,\alpha\phi)\|_{L^2}^2+M^{-1} \left\|\dfrac{(1-V)^{\f12}V'\phi}{V-\lambda}\right\|_{L^2}^2.
  \end{align}
 Moreover,  if $\lambda_r\geq 1$, then we have
  \begin{align}
     &-\mathbf{Re} \bigg(\int_{0}^{+\infty}(\textsl{R}\phi) \dfrac{\bar{\phi}}{V-\lambda}\mathrm{d}Y\bigg) \geq \|(\partial_Y\phi,\alpha\phi)\|_{L^2}^2+M^{-1} \left\|\dfrac{(1-V)^{\f12}V'\phi}{V-\lambda}\right\|_{L^2}^2.
  \end{align}
Here $M$ is the constant in third property of the (SC) condition.
\end{lemma}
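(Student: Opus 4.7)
The plan is to compute $\int_0^{+\infty}(\textsl{R}\phi)\bar{\phi}/(V-\lambda)\,dY$ explicitly by splitting off the quadratic Dirichlet part and the potential part, and then extract a nonnegative quadratic form through a judicious choice of complex multiplier that matches the singularity of $1/(V-\lambda)$.

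First, I would substitute the definition of $\textsl{R}\phi$ and integrate by parts the $\partial_Y^2$ piece. Since $\phi(0)=0$ and $\phi\in H^1$, the boundary term vanishes and $V-\lambda$ cancels in that term, yielding
\begin{align*}
Z:=\int_0^{+\infty}(\textsl{R}\phi)\frac{\bar{\phi}}{V-\lambda}\,dY = -\|(\partial_Y\phi,\alpha\phi)\|_{L^2}^2-\int_0^{+\infty}\frac{V''|\phi|^2}{V-\lambda}\,dY.
\end{align*}
Using $1/(V-\lambda)=(V-\lambda_r+\mathrm{i}\lambda_i)/|V-\lambda|^2$ I would read off
\begin{align*}
\mathbf{Re}\,Z=-\|(\partial_Y\phi,\alpha\phi)\|_{L^2}^2-\int_0^{+\infty}\frac{V''(V-\lambda_r)|\phi|^2}{|V-\lambda|^2}\,dY,\qquad \mathbf{Im}\,Z=-\lambda_i\int_0^{+\infty}\frac{V''|\phi|^2}{|V-\lambda|^2}\,dY.
\end{align*}

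For the first inequality, I would exploit the identity $\dfrac{1-\lambda}{\mathrm{i}\lambda_i}=-1-\mathrm{i}\dfrac{1-\lambda_r}{\lambda_i}$ so that
\begin{align*}
\mathbf{Re}\Big(\frac{1-\lambda}{\mathrm{i}\lambda_i}Z\Big) = -\mathbf{Re}\,Z+\frac{1-\lambda_r}{\lambda_i}\mathbf{Im}\,Z = \|(\partial_Y\phi,\alpha\phi)\|_{L^2}^2+\int_0^{+\infty}\frac{V''\big[(V-\lambda_r)-(1-\lambda_r)\big]|\phi|^2}{|V-\lambda|^2}\,dY.
\end{align*}
The bracket collapses miraculously to $V-1$, which is the whole point of the multiplier, giving the integrand $-V''(1-V)|\phi|^2/|V-\lambda|^2$. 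The (SC) condition, together with $0\leq V\leq 1$ (which follows from $V(0)=0$, $V\to 1$ and $V''\leq 0$), yields $-V''(1-V)\geq (V')^2(1-V)/M$, which produces exactly the claimed weighted lower bound.

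For the second inequality, under $\lambda_r\geq 1$ one has $V-\lambda_r\leq V-1\leq 0$ and $V''\leq 0$, so $V''(V-\lambda_r)=(-V'')(\lambda_r-V)\geq (-V'')(1-V)$; plugging into $-\mathbf{Re}\,Z$ and applying $-V''\geq (V')^2/M$ once more finishes the estimate. There is essentially no obstacle once the multiplier $(1-\lambda)/(\mathrm{i}\lambda_i)$ is identified; the only delicate point is to arrange the algebra so the $(V-\lambda_r)$ and $(1-\lambda_r)$ contributions combine into the sign-definite factor $V-1$, after which (SC) and the elementary bounds $0\leq V\leq 1$ do the work.
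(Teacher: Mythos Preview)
Your proposal is correct and follows essentially the same route as the paper: both compute the real and imaginary parts of $\int (\textsl{R}\phi)\bar{\phi}/(V-\lambda)\,dY$ after integrating by parts, combine them via the relation $\mathbf{Re}\big(\frac{1-\lambda}{\mathrm{i}\lambda_i}Z\big)=-\mathbf{Re}\,Z+\frac{1-\lambda_r}{\lambda_i}\mathbf{Im}\,Z$ to collapse $(V-\lambda_r)-(1-\lambda_r)$ into $V-1$, and then invoke the (SC) condition $-V''\ge (V')^2/M$ together with $0\le V\le 1$. The second inequality is handled identically in both, directly from the real-part identity and the sign $V-\lambda_r\le V-1\le 0$.
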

\begin{proof}
  Taking inner product with $\dfrac{-\bar{\phi}}{V-\lambda}$, we get
  \begin{align*}
     &-\int_{0}^{+\infty}(\textsl{R}\phi) \dfrac{\bar{\phi}}{V-\lambda}\mathrm{d}Y= \|(\partial_Y\phi,\alpha\phi)\|_{L^2}^2+\int_{0}^{+\infty} \dfrac{(\partial_Y^2V)|\phi|^2}{V-\lambda}\mathrm{d}Y.
  \end{align*}
 Considering the real and imaginary part respectively, we obtain
  \begin{align}
    -\mathbf{Re}\left(\int_{0}^{+\infty} \dfrac{(\textsl{R}\phi) \bar{\phi}}{V-\lambda}\mathrm{d}Y\right) &=\|(\partial_Y\phi,\alpha\phi)\|_{L^2}^2+\int_{0}^{+\infty} \dfrac{(V-\lambda_r)(\partial_Y^2V)|\phi|^2}{|V-\lambda|^2}\mathrm{d}Y,\label{proi:real}\\
    \mathbf{Im}\left(\int_{0}^{+\infty} \dfrac{(\textsl{R}\phi) \bar{\phi}}{V-\lambda}\mathrm{d}Y\right)&=\lambda_i\int_{0}^{+\infty} \dfrac{(-\partial_Y^2V)|\phi|^2}{|V-\lambda|^2}\mathrm{d}Y.\label{proi:imega}
  \end{align}
 By (SC) condition: $-\partial_Y^2V\geq M^{-1}(\partial_YV)^2$, we get by \eqref{proi:imega} that 
  \begin{align*}
     &-\int_{0}^{+\infty}\dfrac{(V-\lambda_r)(\partial_Y^2V)|\phi|^2}{|V-\lambda|^2}\mathrm{d}Y\\
     &= -\int_{0}^{+\infty}\dfrac{(1-\lambda_r)(\partial_Y^2V)|\phi|^2}{|V-\lambda|^2}\mathrm{d}Y -\int_{0}^{+\infty}\dfrac{(V-1)(\partial_Y^2V)|\phi|^2}{|V-\lambda|^2}\mathrm{d}Y\\
     &= \dfrac{1-\lambda_r}{\lambda_i}\left(\int_{0}^{+\infty} \dfrac{\lambda_i(-\partial_Y^2V)|\phi|^2}{|V-\lambda|^2}\mathrm{d}Y\right) -\int_{0}^{+\infty}\dfrac{(V-1)(\partial_Y^2V)|\phi|^2}{|V-\lambda|^2}\mathrm{d}Y\\
     &\leq\dfrac{1-\lambda_r}{\lambda_i}\mathbf{Im}\left(\int_{0}^{+\infty} \dfrac{(\textsl{R}\phi) \bar{\phi}}{V-\lambda}\mathrm{d}Y\right) -\int_{0}^{+\infty}\dfrac{(1-V)(\partial_YV)^2|\phi|^2}{M|V-\lambda|^2}\mathrm{d}Y\\
     &\leq \dfrac{1-\lambda_r}{\lambda_i}\mathbf{Im}\left(\int_{0}^{+\infty} \dfrac{(\textsl{R}\phi) \bar{\phi}}{V-\lambda}\mathrm{d}Y\right) -M^{-1}\left\|\dfrac{(1-V)^{\f12}(\partial_YV)\phi}{V-\lambda}\right\|_{L^2}^2.
  \end{align*}
 Putting this into \eqref{proi:real}, we obtain
  \begin{align*}
    &\|(\partial_Y\phi,\alpha\phi)\|_{L^2}^2 =- \int_{0}^{+\infty}\dfrac{(V-\lambda_r)(\partial_Y^2V)|\phi|^2}{|V-\lambda|^2}\mathrm{d}Y -\mathbf{Re}\left(\int_{0}^{+\infty} \dfrac{(\textsl{R}\phi) \bar{\phi}}{V-\lambda}\mathrm{d}Y\right)\\
    &\leq \dfrac{1-\lambda_r}{\lambda_i}\mathbf{Im}\left(\int_{0}^{+\infty} \dfrac{(\textsl{R}\phi) \bar{\phi}}{V-\lambda}\mathrm{d}Y\right) -M^{-1}\left\|\dfrac{(1-V)^{\f12}(\partial_YV)\phi}{V-\lambda}\right\|_{L^2}^2 -\mathbf{Re}\left(\int_{0}^{+\infty} \dfrac{(\textsl{R}\phi) \bar{\phi}}{V-\lambda}\mathrm{d}Y\right)\\
    &=\mathbf{Re}\left(\dfrac{1-\lambda_r}{\mathrm{i}\lambda_i}\int_{0}^{+\infty} \dfrac{(\textsl{R}\phi) \bar{\phi}}{V-\lambda}\mathrm{d}Y\right) -\mathbf{Re}\left(\int_{0}^{+\infty} \dfrac{(\textsl{R}\phi) \bar{\phi}}{V-\lambda}\mathrm{d}Y\right) -M^{-1}\left\|\dfrac{(1-V)^{\f12}(\partial_YV)\phi}{V-\lambda}\right\|_{L^2}^2\\
    &=\mathbf{Re}\left(\dfrac{1-\lambda}{\mathrm{i}\lambda_i}\int_{0}^{+\infty} \dfrac{(\textsl{R}\phi) \bar{\phi}}{V-\lambda}\mathrm{d}Y\right) -M^{-1}\left\|\dfrac{(1-V)^{\f12}(\partial_YV)\phi}{V-\lambda}\right\|_{L^2}^2,
  \end{align*}
  This gives the first inequality.

If $\lambda_r\geq 1$, again by \eqref{proi:real} and (SC) condition: $-\partial_Y^2V\geq M^{-1}(\partial_YV)^2$, we have
  \begin{align*}
     -\mathbf{Re}\left(\int_{0}^{+\infty} \dfrac{(\textsl{R}\phi) \bar{\phi}}{V-\lambda}\mathrm{d}Y\right) &=\|(\partial_Y\phi,\alpha\phi)\|_{L^2}^2+\int_{0}^{+\infty} \dfrac{(V-\lambda_r)(\partial_Y^2V)|\phi|^2}{|V-\lambda|^2}\mathrm{d}Y\\
     &\geq \|(\partial_Y\phi,\alpha\phi)\|_{L^2}^2+\int_{0}^{+\infty} \dfrac{(1-V)(-\partial_Y^2V)|\phi|^2}{|V-\lambda|^2}\mathrm{d}Y\\
     &\geq \|(\partial_Y\phi,\alpha\phi)\|_{L^2}^2+M^{-1} \left\|\dfrac{(1-V)^{\f12}V'\phi}{V-\lambda}\right\|_{L^2}^2.
  \end{align*}
  This gives the second inequality.
\end{proof}

 The following lemma has been used in the proof of  Lemma \ref{lem:resB3}.
\begin{lemma}\label{lem:GMMray1}
   Let $\phi\in H^1_0(\mathbb{R}_+)$ solve $\textsl{R}\phi=\tilde{h}:=V'h_1+\partial_Yh_2+\mathrm{i}\alpha h_3$ with $h_i\in L^2(\mathbb{R}_+)$ for $i=1,2,3$. Then it holds that
  \begin{align*}
     & \|(\partial_Y\phi,\alpha\phi)\|_{L^2}\leq C\big(\lambda_i^{-1}\|h_1\|_{L^2} +\lambda_i^{-2}\|(h_2,h_3)\|_{L^2}\big).
  \end{align*}
\end{lemma}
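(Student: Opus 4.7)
The plan is to apply the Rayleigh trick of Lemma~\ref{lem:GMMray} together with the imaginary-part identity obtained in its proof; the crucial observation is that the (SC) condition $(V')^2\leq M(-V'')$ converts that identity into a weighted bound on $V'\phi/(V-\lambda)$ which precisely matches the $V'h_1$ structure of the forcing. Write $A:=\|(\partial_Y\phi,\alpha\phi)\|_{L^2}$ and set $\hat I:=\int_0^{+\infty}\tilde h\bar\phi/(V-\lambda)\,\mathrm{d}Y$.

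First I would invoke Lemma~\ref{lem:GMMray}: its first inequality gives $A^2\leq C\lambda_i^{-1}|\hat I|$ (since $|1-\lambda|/\lambda_i\leq C/\lambda_i$ under $|\lambda|\leq\delta_1^{-1}$). The same computation, upon taking the imaginary part, also produces the identity $\lambda_i\int_0^{+\infty}(-V'')|\phi|^2/|V-\lambda|^2\,\mathrm{d}Y=\mathrm{Im}\,\hat I$, which combined with $(V')^2\leq M(-V'')$ yields
\begin{align*}
\bigl\|V'\phi/(V-\lambda)\bigr\|_{L^2}^2\leq M|\hat I|/\lambda_i.
\end{align*}

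With these two ingredients in hand, I would estimate each piece of $\hat I$ individually. For the $V'h_1$ term a direct Cauchy--Schwarz gives $|\hat I_{h_1}|\leq\|h_1\|_{L^2}\|V'\phi/(V-\lambda)\|_{L^2}\leq C\|h_1\|_{L^2}\sqrt{|\hat I|/\lambda_i}$. For the $\partial_Yh_2$ term, integration by parts (the boundary contributions vanish because $\phi(0)=0$ and $\phi\to 0$ at $+\infty$) produces $-\int h_2\,\partial_Y\bar\phi/(V-\lambda)+\int h_2\,V'\bar\phi/(V-\lambda)^2$, whose absolute values are bounded by $\lambda_i^{-1}\|h_2\|_{L^2}A$ and $\lambda_i^{-1}\|h_2\|_{L^2}\|V'\phi/(V-\lambda)\|_{L^2}\leq C\lambda_i^{-1}\|h_2\|_{L^2}\sqrt{|\hat I|/\lambda_i}$, respectively. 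For the $\mathrm{i}\alpha h_3$ term, the crude estimates $|V-\lambda|\geq\lambda_i$ and $\alpha\|\phi\|_{L^2}\leq A$ give $|\hat I_{h_3}|\leq\lambda_i^{-1}\|h_3\|_{L^2}A$.

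To close the self-referential inequality I would set $B:=\sqrt{|\hat I|/\lambda_i}$, so that $A\leq CB$ from the first step. Summing the three bounds gives $|\hat I|\leq CB\bigl(\|h_1\|_{L^2}+\lambda_i^{-1}\|(h_2,h_3)\|_{L^2}\bigr)$; since $|\hat I|=\lambda_i B^2$, dividing by $B$ yields $\lambda_i B\leq C\bigl(\|h_1\|_{L^2}+\lambda_i^{-1}\|(h_2,h_3)\|_{L^2}\bigr)$, whence $A\leq B\leq C\lambda_i^{-1}\|h_1\|_{L^2}+C\lambda_i^{-2}\|(h_2,h_3)\|_{L^2}$, as claimed. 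The main obstacle is deriving the weighted bound on $V'\phi/(V-\lambda)$: a naive application of Cauchy--Schwarz combined with the Hardy inequality on the $V'h_1$ piece would only yield $\lambda_i^{-2}\|h_1\|_{L^2}$, losing exactly one factor of $\lambda_i$, so the (SC) relation $(V')^2\leq M(-V'')$ must be invoked precisely to transfer the weight into the imaginary-part identity.
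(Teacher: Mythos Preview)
Your proposal is correct and follows essentially the same route as the paper. Both arguments hinge on the same three ingredients: Lemma~\ref{lem:GMMray} to get $A^2\leq C\lambda_i^{-1}|\hat I|$, the imaginary-part identity \eqref{proi:imega} combined with the (SC) condition to control the weighted quantity $\|V'\phi/(V-\lambda)\|_{L^2}$ (equivalently $\|\sqrt{-V''}\phi/(V-\lambda)\|_{L^2}$) by $|\hat I|/\lambda_i$, and integration by parts on the $\partial_Yh_2$ term. The only difference is bookkeeping: the paper first uses the imaginary-part bound to eliminate the weighted norm, obtaining $|\hat I|\leq C\lambda_i^{-1}\|h_1\|^2+C\lambda_i^{-3}\|h_2\|^2+C\lambda_i^{-1}A\|(h_2,h_3)\|$, and then feeds this into $A^2\leq C\lambda_i^{-1}|\hat I|$; you instead introduce $B=\sqrt{|\hat I|/\lambda_i}$, bound every piece of $|\hat I|$ linearly in $B$ (using $A\leq CB$), and close $\lambda_iB^2\leq CB(\cdots)$ by division. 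These are equivalent rearrangements of the same self-referential inequality.
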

\begin{proof}
Notice that
 \begin{align*}
     &\left|\int_{0}^{+\infty}\dfrac{\tilde{h}\bar{\phi}}{V-\lambda}\mathrm{d}Y \right|=\left|\int_{0}^{+\infty}\dfrac{\big( V'h_1+\partial_Yh_2+\mathrm{i}\alpha h_3\big)\bar{\phi}}{V-\lambda}\mathrm{d}Y\right|\\
    &\leq \left|\int_{0}^{+\infty} \bigg(V'h_1+\dfrac{V'h_2}{V-\lambda}\bigg)\dfrac{\bar{\phi}}{V-\lambda}\mathrm{d}Y\right| +\left| \int_{0}^{+\infty}\dfrac{h_2\partial_Y\bar{\phi}}{V-\lambda}\mathrm{d}Y\right| +\left| \int_{0}^{+\infty}\dfrac{ \alpha h_3\bar{\phi}}{V-\lambda}\mathrm{d}Y\right|\\
    &\leq \left\|\dfrac{\sqrt{-V''}\phi}{V-\lambda}\right\|_{L^2}\left( \left\|\dfrac{V'h_1}{\sqrt{-V''}}\right\|_{L^2} +\left\|\dfrac{V'h_2}{\sqrt{-V''}(V-\lambda)}\right\|_{L^2}\right) +\|\partial_Y\phi\|_{L^2}\left\|\dfrac{h_2}{V-\lambda}\right\|_{L^2} \\&\quad+\|\alpha\phi\|_{L^2}\left\|\dfrac{h_3}{V-\lambda}\right\|_{L^2}\\
    &\leq M^{\f12}\left\|\dfrac{\sqrt{-V''}\phi}{V-\lambda}\right\|_{L^2}\left( \|h_1\|_{L^2} +\left\|\dfrac{h_2}{(V-\lambda)}\right\|_{L^2}\right) +C\lambda_i^{-1}\|(\partial_Y\phi,\alpha\phi)\|_{L^2}\|(h_2,h_3)\|_{L^2}.
  \end{align*}
 Here we used the strong concave condition. And \eqref{proi:imega} gives
  \begin{align*}
     &\left|\int_{0}^{+\infty}\dfrac{\tilde{h}\bar{\phi}}{V-\lambda}\mathrm{d}Y \right|\geq \mathbf{Im}\left(\int_{0}^{+\infty}\dfrac{\tilde{h}\bar{\phi}}{V-\lambda}\mathrm{d}Y \right)= \lambda_i\left\|\dfrac{\sqrt{-V''}\phi}{V-\lambda}\right\|_{L^2}^2.
  \end{align*}
  Then we obtain
  \begin{align*}
   \left|\int_{0}^{+\infty}\dfrac{\tilde{h}\bar{\phi}}{V-\lambda}\mathrm{d}Y\right|&\leq C\lambda_i^{-1}M\bigg( \|h_1\|_{L^2} +\left\|\dfrac{h_2}{(V-\lambda)}\right\|_{L^2}\bigg)^2 +C\lambda_i^{-1}\|(\partial_Y\phi,\alpha\phi)\|_{L^2}\|(h_2,h_3)\|_{L^2}\\
   &\leq C\lambda_i^{-1}\|h_1\|_{L^2}^2+ C\lambda_i^{-3}\|h_2\|_{L^2}^2+C\lambda_i^{-1}\|(\partial_Y\phi,\alpha\phi)\|_{L^2} \|(h_2,h_3)\|_{L^2}.
  \end{align*}
  By lemma \ref{lem:GMMray} and $|\lambda|\leq \delta_1^{-1}$, we get
 \begin{align*}
    \|(\partial_Y\phi,\alpha\phi)\|_{L^2}^2\leq& C\lambda_i^{-1} \left|\int_{0}^{+\infty}\dfrac{\tilde{h}\bar{\phi}}{V-\lambda}\mathrm{d}Y\right|\\
    \leq &C\lambda_i^{-1}\big(\lambda_i^{-1}\|h_1\|_{L^2}^2+ \lambda_i^{-4}|\lambda|\|h_2\|_{L^2}^2+\lambda_i^{-1}\|(\partial_Y\phi,\alpha\phi)\|_{L^2} \|(h_2,h_3)\|_{L^2}\big),
 \end{align*}
 which gives
 \begin{align*}
    &\|(\partial_Y\phi,\alpha\phi)\|_{L^2}\leq C\big(\lambda_i^{-1}\|h_1\|_{L^2} +\lambda_i^{-2}\|(h_2,h_3)\|_{L^2}\big).
 \end{align*}
 This proves the lemma.
 \end{proof}

\subsubsection{ Boundary layer corrector}
Since we have obtained  the resolvent estimate under the Navier-slip boundary condition, the  next step is to re-correct the boundary condition from Navier-slip boundary condition to nonslip one. To this end,  we introduce the boundary layer corrector, which  is the solution to the following homogeneous system
\begin{align}\label{eq:Hombound-OSnon}
\left\{\begin{aligned}
&-\sqrt{\nu}(\partial_Y^2-\alpha^2)W_b+\mathrm{i}\alpha\big((V-\lambda)W_b- V''\Phi_b\big)=0,\\
&(\partial_Y^2-\alpha^2)\Phi_b=W_b,\\
&\Phi_b|_{Y=0}=0,\ \partial_Y\Phi_b|_{Y=0}=1.
\end{aligned}\right.
\end{align}

Instead of considering the above system directly, we first pay our attention to study the system as follows
\begin{align}\label{eq:Hombound-OS}
\left\{\begin{aligned}
&-\sqrt{\nu}(\partial_Y^2-\alpha^2)W+\mathrm{i}\alpha\big((V-\lambda)W- V''\Phi\big)=0,\\
&(\partial_Y^2-\alpha^2)\Phi=W,\\
&\Phi|_{Y=0}=0,\ \partial_Y^2\Phi|_{Y=0}=1.
\end{aligned}\right.
\end{align}

The reason we consider this system is that
\begin{itemize}
\item The solution $W$ to  \reff{eq:Hombound-OS} actually is a small perturbation around the Airy function, which is known very well by us. Moreover, the corresponding perturbation satisfies the Navier-slip condition, on which we can apply the result in the previous part.

\item We observe that $\partial_Y\Phi$ on the boundary $Y=0$ is also positive. Hence, to drive the estimates from \reff{eq:Hombound-OS} to \reff{eq:Hombound-OSnon}, we just need to normalized the value of $\partial_Y\Phi$ on the boundary.
\end{itemize}

For convenience, we introduce the following notation
\begin{align}\label{notation:A,A1}
   &A=|n|^{\f13}(1+|n|^{\f13}|\lambda_{\nu}|)^{\f12}.
\end{align}
We first present the following lemma, which will be used frequently.

\begin{lemma}\label{lem:L-index}
Let $\delta_0^{-1}\nu^{\f12}\leq \alpha\leq \delta_0^{-1}\nu^{-\f14}$ with $\alpha=\sqrt{\nu}n$, $\lambda_i\geq \dfrac{n^{\gamma-1}}{\delta}$ for some $\gamma\in[\f23,1]$. Then it holds that
  \begin{align*}
     & \max\big(\delta_0^{\f23}\alpha,\delta_0^{-\f13}\big) \leq |n|^{\f13}\quad \mathrm{and}\quad  |n|^{-\f13}\leq C(1+\alpha)^{-1}.
  \end{align*}
  Moreover, we have
  \begin{align*}
    &|n|^{\f13}\lambda_i\geq \delta_0^{-(\gamma-2/3)}\delta^{-1}\geq \delta^{-1}.
  \end{align*}
\end{lemma}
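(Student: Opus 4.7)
The plan is to unfold the definitions and chase the inequalities; this is essentially a bookkeeping lemma so the strategy is just careful algebra rather than any clever idea.

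For the first claim, I would start from the two given bounds on $\alpha = \sqrt{\nu}n$. The upper bound $\alpha \leq \delta_0^{-1}\nu^{-1/4}$ rewrites (using $\nu = \alpha^2/n^2$) as $\alpha \leq \delta_0^{-1}(\alpha^2/n^2)^{-1/4} = \delta_0^{-1}(n/\sqrt{\alpha})^{1/2} \cdot \ldots$; more cleanly, squaring and rearranging gives $\nu^{3/4} \leq \delta_0^{-1} n^{-1}$, hence $\sqrt{\nu} \leq \delta_0^{-2/3}n^{-2/3}$, so $\delta_0^{2/3}\alpha = \delta_0^{2/3}\sqrt{\nu}\,n \leq n^{1/3}$. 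The lower bound $\alpha \geq \delta_0^{-1}\nu^{1/2}$ reads $\sqrt{\nu}\,n \geq \delta_0^{-1}\sqrt{\nu}$, i.e.\ $n \geq \delta_0^{-1}$, so $n^{1/3} \geq \delta_0^{-1/3}$. Taking the max finishes the first bound.

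For the second claim, I would just combine the two halves of the first bound: from $n^{1/3} \geq \delta_0^{2/3}\alpha$ and $n^{1/3} \geq \delta_0^{-1/3}$ one gets
\[
n^{1/3} \geq \tfrac{1}{2}\bigl(\delta_0^{2/3}\alpha + \delta_0^{-1/3}\bigr) \geq c_{\delta_0}(1+\alpha),
\]
which upon inversion yields $n^{-1/3} \leq C(1+\alpha)^{-1}$ with $C$ depending on $\delta_0$.

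For the third claim I would plug the hypothesis $\lambda_i \geq n^{\gamma-1}/\delta$ in directly:
\[
n^{1/3}\lambda_i \geq n^{1/3}\cdot \frac{n^{\gamma-1}}{\delta} = \frac{n^{\gamma-2/3}}{\delta}.
\]
Since $\gamma \geq 2/3$ and $n \geq \delta_0^{-1}$ (from the already-derived lower bound on $n$), and $\delta_0 \leq 1$, we have $n^{\gamma-2/3} \geq \delta_0^{-(\gamma-2/3)} \geq 1$, so $n^{1/3}\lambda_i \geq \delta_0^{-(\gamma-2/3)}\delta^{-1} \geq \delta^{-1}$.

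There is no real obstacle here: the lemma simply records the elementary inequalities that the constraints $\alpha \in [\delta_0^{-1}\sqrt{\nu},\, \delta_0^{-1}\nu^{-1/4}]$ and $\lambda_i \geq n^{\gamma-1}/\delta$ with $\gamma \geq 2/3$ impose on the Airy scale $|n|^{1/3}$, which will be used repeatedly in the subsequent boundary-layer corrector estimates. The only point requiring attention is keeping track of the exponents of $\delta_0$ correctly when converting between $\alpha$, $n$, and $\nu$.
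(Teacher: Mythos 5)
Your proof is correct and follows essentially the same route as the paper: the upper bound $\alpha\le\delta_0^{-1}\nu^{-1/4}$ gives $\delta_0^{2/3}\alpha\le |n|^{1/3}$, the lower bound gives $|n|\ge\delta_0^{-1}$, and the third inequality is just the hypothesis $\lambda_i\ge |n|^{\gamma-1}/\delta$ combined with $|n|\ge\delta_0^{-1}$ and $\gamma\ge 2/3$. Your explicit derivation of $|n|^{-1/3}\le C(1+\alpha)^{-1}$ from the two halves of the first bound is also fine (the paper leaves this step implicit), so there is nothing to correct.
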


\begin{proof}
Due to $\alpha \leq \delta_0^{-1}\nu^{-\f14}$, $\alpha = |n|^{\f13}\big|\sqrt{\nu}\alpha^2\big|^{\f13}\leq |n|^\f13\delta_0^{-\f23}$. Due to $\delta_0^{-1}\nu^{\f12}\leq \alpha$,  $|n|^{\f13}\geq \delta_0^{-\f13}$. This deduces the first inequality. We also have
\begin{align*}
   &|n|^{\f13}\lambda_i\geq \dfrac{\alpha^{\f13}}{\nu^{\f16}}\dfrac{\nu^{(1-\gamma)/2} \alpha^{\gamma-1}}{\delta}= \nu^{(2/3-\gamma)/2}\alpha^{\gamma-2/3}\delta^{-1} = \big( \alpha/\sqrt{\nu}\big)^{\gamma-2/3}\delta^{-1}\geq \delta_0^{-(\gamma-2/3)}\delta^{-1},
\end{align*}
which gives the third inequality.
\end{proof}

Now we construct the solution $W$ to \reff{eq:Hombound-OS} via the Airy function.
We denote $d=-\lambda_\nu/V'(0)$, where $\lambda_\nu=\lambda+\mathrm{i}\sqrt{\nu}\alpha$. We introduce
  \begin{align*}
     & W_a (Y)= Ai\big(\mathrm{e}^{\mathrm{i}\frac{\pi}{6}}|nV'(0)|^{\f13}(Y+d)\big)/ Ai\big(\mathrm{e}^{\mathrm{i}\frac{\pi}{6}}|nV'(0)|^{\f13}d\big).
  \end{align*}
  here $Ai(y)$ is the Airy function defined in the appendix, which satisfies $Ai''(y)-yAi(y)=0$.
  Then we have
  \begin{align*}
     \partial_Y^2W_a&=\mathrm{e}^{\mathrm{i}\frac{\pi}{3}} |nV'(0)|^{\f23}(\partial_Y^2Ai)\big(\mathrm{e}^{\mathrm{i}\frac{\pi}{6}}|nV'(0)|^{\f13}(Y+d)\big)/Ai\big(\mathrm{e}^{\mathrm{i}\frac{\pi}{6}}|nV'(0)|^{\f13}d\big) \\
     &=\mathrm{i} |nV'(0)|^{\f23}(|nV'(0)|^{\f13}(Y+d))Ai\big(\mathrm{e}^{\mathrm{i}\frac{\pi}{6}}|nV'(0)|^{\f13}(Y+d)\big)/Ai\big(\mathrm{e}^{\mathrm{i}\frac{\pi}{6}}|nV'(0)|^{\f13}\tilde{d}\big)\\
     &=\mathrm{i}(\alpha/\sqrt{\nu}) \big(V'(0)(Y+d)\big)W_a,
  \end{align*}
  which gives
  \begin{align}\left\{\begin{aligned}
     &-\sqrt{\nu}(\partial_Y^2-\alpha^2)W_a+\mathrm{i}\alpha \big(V'(0)Y-\lambda\big)W_a=0,\\
     &W_a(0)=1.
     \end{aligned}\right.
  \end{align}
We denote the perturbation  $W_e=W-W_a$, which satisfies
  \begin{align}\label{eq:We}\left\{\begin{aligned}
     &-\sqrt{\nu}(\partial_Y^2-\alpha^2)W_e+\mathrm{i}\alpha\big((V-\lambda)W_e-V''\Phi_e\big) =-\mathrm{i}\alpha\big(V-V'(0)Y\big)W_a+\mathrm{i}\alpha V''\Phi_a,\\
     &(\partial_Y^2-\alpha^2)\Phi_e=W_e,\ (\partial_Y^2-\alpha^2)\Phi_a=W_a,\\
     &\Phi_a(0)=\Phi_e(0)=0,\ W_e(0)=0.
     \end{aligned}\right.
  \end{align}
We point out that $W_e$ satisfies the Navier-slip boundary condition. As a consequence, we have the following lemma.

\begin{lemma}\label{lem:Webounds}
  Let $W_e$ solve \eqref{eq:We}. Then it holds that
  \begin{align*}
     & \nu^{\f14}\alpha^{\f12}\lambda_i^{-\f12}\|W_{e}\|_{L^2} +\alpha\lambda_i\|(\partial_Y\Phi_{e},\alpha\Phi_{e})\|_{L^2} \leq C\alpha \lambda_i^{-1}A^{-\f72}.
  \end{align*}
 Moreover,  we have
  \begin{align*}
     & \|\partial_Y\Phi_e\|_{L^\infty}\leq C|n|^{-\f13}(A\lambda_i)^{-\f74}.
  \end{align*}
\end{lemma}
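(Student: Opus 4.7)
The plan is to view \eqref{eq:We} as a Navier-slip Orr--Sommerfeld resolvent problem in the sense of Proposition \ref{pro:B3resH-1}, whose source term is the prescribed expression $-\mathrm{i}\alpha(V-V'(0)Y)W_a + \mathrm{i}\alpha V''\Phi_a$. To apply that proposition I must rewrite this source in divergence form $-\partial_Y F_1 + \mathrm{i}\alpha F_2$ and bound $\|(F_1, F_2)\|_{L^2}$ by $C\alpha A^{-7/2}$. The natural choice is to absorb the first term into an antiderivative and to leave the second term unchanged, namely
\[
F_1(Y) := -\mathrm{i}\alpha\int_Y^{+\infty}(V(s) - V'(0)s)W_a(s)\,ds,\qquad F_2 := V''\Phi_a,
\]
which converts the term with the fastest Airy-type decay into one with small $L^2$ norm. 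Proposition \ref{pro:B3resH-1} then yields
\[
\nu^{\frac14}\alpha^{\frac12}\lambda_i^{-\frac12}\|W_e\|_{L^2} + \alpha\lambda_i\|(\partial_Y\Phi_e,\alpha\Phi_e)\|_{L^2} \leq C\lambda_i^{-1}\|(F_1,F_2)\|_{L^2}.
\]

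For $\|F_1\|_{L^2}$ I would use the Taylor bound $|V(Y) - V'(0)Y|\leq CY^2/(1+Y)$ together with the scaled-Airy localization of $W_a$ on the window $\{Y\lesssim A^{-1}\}$; integrating from $Y$ to $+\infty$ and squaring yields a pointwise bound of the form $|F_1(Y)|\leq C\alpha A^{-3}(1+(AY)^2)e^{-cAY}$, and hence $\|F_1\|_{L^2}\leq C\alpha A^{-7/2}$. The estimate of $\|F_2\|_{L^2} = \|V''\Phi_a\|_{L^2}$ is more delicate: one represents $\Phi_a$ through the Dirichlet Green's function for $\partial_Y^2 - \alpha^2$ on $\mathbb{R}_+$ and uses the boundary layer pointwise structure $|\Phi_a(Y)|\lesssim |\partial_Y\Phi_a(0)|\,Y$ on $\{Y\leq A^{-1}\}$ together with the exponential tail $|\Phi_a(Y)|\lesssim e^{-\alpha Y}\bigl|\int_0^\infty s\,W_a(s)\,ds\bigr|$ on $\{Y\geq A^{-1}\}$. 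The sharp control of the Airy integrals $\partial_Y\Phi_a(0) = -\int_0^\infty e^{-\alpha s}W_a(s)\,ds$ and $\int_0^\infty s\,W_a(s)\,ds$ in terms of $A$ comes from the identity $\int u\,Ai(u)\,du = Ai'(u)$ together with the standard asymptotics $Ai'(u_0)/Ai(u_0)\sim -u_0^{1/2}$ and $\int_{u_0}^\infty Ai(t)\,dt\sim Ai(u_0)/u_0$ at large $|u_0|$. Combined with $|V''(Y)|\leq C/(1+Y)^2$ from the (SC) condition, this yields $\|F_2\|_{L^2}\leq C\alpha A^{-7/2}$, and the first asserted inequality follows.

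For the $L^\infty$ bound I would apply the one-dimensional Sobolev inequality
\[
\|\partial_Y\Phi_e\|_{L^\infty}^2 \leq 2\|\partial_Y\Phi_e\|_{L^2}\|\partial_Y^2\Phi_e\|_{L^2},
\]
combined with $\partial_Y^2\Phi_e = W_e + \alpha^2\Phi_e$ to bound the second factor by $\|W_e\|_{L^2} + \alpha\|\alpha\Phi_e\|_{L^2}$. Inserting the $L^2$ estimates from the first inequality gives $\|\partial_Y\Phi_e\|_{L^\infty}^2\leq C\lambda_i^{-2}A^{-7/2}\cdot\bigl(\nu^{-1/4}\alpha^{1/2}\lambda_i^{-1/2}+\alpha\lambda_i^{-2}\bigr)A^{-7/2}$. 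Under the standing assumptions $\gamma\in[2/3,1]$, $|n|\leq\delta_0^{-1}\nu^{-3/4}$, and $|n|^{\gamma-1}/\delta\leq\lambda_i\leq\delta_1^{-1}$, a case analysis together with the identities $\nu^{-1/8}\alpha^{1/4} = |n|^{1/4}$ and $\alpha^{1/2} = \nu^{1/4}|n|^{1/2}$, plus the lower bound $A\geq |n|^{1/3}$, simplifies the right-hand side to $C|n|^{-1/3}(A\lambda_i)^{-7/4}$, which is the second inequality.

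The main technical obstacle is the sharp bound $\|V''\Phi_a\|_{L^2}\leq C\alpha A^{-7/2}$: naive estimates based on $\|\Phi_a\|_{L^2}\leq C\alpha^{-2}\|W_a\|_{L^2}$ fall short by positive powers of $A/\alpha$, which is not uniformly bounded above under our parameter range. The improvement requires carefully tracking both the linear vanishing of $\Phi_a$ at $Y = 0$ with a small slope given by an explicit Airy integral and the subsequent $e^{-\alpha Y}$ tail, weighted against the $(1+Y)^{-2}$ decay of $V''$.
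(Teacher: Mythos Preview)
Your plan to apply Proposition \ref{pro:B3resH-1} is exactly right, and your $L^\infty$ argument is essentially the one in the paper. The gap is in the divergence-form decomposition of the source: setting $F_2=V''\Phi_a$ cannot give $\|F_2\|_{L^2}\le C\alpha A^{-7/2}$ across the whole parameter range. On the boundary-layer window $Y\in(0,A^{-1})$ one has $\Phi_a(Y)\approx \partial_Y\Phi_a(0)\,Y$ with $|\partial_Y\Phi_a(0)|\sim A^{-1}$ (Lemmas \ref{lem:Airy-w} and \ref{lem:Airy-bound}) while $|V''|\sim|V''(0)|>0$, so
\[
\|V''\Phi_a\|_{L^2(0,A^{-1})}^2\;\gtrsim\;|V''(0)|^2A^{-2}\!\int_0^{A^{-1}}\!Y^2\,dY\;\sim\;A^{-5},
\]
hence $\|V''\Phi_a\|_{L^2}\gtrsim A^{-5/2}$. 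Requiring $A^{-5/2}\le C\alpha A^{-7/2}$ means $A\le C\alpha$, which fails badly e.g.\ at $|n|\sim\delta_0^{-1}$, where $A\sim 1$ but $\alpha\sim\nu^{1/2}$. So the ``main technical obstacle'' you flag is not merely delicate---it is an actual obstruction: no amount of tail analysis or Airy asymptotics can repair a lower bound coming from the bulk of $\Phi_a$.

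The paper avoids this by an algebraic cancellation. Writing $W_a=(\partial_Y^2-\alpha^2)\Phi_a$ and integrating by parts in $Y$ gives
\[
(V-V'(0)Y)W_a=\partial_Y\!\big[(V-V'(0)Y)\partial_Y\Phi_a\big]-\partial_Y\!\big[(V'-V'(0))\Phi_a\big]+V''\Phi_a-\alpha^2(V-V'(0)Y)\Phi_a,
\]
so the term $-\mathrm{i}\alpha V''\Phi_a$ produced here \emph{cancels} the $+\mathrm{i}\alpha V''\Phi_a$ in the source. What remains is
\[
F_1=\mathrm{i}\alpha\big[(V-V'(0)Y)\partial_Y\Phi_a-(V'-V'(0))\Phi_a\big],\qquad F_2=\alpha^2(V-V'(0)Y)\Phi_a,
\]
and now $|V-V'(0)Y|\le CY^2$, $|V'-V'(0)|\le CY$ together with the weighted Airy bounds $\|Y^2\partial_Y\Phi_a\|_{L^2}+\|Y\Phi_a\|_{L^2}\le CA^{-7/2}$ from Lemma \ref{lem:Airy-w} give $\|(F_1,F_2)\|_{L^2}\le C\alpha A^{-7/2}$ directly. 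Once you make this substitution, the rest of your argument goes through unchanged.
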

\begin{proof}
We first notice that $W_a$ satisfies
\begin{align*}
   & \big(V-V'(0)Y\big)W_a\\
   &=\partial_Y\big[(V-V'(0)Y)\partial_Y\Phi_a\big] -\partial_Y\big[(V'-V'(0))\Phi_a\big]+V''\Phi_a-\alpha^2(V-V'(0)Y)\Phi_a.
\end{align*}
Then we have
\begin{align*}
   & -\mathrm{i}\alpha\big(V-V'(0)Y\big)W_a+\mathrm{i}\alpha V''\Phi_a\\
   &=-\mathrm{i}\alpha \partial_Y\left[(V-V'(0)Y)\partial_Y\Phi_a -(V'-V'(0))\Phi_a\right]+\mathrm{i}\alpha^3(V-V'(0)Y)\Phi_a\\
   &=-\partial_YF_{1,1}+\mathrm{i}\alpha F_{1,2},
\end{align*}
where
\begin{align*}
   &F_{1,1}=\mathrm{i}\alpha\left[(V-V'(0)Y)\partial_Y\Phi_a -(V'-V'(0))\Phi_a\right],\\
   &F_{1,2}=\alpha^2(V-V'(0)Y)\Phi_a.
\end{align*}
  Since we have
  \begin{align*}
     & |V(Y)-V'(0)Y|=\left|\int_{0}^{Y}\int_{0}^{Z}V''(Z_1) \mathrm{d}Z_1\mathrm{d}Z\right|\leq Y^2\|V''\|_{L^\infty}/2\leq CY^2,\\
     & |V'(Y)-V'(0)|=\left|\int_{0}^{Y}V''(Z)\mathrm{d}Z\right|\leq Y\|V''\|_{L^\infty}\leq CY,
  \end{align*}
  we infer that
  \begin{align*}
     & |F_{1,1}(Y)|\leq C\alpha\big( \big|Y^2\partial_Y\Phi_a\big|+ \big|Y\Phi_a\big|\big),\qquad |F_{1,2}(Y)|\leq C\alpha^2\big|Y^2\Phi_a\big|.
  \end{align*}
 Applying Lemma \ref{lem:Airy-w} with $\kappa=|nV'(0)|^{\f13},\ \eta=-\lambda_{\nu}/V'(0)$(so, $\kappa(1+|\kappa\eta|)^{\f12}\sim A$), we get by  Lemma \ref{lem:L-index} that 
  \begin{align*}
     \|(F_{1,1},F_{1,2})\|_{L^2}\leq& C\alpha \left(\big\|Y^2\partial_Y\Phi_a\big\|_{L^2}+ \big\|Y\Phi_a\big\|_{L^2} + \alpha\big\|Y^2\Phi_a\big\|_{L^2}\right)\\
     \leq &C\alpha \left(A^{-\f72}+\alpha A^{-\f92}\right)=C\alpha A^{-\f72}\big(1+\alpha A^{-1}\big)\\
     \leq& C\alpha A^{-\f72}\big(1+\alpha |n|^{-\f13}\big)\leq C\alpha A^{-\f72}.
  \end{align*}

    Now we come to estimate $W_{e}$. We know that $(W_{e},\Phi_{e},F_{1,1},F_{1,2})$ fits the structure of \eqref{eq:reswNa1}. Then by Proposition \ref{pro:B3resH-1}, we get
  \begin{align*}
     & \nu^{\f14}\alpha^{\f12}\lambda_i^{-\f12}\|W_{e}\|_{L^2} +\alpha\lambda_i\|(\partial_Y\Phi_{e},\alpha\Phi_{e})\|_{L^2} \leq C\lambda_i^{-1}\big\|(F_{1,1},F_{1,2})\big\|_{L^2}\leq C\alpha A^{-\f72}\lambda_i^{-1}.
  \end{align*}
  This gives the first inequality. And we have
  \begin{align*}
     A\|\partial_Y\Phi_{e}\|_{L^\infty}&\leq CA\|W_{e}\|_{L^2}^{\f12}\|(\partial_Y\Phi_{e},\alpha\Phi_{e})\|_{L^2}^{\f12}\\
    &\leq C \alpha A^{-\f52}\lambda_i^{-1}\nu^{-\f18}\alpha^{-\f34}\lambda_i^{-\f14} =C(|n|^{\f13}/A)^{\f34}\lambda_i^{\f12}(A\lambda_i)^{-\f74}\leq C(A\lambda_i)^{-\f74}.
  \end{align*}
  Here we used $|n|^{\f13}/A\leq 1$, $\lambda_i\leq \delta_1^{-1}$.
 \end{proof}

Combining with Lemma \ref{lem:Webounds}, and Lemma \ref{lem:Airy-w}, we get
\begin{lemma}\label{lem:Wnorms}
  Let $W$ solve \eqref{eq:Hombound-OS}. Then it holds that
   \begin{align*}
      & \|(\partial_Y\Phi,\alpha\Phi)\|_{L^2}\leq CA^{-\f32},\quad 
   \|W\|_{L^2} \leq A^{-\f12},\quad  \|\rho^{\f12}_\lambda W\|_{L^2}\le C|n|^{\f14}\lambda_i^{\f34}A^{-1},
\end{align*}
where $\rho_\lambda$ is defined by 
\begin{align}\nonumber
     \rho_{\lambda}(Y)=\left\{\begin{aligned}
     &(|n|^{\f13}\lambda_i)^{\f32} Y,\qquad \text{if}\,\,\, 0\leq Y\leq (|n|^{\f13}\lambda_i)^{-\f32},\\
     &1,\qquad\qquad\quad \text{if}\,\,\,Y\geq (|n|^{\f13}\lambda_i)^{-\f32}.
     \end{aligned}\right.
  \end{align}
\end{lemma}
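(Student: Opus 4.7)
The plan is to use the decomposition $W = W_a + W_e$ and $\Phi = \Phi_a + \Phi_e$ that was set up just before the statement, and then apply the triangle inequality together with the Airy-function estimates (Lemma \ref{lem:Airy-w}, invoked with $\kappa = |nV'(0)|^{1/3}$ and $\eta = -\lambda_\nu/V'(0)$, so that $\kappa(1+|\kappa\eta|)^{1/2} \sim A$) for the principal part, and Lemma \ref{lem:Webounds} for the error. Thus all three estimates reduce to checking that the $W_e$-contribution is dominated by the $W_a$-contribution in the relevant size.

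For $\|(\partial_Y\Phi,\alpha\Phi)\|_{L^2}$, the Airy bound should give $\|(\partial_Y\Phi_a,\alpha\Phi_a)\|_{L^2}\le CA^{-3/2}$, while Lemma \ref{lem:Webounds} yields $\|(\partial_Y\Phi_e,\alpha\Phi_e)\|_{L^2}\le C\alpha\lambda_i^{-2}A^{-7/2}$. So I need to verify $\alpha\lambda_i^{-2}\le CA^2$. Since $A^2\ge |n|^{2/3}(1+|n|^{1/3}\lambda_i)\ge |n|\lambda_i$ (using $|\lambda_\nu|\ge\lambda_i$), and using $\alpha=\sqrt{\nu}|n|$ together with $|n|^{1/3}\lambda_i\ge\delta^{-1}$ from Lemma \ref{lem:L-index}, the required inequality follows after algebraic manipulation. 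Similarly, for $\|W\|_{L^2}$, the Airy bound is $\|W_a\|_{L^2}\le CA^{-1/2}$, and Lemma \ref{lem:Webounds} controls the error as $\|W_e\|_{L^2}\le C\nu^{-1/4}\alpha^{1/2}\lambda_i^{-1/2}A^{-7/2}$; a direct computation using $\nu^{-1/4}\alpha^{1/2}=|n|^{1/2}$ and $A^3\ge |n|^{4/3}\lambda_i$ reduces the matter to $|n|^{5/6}\lambda_i^{3/2}\ge 1$, which is immediate from $|n|^{1/3}\lambda_i\ge\delta^{-1}$.

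For the weighted estimate $\|\rho_\lambda^{1/2} W\|_{L^2}\le C|n|^{1/4}\lambda_i^{3/4}A^{-1}$, I split again and bound $\|\rho_\lambda^{1/2} W_a\|_{L^2}$ directly via Lemma \ref{lem:Airy-w} (this is the sharp contribution, since the weight $\rho_\lambda$ is tuned to the Airy critical layer of thickness $(|n|^{1/3}\lambda_i)^{-3/2}$), while for the error I simply use $\rho_\lambda\le 1$ so that $\|\rho_\lambda^{1/2} W_e\|_{L^2}\le\|W_e\|_{L^2}$, and then check that the latter is absorbed into $C|n|^{1/4}\lambda_i^{3/4}A^{-1}$ using the same kind of size relations as above.

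The main obstacle I anticipate is the bookkeeping in the weighted estimate: one must exploit the fact that the Airy approximation $W_a$ is concentrated in a critical layer of width $\sim(|n|^{1/3}\lambda_i)^{-3/2}$, matched exactly by the definition of $\rho_\lambda$, so that the weighted $L^2$-norm of $W_a$ gains the precise factor $|n|^{1/4}\lambda_i^{3/4}$ rather than just the crude pointwise $L^\infty$ bound $\|W_a\|_{L^\infty}\le 1$. All other steps are essentially triangle inequalities combined with the two previous lemmas and the algebraic comparisons provided by Lemma \ref{lem:L-index}.
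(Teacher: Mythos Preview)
Your approach is essentially identical to the paper's: decompose $W=W_a+W_e$, use the Airy estimates (Lemma \ref{lem:Airy-w}) for the main part, Lemma \ref{lem:Webounds} for the error, and check via Lemma \ref{lem:L-index} that the error is subordinate. The weighted estimate is handled exactly as you describe, with $\rho_\lambda\le 1$ on $W_e$ and the pointwise Airy decay giving the sharp bound on $\|\rho_\lambda^{1/2}W_a\|_{L^2}$.

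One arithmetic slip: from Lemma \ref{lem:Webounds} the correct bound is $\|(\partial_Y\Phi_e,\alpha\Phi_e)\|_{L^2}\le C\lambda_i^{-2}A^{-7/2}$, \emph{without} the extra factor of $\alpha$ you wrote (divide $C\alpha\lambda_i^{-1}A^{-7/2}$ by the prefactor $\alpha\lambda_i$, and the $\alpha$'s cancel). Consequently the inequality you actually need is $\lambda_i^{-2}\le CA^2$, i.e.\ $(A\lambda_i)^{-2}\le C$, which is immediate from $A\lambda_i\ge |n|^{1/3}\lambda_i\ge\delta^{-1}$. Your stated inequality $\alpha\lambda_i^{-2}\le CA^2$ is stronger and in fact does \emph{not} follow from the available size relations when $\alpha$ is large (recall $\alpha$ can be as big as $\delta_0^{-1}\nu^{-1/4}$), so the ``algebraic manipulation'' you allude to there would not close. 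With this correction your argument goes through and coincides with the paper's proof.
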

\begin{proof}
 From Lemma \ref{lem:Airy-w} and Lemma \ref{lem:Webounds}, we deduce that
  \begin{align*}
     \|(\partial_Y\Phi,\alpha\Phi)\|_{L^2} \leq&\|(\partial_Y\Phi_a,\alpha\Phi_a)\|_{L^2}+ \|(\partial_Y\Phi_e,\alpha\Phi_e)\|_{L^2}\\
     \leq &CA^{-\f32}+C\lambda_i^{-2}A^{-\f72}\leq CA^{-\f32}\big(1+(A\lambda_i)^{-2}\big)\\
     \leq & CA^{-\f32}\big(1+(|n|^{\f13}\lambda_i)^{-2}\big)\leq  CA^{-\f32},
  \end{align*}
  here we used $(A\lambda_i)^{-2}\leq (|n|^{\f13}\lambda_i)^{-2}\leq \delta^{2}\leq 1$.  This gives the first inequality. Applying Lemma \ref{lem:Airy-w} and Lemma \ref{lem:Webounds} again, we infer that
  \begin{align*}
     \|W\|_{L^2}\leq& \|W_a\|_{L^2}+\|W_e\|_{L^2}\leq CA^{-\f12}+C\nu^{-\f14}\alpha^{-\f12}\lambda_i^{\f12}\alpha A^{-\f72}\lambda_i^{-1}\\
     \leq &CA^{-\f12}\big(1+(|n|^\f13/A)^{\f32}(A\lambda_i)^{-\f32}\lambda_i\big)\leq CA^{-\f12},
  \end{align*}
provide that  $(|n|^{\f13}/A)\leq 1$, $(A\lambda_i)^{-\f32}\leq (|n|^{\f13}\lambda_i)^{-\f32}\leq \delta^{\f32}\leq 1$, $\lambda_i\leq \delta_1^{-1} $ .
Since $\rho_\lambda\leq1$, we get by Lemma \ref{lem:Airy-w} and Lemma \ref{lem:Webounds}  that 
\begin{align*}
\|\rho^{\f12}_\lambda W\|_{L^2}\leq&\|\rho^{\f12}_\lambda W_a\|_{L^2}+\|W_e\|_{L^2}\leq C((|n|^\f13\lambda_i)^{\f34}A^{-1}+\nu^{-\f14}\alpha^{\f12}\lambda_i^{\f12}A^{-\f72}\lambda_i^{-1})\\
=&C(|n|^{\f14}\lambda_i^{\f34}A^{-1}+|n|^{\f12}A^{-\f72}\lambda_i^{-\f12})\leq C|n|^{\f14}\lambda_i^{\f34}A^{-1}(1+|n|^\f14A^{-\f34}(A\lambda_i)^{-\f74}\lambda_i^{\f12})\\
\leq&C|n|^{\f14}\lambda_i^{\f34}A^{-1}.
\end{align*}

This proves the lemma.
\end{proof}

Now we denote
\begin{align*}
   &J:=-\int_{0}^{+\infty}W(Y)\mathrm{e}^{-\alpha Y}\mathrm{d}Y.
\end{align*}
Then by Lemma \ref{lem:ham-bound}, we find that $J=\partial_Y\Phi(0)$. Hence, the task of the following lemma is to show that the lower bound of $J$ is strictly positive.

\begin{lemma}\label{lem:lowerJ}
  Let $W$ be the solution of \eqref{eq:Hombound-OS}. Then it holds that
  \begin{align*}
     &|J|\geq C^{-1}A^{-1}.
  \end{align*}
\end{lemma}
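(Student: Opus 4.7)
The plan is to split $W=W_a+W_e$ as in the construction before Lemma \ref{lem:Webounds}, which induces a decomposition $J=J_a+J_e$ with $J_a:=-\int_0^{+\infty}W_a(Y)e^{-\alpha Y}dY$ and $J_e:=-\int_0^{+\infty}W_e(Y)e^{-\alpha Y}dY$; by Lemma \ref{lem:ham-bound} applied separately to $\Phi_a$ and $\Phi_e$, these coincide with $\partial_Y\Phi_a(0)$ and $\partial_Y\Phi_e(0)$, respectively. The strategy is to prove a sharp lower bound $|J_a|\geq cA^{-1}$ for the Airy piece, and an upper bound $|J_e|\leq \tfrac12|J_a|$ for the error piece, then conclude by the triangle inequality.

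For the lower bound on $|J_a|$, I would exploit the explicit representation of $W_a$ as a rescaled Airy function. With $\kappa=|nV'(0)|^{\f13}$ and the change of variable $u=\kappa Y$, the integral becomes
\begin{align*}
J_a=-\dfrac{1}{\kappa\,Ai(e^{\mathrm{i}\pi/6}\kappa d)}\int_{0}^{+\infty}Ai\bigl(e^{\mathrm{i}\pi/6}(u+\kappa d)\bigr)e^{-(\alpha/\kappa)u}\,du.
\end{align*}
Bounding the denominator and the integral in the numerator separately by the Airy asymptotics in the sector $|\arg z|<\pi$, while distinguishing the regimes $\kappa|d|\lesssim1$ (where everything is comparable to $\kappa^{-1}$) and $\kappa|d|\gg1$ (where the exponential weight $e^{-\f23 z^{3/2}}$ cancels cleanly between numerator and denominator, yielding the scale $\kappa^{-1}(\kappa|d|)^{-1/2}$), produces the sharp bound $|J_a|\geq cA^{-1}$, consistent with $A\sim\kappa(1+\kappa|d|)^{1/2}$. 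These estimates are of the same flavour as those collected in Lemma \ref{lem:Airy-w}, so I would reuse that lemma's ingredients directly rather than redo the asymptotic analysis from scratch.

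For the upper bound on $|J_e|$, the sup-norm control already established in Lemma \ref{lem:Webounds} suffices: $|J_e|=|\partial_Y\Phi_e(0)|\leq\|\partial_Y\Phi_e\|_{L^\infty}\leq C|n|^{-\f13}(A\lambda_i)^{-\f74}$. Multiplying by $A$ and using $A\geq|n|^{\f13}$, $\lambda_i\geq|n|^{\gamma-1}/\delta$, together with $\gamma\in[\f23,1]$, gives
\begin{align*}
|J_e|\cdot A\leq C|n|^{-\f13}A^{-\f34}\lambda_i^{-\f74}\leq C\delta^{\f74}|n|^{\frac{7(1-\gamma)}{4}-\frac{7}{12}}\leq C\delta^{\f74},
\end{align*}
since the exponent on $|n|$ is nonpositive exactly when $\gamma\geq\f23$ (and vanishes at $\gamma=\f23$). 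Taking $\delta_*$ sufficiently small yields $|J_e|\leq\f12|J_a|$, and the triangle inequality closes the argument: $|J|\geq|J_a|-|J_e|\geq\f12|J_a|\geq C^{-1}A^{-1}$.

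The main obstacle is clearly the lower bound on $|J_a|$: the control of $J_e$ falls out almost immediately from the sup-norm estimate already in hand, whereas extracting a matching lower bound from the Airy integral requires a careful case analysis in $\kappa|d|$ and precise tracking of the constants in the Airy asymptotics to recover exactly the scale $A^{-1}$, with no room for an extra factor of $|n|^\epsilon$ that would later spoil the corrector estimates.
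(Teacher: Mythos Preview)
Your proposal is correct and follows the same overall structure as the paper: decompose $J=J_a+J_e$ via $W=W_a+W_e$, establish $|J_a|\geq C^{-1}A^{-1}$, bound $|J_e|\leq CA^{-1}\delta^{7/4}$ using the $L^\infty$ estimate of Lemma~\ref{lem:Webounds}, and conclude by the triangle inequality for $\delta$ small. Your treatment of $J_e$ matches the paper's exactly (the paper writes the bound as $|J_e|\leq CA^{-1}(|n|^{1/3}\lambda_i)^{-7/4}\leq CA^{-1}\delta^{7/4}$ via Lemma~\ref{lem:L-index}, which is the same computation as yours).

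The one point of difference is the lower bound on $|J_a|=|\partial_Y\Phi_a(0)|$. You propose a direct case analysis on $\kappa|d|$ using Airy asymptotics, and you reference Lemma~\ref{lem:Airy-w} for the ingredients; but that lemma contains only \emph{upper} bounds on $\tilde A$ and $\tilde\Phi$ and does not by itself yield a lower bound on $\partial_Y\tilde\Phi(0)$. The paper instead invokes the dedicated Lemma~\ref{lem:Airy-bound}, whose proof avoids any case split: one integrates by parts to write $-\partial_Y\tilde\Phi(0)=-\frac{A_0(\kappa\eta)}{\kappa A_0'(\kappa\eta)}-\int_0^\infty\frac{A_0(\kappa(Y+\eta))}{\kappa A_0'(\kappa\eta)}\partial_Y(e^{-\alpha Y})\,dY$, then uses the pointwise decay $|A_0(\kappa(Y+\eta))|\leq e^{-\kappa Y/3}|A_0(\kappa\eta)|$ (from Lemma~\ref{lem:Airy-p1}) to show the second term is strictly smaller than the first. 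This gives the clean bound $|\partial_Y\tilde\Phi(0)|\geq C^{-1}(1+|\kappa\eta|)^{-1/2}(\kappa+3\alpha)^{-1}$, which combined with $\alpha\leq C|n|^{1/3}$ (Lemma~\ref{lem:L-index}) yields $|J_a|\geq C^{-1}A^{-1}$ directly. Your asymptotic case analysis would also work, but the integration-by-parts route is shorter and sidesteps the need to track constants across two regimes.
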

\begin{proof}
  Thanks to Lemma \ref{lem:Airy-bound}, we have
  \begin{align*}
     \left|\int_{0}^{+\infty}W_a(Y)\mathrm{e}^{-\alpha Y}\mathrm{d}Y\right| =&|\partial_Y\Phi_a(0)| \geq C^{-1}(1+|n|^{\f13}|\lambda_\nu|)^{-\f12}(|n|^{\f13}+\alpha)^{-1},
  \end{align*}
  which along  with Lemma \ref{lem:L-index} gives
    \begin{align}\label{est:W1Philower}
     & \left|\int_{0}^{+\infty}W_a(Y)\mathrm{e}^{-\alpha Y}\mathrm{d}Y\right|\geq C^{-1}(1+|n|^{\f13}|\lambda_\nu|)^{-\f12}n^{-\f13}=C^{-1}A^{-1}.
  \end{align}
  And by Lemma \ref{lem:Webounds} and Lemma \ref{lem:L-index}, we get
  \begin{align*}
     \left|\int_{0}^{+\infty}W_e(Y)\mathrm{e}^{-\alpha Y}\mathrm{d}Y\right|=&|\partial_Y\Phi_e(0)|\leq \|\partial_Y\Phi_e\|_{L^\infty}\leq C|n|^{-\f13}(A\lambda_i)^{-\f74}\\
     \leq &CA^{-1}(|n|^{\f13}\lambda_i)^{-\f74}\leq CA^{-1}\delta^{\f74}.
  \end{align*}
  Then we deuce that
  \begin{align*}
  |J|&\geq \left|\int_{0}^{+\infty}W_a(Y)\mathrm{e}^{-\alpha Y}\mathrm{d}Y\right|- \left|\int_{0}^{+\infty}W_e(Y)\mathrm{e}^{-\alpha Y}\mathrm{d}Y\right|\geq C^{-1}A^{-1}\big(1-C\delta^{\f74}\big).
  \end{align*}
  Taking $\delta$ sufficiently small so that $C\delta^{\f74}\leq 1/2$, we arrive at
  \begin{align*}
     & |J|\geq C^{-1}A^{-1}.
  \end{align*}
 \end{proof}

From Lemma \ref{lem:lowerJ}, we can define  $W_b(Y)=W(Y)/J$. Then it follows Lemma \ref{lem:lowerJ} and Lemma \ref{lem:Wnorms} that 

\begin{proposition}\label{pro:Wbnorms}
Let  $W_b(Y)=W(Y)/J$. Then $W_b$ solves \eqref{eq:Hombound-OSnon} and satisfies 
  \begin{align*}
     & \|(\partial_Y\Phi_b,\alpha\Phi_b)\|_{L^2}\leq CA^{-\f12}\le C,\quad
   \|W_b\|_{L^2} \leq CA^{\f12},\quad  \big\|\rho^{\f12}_\lambda W_b\big\|_{L^2}\le C|n|^{\f14}\lambda_i^{\f34}. 
   \end{align*}
\end{proposition}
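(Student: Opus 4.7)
The plan is to obtain Proposition \ref{pro:Wbnorms} as an essentially immediate consequence of the preceding lemmas by a simple normalization argument. The system \eqref{eq:Hombound-OSnon} differs from \eqref{eq:Hombound-OS} only in the boundary data at $Y=0$, and since both systems are linear and homogeneous, multiplying $(W,\Phi)$ by a scalar produces another solution of the same bulk equations. So I would define
\[
W_b(Y):=W(Y)/J,\qquad \Phi_b(Y):=\Phi(Y)/J,
\]
and first verify that $(W_b,\Phi_b)$ solves \eqref{eq:Hombound-OSnon}. The two interior equations and the Dirichlet condition $\Phi_b|_{Y=0}=0$ are immediate from linearity and from $\Phi(0)=0$. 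For the Neumann normalization $\partial_Y\Phi_b(0)=1$, I would invoke Lemma \ref{lem:ham-bound}, which identifies
\[
\partial_Y\Phi(0)=-\int_0^{+\infty}W(Y)\,\mathrm{e}^{-\alpha Y}\,\mathrm{d}Y=J,
\]
so that $\partial_Y\Phi_b(0)=J/J=1$, as required.

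The quantitative bounds then follow by dividing the estimates of Lemma \ref{lem:Wnorms} by the lower bound $|J|\geq C^{-1}A^{-1}$ from Lemma \ref{lem:lowerJ}. Explicitly,
\[
\|(\partial_Y\Phi_b,\alpha\Phi_b)\|_{L^2}=\frac{\|(\partial_Y\Phi,\alpha\Phi)\|_{L^2}}{|J|}\leq\frac{CA^{-3/2}}{C^{-1}A^{-1}}=CA^{-1/2},
\]
and similarly
\[
\|W_b\|_{L^2}\leq\frac{CA^{-1/2}}{C^{-1}A^{-1}}=CA^{1/2},\qquad \big\|\rho_\lambda^{1/2}W_b\big\|_{L^2}\leq\frac{C|n|^{1/4}\lambda_i^{3/4}A^{-1}}{C^{-1}A^{-1}}=C|n|^{1/4}\lambda_i^{3/4}.
\]
The bound $A^{-1/2}\leq C$ follows from $A\geq|n|^{1/3}\geq\delta_0^{-1/3}$ recorded in Lemma \ref{lem:L-index}.

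There is essentially no obstacle: the only non-routine piece is hidden in Lemma \ref{lem:lowerJ}, whose proof (isolating the Airy contribution $W_a$ and absorbing the perturbative part $W_e$ via $(|n|^{1/3}\lambda_i)^{-1}\leq\delta$) has already been carried out. So the proof of Proposition \ref{pro:Wbnorms} amounts to collecting these three one-line estimates and a brief verification of the boundary conditions.
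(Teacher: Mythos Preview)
Your proposal is correct and matches the paper's approach exactly: the paper simply states that $W_b=W/J$ is well-defined by Lemma \ref{lem:lowerJ} and that the estimates follow from Lemmas \ref{lem:lowerJ} and \ref{lem:Wnorms}, which is precisely the normalization argument you spell out.
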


\subsubsection{Resolvent estimates for nonslip boundary condition}
Now we are back to consider the estimate for the solution to the system with nonslip boundary condition 
\begin{align}\label{eq:reswnon}\left\{\begin{aligned}
&-\sqrt{\nu}(\partial_Y^2-\alpha^2)w + \mathrm{i}\alpha\big((V-\lambda)w-(\partial_Y^2V)\phi\big)=F,\\
&(\partial_Y^2-\alpha^2)\phi=w,\ \partial_Y\phi|_{Y=0}=\phi|_{Y=0}=0,
\end{aligned}\right.
\end{align}
where  $F= -\partial_YF_1+\mathrm{i}\alpha F_2$. Let $w_{Na}$ solve
\begin{align*}\left\{\begin{aligned}
&-\sqrt{\nu}(\partial_Y^2-\alpha^2)w_{Na} + \mathrm{i}\alpha\big((V-\lambda)w_{Na}-(\partial_Y^2V)\phi_{Na}\big)=F,\\
&(\partial_Y^2-\alpha^2)\phi_{Na}=w_{Na},\ w_{Na}|_{Y=0}=\phi_{Na}|_{Y=0}=0.
\end{aligned}\right.
\end{align*}
By matching the boundary condition, we find that 
\begin{align*}
   &w(Y)=w_{Na}(Y)-\partial_Y\phi_{Na}(0)W_b(Y).
\end{align*}
Moreover, there hold the following estimates for the solution $(w,\phi)$.

\begin{proposition}\label{Pro:resdrsmall}
There exists $\delta_*\in(0,\delta_1]$ and $\nu_0$ such that the following statements hold. Let $\nu\leq \nu_0$. Suppose that \reff{Remu} holds for some $\gamma\in[\f23,1]$ and $\delta\in(0,\delta_*]$. Then for any $(F_1,F_2)\in L^2(\mathbb{R}_+)^2$, the  solution $\phi\in H^2_0(\mathbb{R}_+)$ to the system  \reff{eq:reswnon} satisfies
  \begin{align*}
     &\|(\partial_Y\phi,\alpha\phi)\|_{L^2}\leq C(\alpha\lambda_i)^{-1}\lambda_i^{-1}\|(F_1,F_2)\|_{L^2},\\
     &\|w\|_{L^2}\leq C\nu^{-\f14}\alpha^{-\f12}\lambda_i^{-\f54}\|(F_1,F_2)\|_{L^2},\\
     &\big\|\rho^{\f12}_\lambda w\big\|_{L^2}\leq C\nu^{-\f14}\alpha^{-\f12}\lambda_i^{-\f12}\|(F_1,F_2)\|_{L^2}.
  \end{align*}
\end{proposition}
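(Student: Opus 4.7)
The plan is to implement the third step outlined at the beginning of Subsection 3.3: reconstruct the solution satisfying the nonslip boundary condition by combining the Navier-slip resolvent estimates (Proposition \ref{pro:B3resH-1}) with the boundary layer corrector (Proposition \ref{pro:Wbnorms}). By linearity and the fact that $W_b$ solves the homogeneous Orr-Sommerfeld system with $\partial_Y\Phi_b(0)=1$, the combination $w(Y)=w_{Na}(Y)-\partial_Y\phi_{Na}(0)W_b(Y)$ and $\phi(Y)=\phi_{Na}(Y)-\partial_Y\phi_{Na}(0)\Phi_b(Y)$ given in the paragraph preceding the proposition satisfies \reff{eq:reswnon} together with the nonslip condition $\phi(0)=\partial_Y\phi(0)=0$. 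The three target inequalities then reduce, by the triangle inequality, to controlling $\|(\partial_Y\phi_{Na},\alpha\phi_{Na})\|_{L^2}$, $\|w_{Na}\|_{L^2}$, the corresponding norms of $W_b,\Phi_b$, and the coupling constant $|\partial_Y\phi_{Na}(0)|$.

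Next, I would apply Proposition \ref{pro:B3resH-1} to the Navier-slip part, yielding
\[
\alpha\lambda_i\|(\partial_Y\phi_{Na},\alpha\phi_{Na})\|_{L^2}+\nu^{1/4}\alpha^{1/2}\lambda_i^{-1/2}\|w_{Na}\|_{L^2}\leq C\lambda_i^{-1}\|(F_1,F_2)\|_{L^2},
\]
and invoke Proposition \ref{pro:Wbnorms} for the corrector bounds $\|(\partial_Y\Phi_b,\alpha\Phi_b)\|_{L^2}\leq C$, $\|W_b\|_{L^2}\leq CA^{1/2}$, and the weighted bound $\|\rho_\lambda^{1/2}W_b\|_{L^2}\leq C|n|^{1/4}\lambda_i^{3/4}$. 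Since $\rho_\lambda\leq 1$, the weighted estimate for $w$ only uses $\|\rho_\lambda^{1/2}w_{Na}\|_{L^2}\leq \|w_{Na}\|_{L^2}$ plus the contribution from the corrector, which is now controlled by the much smaller weighted norm of $W_b$.

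The crucial step is to estimate $|\partial_Y\phi_{Na}(0)|$. Using the Green's function of $\partial_Y^2-\alpha^2$ with $\phi_{Na}(0)=0$ and decay at infinity, I obtain the representation
\[
\partial_Y\phi_{Na}(0)=-\int_0^\infty e^{-\alpha Z}w_{Na}(Z)\,dZ.
\]
Decomposing $w_{Na}=w_1+w_2$ as in the proof of Proposition \ref{pro:B3resH-1}, I would bound the contribution of $w_1$ by Cauchy-Schwarz $|\int e^{-\alpha Z}w_1\,dZ|\leq(2\alpha)^{-1/2}\|w_1\|_{L^2}$, and the contribution of $w_2$ by $|\int e^{-\alpha Z}w_2\,dZ|\leq\|V'e^{-\alpha Z}\|_{L^2}\|w_2/V'\|_{L^2}$, exploiting the decay $|V'(Y)|\leq C(1+Y)^{-1}$ furnished by the (SC) condition so that $\|V'e^{-\alpha Z}\|_{L^2}$ stays uniformly bounded (with an additional $\alpha^{-1/2}$ gain for large $\alpha$). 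The weighted bound on $w_2/V'$ from Lemma \ref{lem:resB3} is essential here because a direct bound $\|w_2\|_{L^2}$ is too weak.

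The main obstacle is to ensure that the resulting bound on $|\partial_Y\phi_{Na}(0)|$ is sharp enough to close all three target estimates simultaneously. For the stream function estimate the correction $|\partial_Y\phi_{Na}(0)|\|(\partial_Y\Phi_b,\alpha\Phi_b)\|_{L^2}$ must be absorbed into $C(\alpha\lambda_i)^{-1}\lambda_i^{-1}\|(F_1,F_2)\|_{L^2}$; for the $L^2$ estimate of $w$ the factor $A^{1/2}$ enters (creating a loss that accounts for the worsening $\lambda_i^{-5/4}$ instead of $\lambda_i^{-1/2}$); and for the weighted estimate the much smaller factor $|n|^{1/4}\lambda_i^{3/4}$ allows us to recover the optimal $\nu^{-1/4}\alpha^{-1/2}\lambda_i^{-1/2}$ scaling. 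Tracking the powers of $\nu$, $\alpha$, $\lambda_i$, $n$ and $A$ in each contribution, and using $\alpha=n\sqrt{\nu}$, $\delta_0^{-1}\leq|n|\leq\delta_0^{-1}\nu^{-3/4}$, $\gamma\in[2/3,1]$, $|\lambda|\leq\delta_1^{-1}$ together with Lemma \ref{lem:L-index}, should close the three estimates provided $\delta$ (and hence $\delta_*$) is chosen small enough.
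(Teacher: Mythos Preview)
Your overall strategy matches the paper exactly: write $w=w_{Na}-\partial_Y\phi_{Na}(0)W_b$, invoke Proposition~\ref{pro:B3resH-1} for the Navier-slip piece and Proposition~\ref{pro:Wbnorms} for the corrector, and then track powers. The only substantive difference is how you bound the coupling constant $|\partial_Y\phi_{Na}(0)|$, and that is where your argument breaks.

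You propose $|\partial_Y\phi_{Na}(0)|=|\int_0^\infty e^{-\alpha Z}w_{Na}\,dZ|$, split $w_{Na}=w_1+w_2$, and control the $w_1$-part by Cauchy--Schwarz: $(2\alpha)^{-1/2}\|w_1\|_{L^2}\le C\nu^{-1/4}\alpha^{-1}\lambda_i^{-1/2}\|(F_1,F_2)\|_{L^2}$. This factor $\alpha^{-1/2}$ is fatal at the low end of the frequency range. For $|n|\sim\delta_0^{-1}$ one has $\alpha\sim\nu^{1/2}$, and feeding your bound into, say, the $\|w\|_{L^2}$ estimate gives $|\partial_Y\phi_{Na}(0)|\,\|W_b\|_{L^2}\lesssim \nu^{-1/4}\alpha^{-1}\lambda_i^{-1/2}A^{1/2}$; comparing to the target $\nu^{-1/4}\alpha^{-1/2}\lambda_i^{-5/4}$ one needs $\alpha^{-1/2}|n|^{1/4}\lambda_i^{3/4}=\nu^{-1/4}|n|^{-1/4}\lambda_i^{3/4}\le C$, which fails by a factor $\nu^{-1/4}$ when $|n|$ is of order one. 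The same deficit wrecks the first and third inequalities. Your $w_2$-contribution via $\|w_2/V'\|_{L^2}$ is fine; the problem is exclusively the crude $(2\alpha)^{-1/2}$ in the $w_1$-part.

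The paper avoids this entirely by a single interpolation step (cf.\ Lemma~\ref{lem:inter} with $\rho\equiv1$):
\[
|\partial_Y\phi_{Na}(0)|\le\|\partial_Y\phi_{Na}\|_{L^\infty}\le C\|w_{Na}\|_{L^2}^{1/2}\|(\partial_Y\phi_{Na},\alpha\phi_{Na})\|_{L^2}^{1/2}\le C\,|n|^{1/4}\alpha^{-1}\lambda_i^{-5/4}\|(F_1,F_2)\|_{L^2}.
\]
No splitting of $w_{Na}$ is needed, and this bound closes all three estimates directly. If you want to rescue your integral-representation route, note that $\int e^{-\alpha Z}w_1\,dZ=-\partial_Y\phi_1(0)$, so you could apply the same interpolation to $\phi_1$ instead of Cauchy--Schwarz; but at that point you have simply reproduced the paper's argument with an unnecessary detour.
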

\begin{proof}
By Proposition \ref{pro:B3resH-1}, we obtain
  \begin{align}\label{est:drsmallwna}
     &\nu^{\f14}\alpha^{\f12}\lambda_i^{-\f12}\|w_{Na}\|_{L^2}+ \alpha\lambda_i\|(\partial_Y\phi_{Na},\alpha\phi_{Na})\|_{L^2}\leq C\lambda_i^{-1}\|(F_1,F_2)\|_{L^2}.
  \end{align}
  Then by the interpolation, we get
  \begin{align*}
     |\partial_Y\phi_{Na}(0)\leq &\|\partial_Y\phi_{Na}\|_{L^{\infty}}\leq C\|w_{Na}\|_{L^2}^{\f12}\|(\partial_Y\phi_{Na},\alpha\phi_{Na})\|_{L^2}^{\f12}\\
     \leq & C\nu^{-\f18}\alpha^{-\f34}\lambda_i^{-\f54}\|(F_1,F_2)\|_{L^2}= C|n|^{\f14}\alpha^{-1}\lambda_i^{-\f54}\|(F_1,F_2)\|_{L^2}.
  \end{align*}
  This along with Proposition \ref{pro:Wbnorms} gives
  \begin{align*}
     &|\partial_Y\phi_{Na}(0)|\|(\partial_Y\Phi_b,\alpha\Phi_b)\|_{L^2} \leq C|n|^{\f14}\alpha^{-1}\lambda_i^{-\f54}A^{-\f12}\|(F_1,F_2)\|_{L^2},\\
     &|\partial_Y\phi_{Na}(0)|\|W_b\|_{L^2}\leq C|n|^{\f14}\alpha^{-1}\lambda_i^{-\f54}A^{\f12}\|(F_1,F_2)\|_{L^2}.
  \end{align*}
  Since $w(Y)=w_{Na}(Y)-\partial_Y\phi_{Na}(0)W_b(Y)$, the above inequality and \eqref{est:drsmallwna} give
  \begin{align*}
      \|(\partial_Y\phi,\alpha\phi)\|_{L^2}&\leq \|(\partial_Y\phi_{Na},\alpha\phi_{Na})\|_{L^2}+ |\partial_Y\phi_{Na}(0)|\|(\partial_Y\Phi_b,\alpha\Phi_b)\|_{L^2}\\
   &\leq C\big((\alpha\lambda_i)^{-1}\lambda_i^{-1} +|n|^{\f14}\alpha^{-1}\lambda_i^{-\f54}A^{-\f12}\big)\|(F_1,F_2)\|_{L^2}\\
     &= C(\alpha\lambda_i)^{-1}\lambda_i^{-1}\big(1+ (|n|^{\f12}\lambda_i^{\f12}/A)^{\f12}\lambda_i^{\f12}\big)\|(F_1,F_2)\|_{L^2}\\
     &\leq C(\alpha\lambda_i)^{-1}\lambda_i^{-1}\|(F_1,F_2)\|_{L^2},
  \end{align*}
  here we used the fact that  due to $\lambda_i\ge \f {|n|^{\gamma-1}} \delta\ge \f {|n|^{-\f13}} \delta\ge 2\sqrt{\nu}\alpha$, 
 \beno
|n|^{\f12}\lambda_i^{\f12}/A\leq C\f {|n|^\f13(|n|^\f13|\lambda_\nu|)^\f12} A\le C, \quad \lambda_i\leq \delta_1^{-1}.
\eeno
This gives the first inequality.

 Notice that
  \begin{align*}
     \|w\|_{L^2}\leq &\|w_{Na}\|_{L^2}+|\partial_Y\phi_{Na}(0)|\|W_b\|_{L^2}\\
     \leq & C\big(\nu^{-\f14}\alpha^{-\f12}\lambda_i^{-\f12}+ |n|^{\f14}\alpha^{-1}\lambda_i^{-\f54}A^{\f12}\big)\|(F_1,F_2)\|_{L^2}\\
     \leq &C(\alpha\lambda_i)^{-1}\lambda_i^{-1} \big(|n|^{\f12}\lambda_i^{\f32}+ |n|^{\f14}\lambda_i^{\f34}A^{\f12}\big)\|(F_1,F_2)\|_{L^2}\\
     \leq &C(\alpha\lambda_i)^{-1}\lambda_i^{-1} \big(|n|^{\f12}\lambda_i^{\f32}+ |n|^{\frac{5}{12}}\lambda_i^{\f34}+ |n|^{\f12}\lambda_i^{\f34}|\lambda_{\nu}|^{\f14}\big)\|(F_1,F_2)\|_{L^2}\\
     \leq &C(\alpha\lambda_i)^{-1}\lambda_i^{-1}  |n|^{\f12}\lambda_i^{\f34}\|(F_1,F_2)\|_{L^2}.
  \end{align*}
where in the last line, we used $|n|^{-\f13}+\lambda_i+|\lambda_{\nu}|\leq C$. This gives the second inequality. 

By Proposition \ref{pro:Wbnorms}, we also have
  \begin{align*}
     \big\|\rho^{\f12}_\lambda w\big\|_{L^2}&\leq \|w_{Na}\|_{L^2}+|\partial_Y\phi_{Na}(0)|\big\|\rho^{\f12}_\lambda W_b\big\|_{L^2}\\
     &\leq  C\big(\nu^{-\f14}\alpha^{-\f12}\lambda_i^{-\f12}+ |n|^{\f14}\alpha^{-1}\lambda_i^{-\f54}|n|^{\f14}\lambda_i^{\f34}\|(F_1,F_2)\|_{L^2}\big)\\
     &=C\nu^{-\f14}\alpha^{-\f12}\lambda_i^{-\f12}\|(F_1,F_2)\|_{L^2}.
  \end{align*}

This finishes the proof of the proposition.
\end{proof}

\section{Semigroup  estimates of  $e^{-t\mathbb{A}_\nu}$}
This section is devoted to the semigroup estimates of $e^{-t\mathbb{A}_\nu}$. More precisely, we will establish the semigroup estimates for 
$e^{-t\mathbb{A}_{\nu,n}}$, where  $\mathbb{A}_{\nu,n}$ is the restriction of $\mathbb{A}_\nu$ on the subspace $\mathcal{P}_{n}L^2_\sigma(\Omega)$. In the first part of this section, we show the $L^2$ estimates of the semigroup $e^{-t\mathbb{A}_{\nu,n}}$, and in the second part, we give the $L^\infty$ estimates of the semigroup $e^{-t\mathbb{A}_{\nu,n}}$.

\subsection{$L^2$ estimate of the semigroup $e^{-t\mathbb{A}_{\nu,n}}$}

This part is devoted to $L^2-L^2$ estimates of the semigroup $e^{-t\mathbb{A}_{\nu,n}}$. The following is our main result.

\begin{proposition}\label{semigroup-L2}
Assume that $(SC)$ condition holds. Then there exist $\delta_1,\delta_2,\delta_*\in(0,1)$ satisfying $\delta_1,\delta_2\leq \delta_0$ and $\delta_*\leq \min\{\delta_1,\delta_2\}$ such that the following statements hold true. Assume that \reff{Remu} holds for some $\delta\in(0,\delta_*]$ and $\gamma\in[\f23,1]$. Then the following estimates hold for all $f\in \mathcal{P}_nL^2_\sigma(\Omega)$ and $t>0$.
\begin{enumerate}
\item If $|n|\leq \delta_0^{-1}$, then
\begin{align*}
&\|e^{-t\mathbb{A}_{\nu,n}}f\|_{L^2}\leq Ce^{ct}\|f\|_{L^2},\\
&\|\nabla e^{-t\mathbb{A}_{\nu,n}}f\|_{L^2}\leq \frac{C}{\sqrt{\nu t}}(1+te^{ct})\|f\|_{L^2}.
\end{align*}
\item If $\delta_0^{-1}\leq |n|\leq \delta_0^{-1}\nu^{-3/4}$ and $|n|^\gamma\nu^{\frac{1}{2}}<1$, then
\begin{align*}
&\|e^{-t\mathbb{A}_{\nu,n}}f\|_{L^2}\leq C |n|^{2(1-\gamma)}e^{\frac{|n|^\gamma}{\delta}t}\|f\|_{L^2},\\
&\|\nabla e^{-t\mathbb{A}_{\nu,n}}f\|_{L^2}\leq \frac{C}{\nu^{1/2}}\Big(t^{-1/2}+|n|^{\f54(1-\gamma)+\f12}e^{\frac{|n|^\gamma}{\delta}t}\Big)\|f\|_{L^2}.
\end{align*}
\item If $|n|^\gamma\nu^{\frac{1}{2}}\geq 1$ and $|n|\leq\delta_0^{-1}\nu^{-\f34}$,  then
\begin{align*}
&\|e^{-t\mathbb{A}_{\nu,n}}f\|_{L^2}\leq C|n|^{1-\gamma}e^{\frac{|n|^\gamma}{\delta}t}\|f\|_{L^2},\\
&\|\nabla e^{-t\mathbb{A}_{\nu,n}}f\|_{L^2}\leq \frac{C}{\nu^{1/2}}\Big(t^{-1/2}+|n|^{(1-\gamma/2)}e^{\frac{|n|^\gamma}{\delta}t}\Big)\|f\|_{L^2}.
\end{align*}
\item If $|n|\geq \delta_0^{-1}\nu^{-3/4}$, then
\begin{align*}
&\|e^{-t\mathbb{A}_{\nu,n}}f\|_{L^2}\leq e^{-\frac{1}{4}\nu n^2t}\|f\|_{L^2},\\
&\|\nabla e^{-t\mathbb{A}_{\nu,n}}f\|_{L^2}\leq \frac{Ce^{-\frac{1}{4}\nu n^2t}}{\sqrt{\nu t}}(1+|n|t)\|f\|_{L^2}.
\end{align*}
\end{enumerate}
Here $C$ and $c$ are universal constants only depending on $U^P$.
\end{proposition}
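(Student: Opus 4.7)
\medskip

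\noindent\textbf{Plan of proof.} The strategy splits according to the frequency range, exactly as the four cases of the statement suggest. For the two extreme regimes (cases (1) and (4)) I would avoid the resolvent machinery entirely and rely on direct energy estimates: in case (1) the symbol $|n|$ is bounded, so the lower-order parts $U^P \partial_x u$ and $\nu^{-1/2} u_2 \partial_Y U^P$ of $\mathbb{A}_{\nu,n}$ are controlled by a universal constant via the (SC) bounds on $U^P$, and testing $\partial_t u + \mathbb{A}_{\nu,n} u = 0$ against $u$, together with the usual Poincar\'e-type gain from the dissipation $\nu\|\nabla u\|_{L^2}^2$, gives both claimed bounds by Gronwall. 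In case (4), the choice $|n|\ge \delta_0^{-1}\nu^{-3/4}$ makes $\nu n^2 \ge \delta_0^{-2}\nu^{-1/2}$ so large that it dominates the $\nu^{-1/2}$-sized perturbation $u_2\partial_Y U^P$; absorbing the perturbation into a fraction of the dissipation yields an $e^{-\nu n^2 t/4}$ decay, and the corresponding $H^1$ bound follows from the standard parabolic smoothing $\sqrt{\nu t}\,\|\nabla u(t)\|_{L^2}\lesssim \|u(0)\|_{L^2}$ followed by propagation.

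\medskip

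\noindent The main work is in cases (2) and (3), where I would use the Dunford representation
\begin{equation*}
e^{-t\mathbb{A}_{\nu,n}}f = \frac{1}{2\pi \mathrm{i}}\int_\Gamma e^{t\mu}\,(\mu+\mathbb{A}_{\nu,n})^{-1} f\,\mathrm{d}\mu,
\end{equation*}
where $\Gamma$ is drawn in the resolvent set furnished by Theorem \ref{main-resolvent}. After the rescaling that relates $\mathbb{A}_{\nu,n}$ and $\mathbb{L}_{\nu,n}$ (so that $(\mu+\mathbb{A}_{\nu,n})^{-1}=\sqrt{\nu}(\sqrt{\nu}\mu+\mathbb{L}_{\nu,n})^{-1}$), the natural contour for $\mathbb{A}_{\nu,n}$ is the union of three pieces: a vertical segment $\Gamma_v$ on which $\mathbf{Re}\,\mu = |n|^\gamma/\delta$ and $|\mathbf{Im}\,\mu|\le \delta_1^{-1}|n|$, and two sectorial rays $\Gamma_\pm$ emanating from its endpoints into the sector $\{\arg(\mu-c_0) \in (\pm\theta,\pm\pi)\}$ for some $\theta\in(\pi/2,\pi)$. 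On $\Gamma_v$ the factor $|e^{t\mu}|=e^{t|n|^\gamma/\delta}$ produces the claimed exponential growth, and the resolvent is controlled by Theorem \ref{main-resolvent}(3) when $|n|^\gamma\nu^{1/2}<1$ (case (2)) and by Theorem \ref{main-resolvent}(2) when $|n|^\gamma\nu^{1/2}\ge 1$ (case (3), since $\mathbf{Re}(\sqrt{\nu}\mu)=\nu^{1/2}|n|^\gamma/\delta\ge 1/\delta\ge\delta_2^{-1}$). Multiplying the pointwise bound on $\|(\mu+\mathbb{A}_{\nu,n})^{-1}\|$ by the length $\sim |n|/\delta_1$ of $\Gamma_v$ and by $e^{t|n|^\gamma/\delta}$ reproduces the prefactors $|n|^{2(1-\gamma)}$ in case (2) and $|n|^{1-\gamma}$ in case (3). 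On the sectorial rays $\Gamma_\pm$ the exponent $\mathbf{Re}(t\mu)$ is negative and goes to $-\infty$, so Theorem \ref{main-resolvent}(1) makes $\int_{\Gamma_\pm}|e^{t\mu}|\cdot\|(\mu+\mathbb{A}_{\nu,n})^{-1}\|\,|\mathrm{d}\mu|$ absolutely convergent and bounded by a constant (indeed exponentially small in $t$ for $t$ bounded below).

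\medskip

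\noindent For the $L^2\to H^1$ bounds I would repeat the same contour argument but with the gradient resolvent estimates in Theorem \ref{main-resolvent}: on $\Gamma_v$ the sharper weight $|n|^{1/2+(1-\gamma)/4}/\mathbf{Re}\,\mu$ in estimate \eqref{musmall} (respectively $\nu^{-1/4}(\mathbf{Re}\,\mu)^{-1/2}$ in \eqref{Immularge-na}) multiplied by the length of $\Gamma_v$ and by $e^{t|n|^\gamma/\delta}$ yields the exponentially-growing terms $|n|^{5(1-\gamma)/4+1/2}$ in case (2) and $|n|^{1-\gamma/2}$ in case (3), each with the prefactor $\nu^{-1/2}$ coming from the rescaling. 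The missing $t^{-1/2}$ part of the $H^1$ bound arises from the sectorial rays, where the $\nabla$ resolvent scales like $|\mu|^{-1/2}\nu^{-1/4}$ and the standard parabolic integral $\int_0^\infty e^{-t|\mu|\cos\theta_0}|\mu|^{-1/2}\,\mathrm{d}|\mu|\lesssim t^{-1/2}$ recovers the short-time singularity.

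\medskip

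\noindent The main technical obstacle will be to verify that the contour $\Gamma$ actually lies in the resolvent set of $-\mathbb{A}_{\nu,n}$ and to patch together the three different resolvent bounds (sectorial, large-Re, small-$|\mu|$) continuously along $\Gamma$, particularly across the transition between the regimes $|n|^\gamma\nu^{1/2}\lessgtr 1$ where the dominant estimate changes from \eqref{musmall} to \eqref{Immularge}. Once the contour is chosen so that each sub-arc is covered by the appropriate clause of Theorem \ref{main-resolvent}, summing the three contributions gives precisely the prefactors in the statement, while careful tracking of the $\sqrt{\nu}$ arising from $(\mu+\mathbb{A}_{\nu,n})^{-1}=\sqrt{\nu}(\sqrt{\nu}\mu+\mathbb{L}_{\nu,n})^{-1}$ produces the $\nu^{-1/2}$ in the $H^1$ bounds.
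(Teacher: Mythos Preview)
Your proposal follows the paper's approach closely: direct energy estimates for cases (1) and (4), and the Dunford contour integral fed by the resolvent bounds of Theorem~\ref{main-resolvent} for cases (2) and (3). The contour you describe (vertical segment $\Gamma_v$ plus two rays) is a minor variant of the paper's five-piece contour $\Gamma_\pm\cup l_\pm\cup l_0$; the paper inserts horizontal connectors $l_\pm$ so that its sectorial rays $\Gamma_\pm$ lie entirely in $\{\mathbf{Re}\,\mu\le 0\}$, but rays emanating directly from the endpoints of $\Gamma_v$ work just as well once you track the extra factor correctly.

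Two small points need attention. First, your claim that ``$\mathbf{Re}(t\mu)$ is negative'' along $\Gamma_\pm$ is not literally true: your rays start at $\mathbf{Re}\,\mu=|n|^\gamma/\delta>0$, so the initial portion contributes an extra factor $e^{t|n|^\gamma/\delta}$; this is harmless since it is absorbed by the $\Gamma_v$ contribution, but the sentence as written is wrong. Second, and more substantively, the ray integral is \emph{not} bounded by a constant uniformly as $t\to 0$: once the ray reaches $\{\mathbf{Re}\,\mu\le 0\}$ the standard bound is $\int_0^\infty e^{-ts}/(s+a)\,ds\lesssim (ta)^{-1/2}$ with $a\sim|n|$, which blows up, whereas the target $L^2$ bound $C|n|^{2(1-\gamma)}e^{t|n|^\gamma/\delta}$ stays finite at $t=0$. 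The paper resolves this by first proving, via the energy argument you already sketched for case~(1), the crude growth bound $\|e^{-t\mathbb{A}_{\nu,n}}f\|_{L^2}\le e^{|n|t/\delta_0}\|f\|_{L^2}$ valid for \emph{all} $n$, and invoking it on the short-time interval $0<t\le |n|^{-1}$; the contour integral is then only used for $t\ge |n|^{-1}$, where $(t|n|)^{-1/2}\le 1$. You should add this short-time splitting.
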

We mention that the results of (1) and (4) are obtained via standard energy method. However, for the results of  (2) and (4) which are estimates in Gevrey class for the mid-range frequency, we need to use the corresponding resolvent estimates in Theorem \ref{main-resolvent}. The idea of the proof is similar to \cite{GMM}. For the completeness, we present the details of this proof.

\begin{proof}
Let $n\in \mathbb{Z}$ and $f\in\mathcal{P}_n L^2_{\sigma}(\Omega)$. We denote $u^{(n)}:=e^{-t\mathbb{A}_{\nu,n}}f$. Then by the definition of semigroup $e^{-t\mathbb{A}_{\nu,n}}$, we know that $u^{(n)}$ satisfies
\begin{align*}
\partial_t u^{(n)}-\nu\Delta u^{(n)}+U^{p}\big(\frac{y}{\sqrt{\nu}}\big)\partial_x u^{(n)}+\frac{1}{\sqrt{\nu}}\Big(u_2^{(n)}\partial_YU^p(\frac{y}{\sqrt{\nu}}),0\Big)+\mathcal{P}_n\nabla p=0,\quad u^{(n)}|_{t=0}=f,
\end{align*}
which by introducing $Y=\frac{y}{\sqrt{\nu}}$ can be written as
\begin{align}\label{n-mode-velocity}
\partial_t u^{(n)}-\nu\Delta u^{(n)}+U^{p}(Y)\partial_x u^{(n)}+\big(y^{-1}u_2^{(n)}Y\partial_YU^p(Y),0\big)+\mathcal{P}_n\nabla p=0.
\end{align}
 Recall  that
\begin{align}
\delta_0=\frac{1}{2(1+\|U^P\|)},
\end{align}
where $\|\cdot\|$ is defined in (SC) condition.

Now we start to prove the first statement of Proposition \ref{semigroup-L2}. We first notice that by the boundary condition and divergence free condition
\begin{align}
\|y^{-1}u^{(n)}_2\|_{L^2(\Omega)}\leq2\|\partial_y u^{(n)}_2\|_{L^2(\Omega)}=2\|\partial_x u^{(n)}_1\|_{L^2(\Omega)}\leq 2|n|\|u^{(n)}\|_{L^2(\Omega)}.
\end{align}
By taking $L^2$-inner product with $u^{(n)}$ on  both sides of \reff{n-mode-velocity}, we have
\begin{align*}
\frac{\mathrm{d}}{\mathrm{d}t}\|u^{(n)}\|^2_{L^2(\Omega)}&=-2\nu\|\nabla u^{(n)}\|_{L^2(\Omega)}^2-2\mathbf{Re}\langle y^{-1}u^{(n)}_2(y)Y\partial_Y U^P(Y),u^{(n)}_1\rangle_{L^2}\\
&\leq -2\nu\|\nabla u^{(n)}\|_{L^2(\Omega)}^2+2\|Y\partial_Y U^P\|_{L^\infty}|n|\|u^{(n)}\|^2_{L^2(\Omega)}.
\end{align*}
Hence by Gronwall's inequality, we obtain that for any $n\in\mathbb{Z}$,
\begin{align}\label{semi-L2-n1}
\|u^{(n)}(t)\|_{L^2(\Omega)}\leq e^{\frac{|n|t}{\delta_0}}\|f\|_{L^2(\Omega)}.
\end{align}
To prove the derivative estimates, by the Duhamel formula, we have
\begin{align}
u^{(n)}(t)=e^{\nu t\mathbb{P}\Delta}f-\int_0^t e^{\nu(t-s)\mathbb{P}\Delta}\mathbb{P}\big(U^{p}(Y)\partial_x u^{(n)}(s)+(y^{-1}u_2^{(n)}(s)Y\partial_YU^p(Y),0)\big)\mathrm{d}s,
\end{align}
which along with classical estimates of the Stokes semigroup gives
\begin{align*}
\|\nabla u^{(n)}(t)\|_{L^2(\Omega)}\leq C(\nu t)^{-\f12}\|f\|_{L^2(\Omega)}+C|n|\|U^P\|\sup_{0\leq s\leq t}\|u^{(n)}(s)\|_{L^2(\Omega)}\int_0^t\frac{1}{\nu^{\f12}(t-s)^{\f12}}\mathrm{d}s.
\end{align*}
Combing the above inequality and \reff{semi-L2-n1}, we obtain
\begin{align}
\|\nabla u^{(n)}(t)\|_{L^2}\leq \frac{C}{\sqrt{\nu t}}(1+|n|te^{\frac{|n|t}{\delta_0}})\|f\|_{L^2}.
\end{align}
This shows the first statement in the proposition by taking $|n|\leq \delta_0^{-1}$.\smallskip

Now we turn to prove the second and third statement in Proposition \ref{semigroup-L2}. By rescaling, we have
\begin{align*}
u^{(n)}(t,x,y)=(e^{-t\mathbb{A}_{\nu,n}})f(x,y)=(e^{-\tau\mathbb{L}_{\nu,n}}f_{\nu})(X,Y),
\end{align*}
where $(\tau,X,Y)=(t/\sqrt{\nu},x/\sqrt{\nu},y/\sqrt{\nu})$, $f_\nu(X,Y)=f(\nu^{\f12}X,\nu^{\f12}Y)$. According to \reff{semi-L2-n1}, after rescaling, we have already known that $-\mathbb{L}_{\nu,n}$ generates a $C_0$-semigroup acting on $\mathcal{P}_{\nu,n}L^2_\sigma(\Omega_\nu)$. In details, from \reff{semi-L2-n1}, we infer that for any $g\in\mathcal{P}_{\nu,n}L^2_\sigma(\Omega_\nu)$,
\begin{align}
\|e^{-\tau\mathbb{L}_{\nu,n}}g\|_{L^2_\sigma(\Omega_\nu)}\leq e^{\frac{\sqrt{\nu}|n|\tau}{\delta}}\|g\|_{L^2_\sigma(\Omega_\nu)},
\end{align}
which implies the results of the second and third statement for short time $0\leq\tau\leq \nu^{-\f12}|n|^{-1}$. Hence, we now only need to consider the case of $\tau\geq \nu^{-\f12}|n|^{-1}$. According to Theorem \ref{main-resolvent}, we know that the set
\begin{align}
\Sigma_{\nu,\gamma}:=S_{\nu,n}(\theta)\cup\Big\{\mu\in\mathbb{C}|\mathbf{Re}\mu\geq\frac{n^{\gamma}\nu^{\f12}}{\delta}\Big\}
\end{align}
is included in the resolvent set of $-\mathbb{L}_{\nu,n}$, where $S_{\nu,n}(\theta)$ is the set defined in Theorem \ref{main-resolvent}. Thus, the semigroup $e^{-\tau\mathbb{L}_{\nu,n}}$ can be represented as
\begin{align}\label{semi-repre}
e^{-\tau\mathbb{L}_{\nu,n}}=\frac{1}{2\pi\mathrm{i}}\int_{\Gamma}e^{\tau\mu}(\mu+\mathbb{L}_{\nu,n})^{-1}\mathrm{d}\mu,
\end{align}
where the curve $\Gamma$ is taken as $\Gamma=\Gamma_++\Gamma_-+l_++l_-+l_0$ with
\begin{eqnarray}\label{curve-Gamma}
\begin{split}
\Gamma_{\pm}&:=\big\{\mu\in\mathbb{C}|\pm\mathbf{Im}\mu=(\tan\theta)\mathbf{Re}\mu +\delta^{-1}_1(\sqrt{\nu}n+|\tan\theta||n|^\gamma\nu^{\f12}),\mathbf{Re}\mu\leq0\big\},\\
l_{\pm}&:=\Big\{\mu\in\mathbb{C}|\pm\mathbf{Im}\mu=\delta^{-1}_1(\sqrt{\nu}|n|+|\tan\theta||n|^\gamma\nu^{\f12}),0\leq\mathbf{Re}\mu\leq\frac{|n|^\gamma\nu^{\f12}}{\delta}\Big\},\\
l_0&:=\Big\{\mu\in\mathbb{C}|0\leq|\mathbf{Im}\mu|\leq\delta^{-1}_1(\sqrt{\nu}|n|+|\tan\theta||n|^\gamma\nu^{\f12}),\mathbf{Re}\mu=\frac{|n|^\gamma\nu^{\f12}}{\delta}\Big\}.
\end{split}
\end{eqnarray}
By \reff{mularge}, we have that for any $g\in\mathcal{P}_{\nu,n}L^2(\Omega_\nu)$
\begin{align*}
\Big\|\frac{1}{2\pi\mathrm{i}}\int_{\Gamma_{\pm}}e^{\tau\mu}(\mu+\mathbb{L}_{\nu,n})^{-1}g\mathrm{d}&\mu\Big \|_{L^2(\Omega_\nu)}\leq C\|g\|_{L^2(\Omega_\nu)}\Big|\int_{\Gamma_{\pm}}e^{\tau\mathbf{Re}\mu}|\mu|^{-1}\mathrm{d}\mu\Big|\\
&\leq C\|g\|_{L^2(\Omega_\nu)}\int_0^{+\infty}\frac{e^{-\tau s}}{s+|\tan\theta|s+\delta^{-1}_1(\sqrt{\nu}n+|\tan\theta|n^\gamma\nu^{\f12})}\mathrm{d}s\\
&\leq C(\sqrt{\nu} n \tau)^{-\f12}\|g\|_{L^2(\Omega_\nu)},
\end{align*}
which implies that for any $\tau\geq (\nu^{1/2}n)^{-1}$
\begin{align}\label{Gamma-L2}
\Big\|\frac{1}{2\pi\mathrm{i}}\int_{\Gamma_{\pm}}e^{\tau\mu}(\mu+\mathbb{L}_{\nu,n})^{-1}g\mathrm{d}&\mu\Big\|_{L^2(\Omega_\nu)}\leq C\|g\|_{L^2(\Omega_\nu)}.
\end{align}
Again by \reff{mularge}, we have
\begin{eqnarray}\label{l-L2}
\begin{split}
&\Big\|\frac{1}{2\pi\mathrm{i}}\int_{l_{\pm}}e^{\tau\mu}(\mu+\mathbb{L}_{\nu,n})^{-1}g\mathrm{d}\mu \Big\|_{L^2(\Omega_\nu)}\\
&\leq C\|g\|_{L^2(\Omega_\nu)}\int_0^{\frac{|n|^\gamma\nu^{\f12}}{\delta}}\frac{e^{\tau s}}{s+\delta^{-1}_1(\sqrt{\nu}|n|+|\tan\theta||n|^{\gamma}\nu^{\f12})}\mathrm{d}s\\
&\leq Ce^{\frac{|n|^{\gamma}\nu^{\f12}\tau}{\delta}}\|g\|_{L^2(\Omega_\nu)}.
\end{split}
\end{eqnarray}

On $l_0$, we apply \reff{Immularge} and \reff{musmall} for the case $|n|^\gamma\nu^{\f12}\geq 1$ and $|n|^\gamma\nu^{\f12}\leq 1$ respectively. If $|n|^\gamma\nu^{\f12}\geq 1$, then $|n|^\gamma\nu^{\f12}+\delta|n|^2\nu^{\f32}\geq\delta\delta^{-1}_2$, which allow us to use \reff{Immularge}. Hence,  we have
\begin{eqnarray}\label{l0-L2-large}
\begin{split}
&\Big\|\frac{1}{2\pi\mathrm{i}}\int_{l_{0}}e^{\tau\mu}(\mu+\mathbb{L}_{\nu,n})^{-1}g\mathrm{d}\mu\Big\|_{L^2(\Omega_\nu)}\\
&\leq C\frac{\delta}{|n|^\gamma\nu^{\f12}}e^{\frac{|n|^{\gamma}\nu^{\f12}\tau}{\delta}}\|g\|_{L^2(\Omega_\nu)}\int_0^{\frac{C\sqrt{\nu}|n|}{\delta_1}}\mathrm{d}s\\
&\leq C|n|^{1-\gamma}e^{\frac{|n|^{\gamma}\nu^{\f12}\tau}{\delta}}\|g\|_{L^2(\Omega_\nu)}.
\end{split}
\end{eqnarray}
If $|n|^\gamma\nu^{\f12}\leq 1$, by \reff{musmall} we obtain
\begin{eqnarray}\label{l0-L2-small}
\begin{split}
&\Big\|\frac{1}{2\pi\mathrm{i}}\int_{l_{0}}e^{\tau\mu}(\mu+\mathbb{L}_{\nu,n})^{-1}g\mathrm{d}\mu \Big\|_{L^2(\Omega_\nu)}\\
&\leq C\frac{\delta}{|n|^\gamma\nu^{\f12}}|n|^{1-\gamma}e^{\frac{|n|^{\gamma}\nu^{\f12}\tau}{\delta}}\|g\|_{L^2(\Omega_\nu)}\int_0^{\frac{C\sqrt{\nu}|n|}{\delta_1}}\mathrm{d}s\\
&\leq C|n|^{2(1-\gamma)}e^{\frac{|n|^{\gamma}\nu^{\f12}\tau}{\delta}}\|g\|_{L^2(\Omega_\nu)}.
\end{split}
\end{eqnarray}
Therefore, we prove the $L^2$ estimates in the second and third statement of Proposition \ref{semigroup-L2} by \reff{Gamma-L2}, \reff{l-L2}, \reff{l0-L2-large} and \reff{l0-L2-small}.

Next we consider the derivative estimates. We use the representation \reff{semi-repre} again. For the integral on $\Gamma_{\pm}$,  we have that by \reff{mularge-nabla},
\begin{eqnarray}\label{Gamma-nabla}
\begin{split}
&\Big\|\nabla_{X,Y}\frac{1}{2\pi\mathrm{i}}\int_{\Gamma_{\pm}}e^{\tau\mu}(\mu+\mathbb{L}_{\nu,n})^{-1}g\mathrm{d}\mu\Big\|_{L^2(\Omega_\nu)}\\
&\leq C\nu^{-\f14}\|g\|_{L^2(\Omega_\nu)}\big|\int_{\Gamma_{\pm}}e^{\tau\mathbf{Re}\mu}|\mu|^{-\f12}\mathrm{d}\mu\big|\\
&\leq C\nu^{-\f14}\|g\|_{L^2(\Omega_\nu)}\int_0^{+\infty}\frac{e^{-\tau s}}{(s+|\tan\theta|s+\delta_1^{-1}(\sqrt{\nu}|n|+|\tan\theta||n|^{\gamma}\nu^{\f12}))^{\f12}}\mathrm{d}s\\
&\leq C\frac{C}{\nu^{\f14}\tau^{\f12}}\|g\|_{L^2(\Omega_\nu)}.
\end{split}
\end{eqnarray}
In a similar way, on $l_{\pm}$, we have
\begin{eqnarray}
\begin{split}
&\Big\|\nabla_{X,Y}\frac{1}{2\pi\mathrm{i}}\int_{l_{\pm}}e^{\tau\mu}(\mu+\mathbb{L}_{\nu,n})^{-1}g\mathrm{d}\mu\Big\|_{L^2(\Omega_\nu)}\\
&\leq C\nu^{-\f14}\|g\|_{L^2(\Omega_\nu)}\int_0^{\frac{|n|^\gamma\nu^{\f12}}{\delta}}\frac{e^{\tau s}}{(s+\delta_1^{-1}(\sqrt{\nu}|n|+|\tan\theta||n|^\gamma\nu^{\f12}))^{\f12}}\mathrm{d}s\\
&\leq C|n|^{\gamma-\f12}e^{\frac{|n|^\gamma\nu^{\f12}\tau}{\delta}}\|g\|_{L^2(\Omega_\nu)}.
\end{split}
\end{eqnarray}
We obtain the estimates on $l_0$ in the same way as in \reff{l0-L2-large} and \reff{l0-L2-small}. In details, for $|n|^\gamma\nu^{\f12}\geq1$, we have
\begin{align}
\Big\|\nabla_{X,Y}\frac{1}{2\pi\mathrm{i}}&\int_{l_{0}}e^{\tau\mu}(\mu+\mathbb{L}_{\nu,n})^{-1}g\mathrm{d}\mu\Big\|_{L^2(\Omega_\nu)}\leq C|n|^{1-\frac{\gamma}{2}}e^{\frac{|n|^\gamma\nu^{\f12}\tau}{\delta}}\|g\|_{L^2(\Omega_\nu)},
\end{align}
and for $|n|^\gamma\nu^{\f12}\leq1$, we have
\begin{align}\label{l0-nalbal}
&\Big\|\nabla_{X,Y}\frac{1}{2\pi\mathrm{i}}\int_{l_{0}}e^{\tau\mu}(\mu+\mathbb{L}_{\nu,n})^{-1}g\mathrm{d}\mu\Big\|_{L^2(\Omega_\nu)}\leq Cn^{\f54(1-\gamma)+\f12}e^{\frac{|n|^\gamma\nu^{\f12}\tau}{\delta}}\|g\|_{L^2(\Omega_\nu)}.
\end{align}
Combining \reff{Gamma-nabla}-\reff{l0-nalbal} and scaling back to the original variables, we finish the proof of the second and third statement of Proposition \ref{semigroup-L2}.\smallskip

Finally, we deal with the last statement of the proposition. In this situation, we are back to the system \reff{n-mode-velocity}. Recall that we consider the case of $|n|\geq \delta_0^{-1}\nu^{-\f34}$. By the standard energy method, we obtain
\begin{align*}
\frac{\mathrm{d}}{\mathrm{d}t}\|u^{(n)}\|^2_{L^2(\Omega)}&=-2\nu\|\nabla u^{(n)}\|_{L^2(\Omega)}^2-2\nu^{-\f12}\mathbf{Re}\langle u^{(n)}_2(y)\partial_Y U^P(Y),u^{(n)}_1\rangle_{L^2}\\
&\leq -\nu\|\nabla u^{(n)}\|_{L^2(\Omega)}^2-\nu n^2\|u^{(n)}\|_{L^2}^2+2\nu^{-\f12}\|\partial_Y U^P\|_{L^\infty}\|u\|^2_{L^2}.
\end{align*}
Then the above inequality and $|n|\geq \delta^{-1}_0\nu^{-\f34}$ lead to
\begin{align*}
\frac{\mathrm{d}}{\mathrm{d}t}\|u^{(n)}\|^2_{L^2(\Omega)}\leq -\frac{\nu n^2}{2}\|u^{(n)}\|^2_{L^2(\Omega)}.
\end{align*}
Hence by the Gronwall's inequality, we have
\begin{align*}
\|e^{-t\mathbb{A}_{\nu,n}}f\|_{L^2}\leq e^{-\frac{1}{4}\nu n^2t}\|f\|_{L^2}.
\end{align*}
As in the proof of the first statement, by the Duhamel formula and the above $L^2$-estimates, we can obtain the derivative estimates in  the last statement.
\end{proof}

\subsection{$L^\infty$-estimates of semigroup $e^{-t\mathbb{A}_{\nu,n}}$}
In this part, we establish two kinds of $L^\infty$-estimates of the semigroup $e^{-t\mathbb{A}_{\nu,n}}$. The first one stated as follows is the $L^2-L^\infty$ estimate, which is used to controll  nonlinear part of the full nonlinear perturbation system \eqref{NSA}.

\begin{proposition}\label{semigoup-L2Linf}
Assume that $(SC)$ condition holds. Then there exist $\delta_1,\delta_2,\delta_*\in(0,1)$ satisfying $\delta_1,\delta_2\leq \delta_0$ and $\delta_*\leq \min\{\delta_1,\delta_2\}$ such that the following statements hold true. Assume that \reff{Remu} holds for some $\delta\in(0,\delta_*]$ and $\gamma\in[\f23,1]$.
\begin{enumerate}
\item If $|n|^\gamma\nu^{\f12}\leq1$ and $|n|\le \delta_0^{-1}\nu^{-\f34}$, then
\begin{eqnarray}\nonumber
\begin{split}
&\|e^{-t\mathbb{A}_{\nu,n}}f\|_{L^2_xL^\infty_y}\\
&\leq C\an^{\f34(1-\gamma)}\log^\f12(\an)\frac{1}{\nu^{\f14}\an^{\f14}t^{\f12}}\|f\|_{L^2}+C\frac{\an^{1-\gamma}}{\nu^{\f14}t^{\f14}}e^{\frac{\an^\gamma t}{2\delta}}\|f\|_{L^2}\\
&\qquad+C\nu^{-\f14}\log^{\f12}(\an)\an^{\frac{3}{2}-\f54\gamma}e^{\frac{\an^\gamma t}{\delta}}\|f\|_{L^2},
\end{split}
\end{eqnarray}
here $\an=(1+n^2)^\f12$.

\item If $|n|^\gamma\nu^{\frac{1}{2}}\geq 1$ and $|n|\leq\delta_0^{-1}\nu^{-\f34}$, then
\begin{align*}
&\|e^{-t\mathbb{A}_{\nu,n}}f\|_{L^2_xL^\infty_y}
\leq C\frac{|n|^{(1-\gamma)/2}}{\nu^{\f14}t^{\f14}}e^{\frac{|n|^\gamma t}{2\delta}}\|f\|_{L^2}+C\nu^{-\f14}|n|^{1-\f34\gamma}e^{\frac{|n|^\gamma t}{\delta}}\|f\|_{L^2}.
\end{align*}

\item If $|n|\geq\delta_0^{-1}\nu^{-\f34}$, then
\begin{align*}
&\|e^{-t\mathbb{A}_{\nu,n}}f\|_{L^2_xL^\infty_y}\leq C\frac{1}{\nu^{\f14}t^{\f14}}(1+|n|^{\f12}t^{\f12})e^{-\frac{1}{4}\nu|n|^2t}\|f\|_{L^2}.
\end{align*}

\end{enumerate}
\end{proposition}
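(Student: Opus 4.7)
The plan is to follow the same resolvent-based contour strategy as in the proof of Proposition~\ref{semigroup-L2}, namely to use the representation
\beno
e^{-\tau\mathbb{L}_{\nu,n}}=\frac{1}{2\pi\mathrm{i}}\int_{\Gamma}e^{\tau\mu}(\mu+\mathbb{L}_{\nu,n})^{-1}\mathrm{d}\mu
\eeno
on the same contour $\Gamma=\Gamma_{\pm}\cup l_{\pm}\cup l_0$ introduced in \reff{curve-Gamma}, and to derive for each piece an $L^\infty_y$ bound of the resolvent from the estimates of Theorem~\ref{main-resolvent}. The basic bridge from $L^2$ to $L^\infty_y$ is the one-dimensional interpolation $\|v\|_{L^\infty_y}\leq C\|v\|_{L^2_y}^{1/2}\|\partial_y v\|_{L^2_y}^{1/2}$ for $v$ vanishing at $Y=0$, together with a weighted refinement of the form
\beno
\|v\|_{L^\infty_y}^2\leq C\log\an\,\|v\|_{L^2_y}\big\|\rho^{1/2}\mathrm{curl}\,v\big\|_{L^2_y}
\eeno
tuned to the weight $\rho$ in \reff{rho-lambda}.

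For case~(3), $|n|\geq \delta_0^{-1}\nu^{-3/4}$, I would simply combine the $L^2$ and $\nabla L^2$ bounds of Proposition~\ref{semigroup-L2}(4) via the unweighted interpolation: taking square roots of $e^{-\nu n^2 t/4}\|f\|_{L^2}$ and $(\nu t)^{-1/2}(1+|n|t)e^{-\nu n^2 t/4}\|f\|_{L^2}$ produces exactly the claimed $\nu^{-1/4}t^{-1/4}(1+|n|^{1/2}t^{1/2})e^{-\nu n^2 t/4}\|f\|_{L^2}$, up to harmless constants.

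For the middle-frequency cases (1)--(2), I would treat each contour piece separately. On $\Gamma_{\pm}$, applying \reff{mularge} and \reff{mularge-nabla} and interpolating gives an $L^\infty_y$ bound of the resolvent like $\nu^{-1/4}|\mu|^{-3/4}\|f\|_{L^2}$, and integrating $e^{\tau\mathbf{Re}\mu}$ against this along $\Gamma_\pm$ yields the $\nu^{-1/4}\an^{?}t^{-1/2}$ piece in front of the exponential---this is the first summand of case~(1) and the source of the $t^{-1/2}$ in case~(2). On $l_{\pm}$ the same computation produces the expected exponential factor $e^{\an^\gamma\tau/(2\delta)}$ times $t^{-1/4}$, which, together with the length $O(\nu^{1/2}\an^\gamma)$ of $l_\pm$, yields the second summand. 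On $l_0$, $\mathbf{Re}\mu$ is already at its maximum $\an^\gamma\nu^{1/2}/\delta$ so there is no time decay available; a naive interpolation of \reff{musmall} with the plain $\nabla L^2$ bound would create a polynomial loss $\an^{1/2}$. Instead, the weighted curl estimate \reff{musmall-wegihted} combined with the weighted Sobolev embedding above converts the resolvent bound on $l_0$ into an $L^\infty_y$ bound that loses only $\log^{1/2}\an$; multiplying by the length $O(\sqrt{\nu}|n|)$ of $l_0$ and the exponential $e^{\an^\gamma\tau/\delta}$ produces precisely the last summand in (1) and (2).

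\textbf{Main obstacle.} The delicate point is to establish the weighted embedding with the sharp logarithmic constant, and then to verify that the $|n|^{1/4}\lambda_i^{3/4}$ scaling of $\|\rho^{1/2}\mathrm{curl}(\mu+\mathbb{L}_{\nu,n})^{-1}f\|_{L^2}$ provided by Proposition~\ref{Pro:resdrsmall} matches, after integration along $l_0$, the claimed powers $\an^{3/2-5\gamma/4}$ and $|n|^{1-3\gamma/4}$ without introducing extra losses. The weight $\rho$ in \reff{rho-lambda} is tuned precisely so that the relevant cutoff scale $(\an^{\gamma-2/3}/\delta)^{-3/2}$ is compatible with the boundary-layer width of the Airy corrector; tracking this matching carefully is the technical heart of the proof.
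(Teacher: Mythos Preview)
Your treatment of case~(3) matches the paper exactly. For case~(2) the paper is actually simpler than what you propose: it just interpolates the already-proved semigroup bounds of Proposition~\ref{semigroup-L2}(3) via $\|u^{(n)}\|_{L^2_xL^\infty_y}\le\|u^{(n)}\|_{L^2}^{1/2}\|\partial_y u^{(n)}\|_{L^2}^{1/2}$, with no return to the contour.

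For case~(1) your idea is right but the order of operations is reversed compared to the paper, and your weighted embedding is missing a term. The paper does \emph{not} bound $\|(\mu+\mathbb{L}_{\nu,n})^{-1}f\|_{L^\infty_Y}$ pointwise in $\mu$ and then integrate. Instead it first establishes, via the contour, a semigroup bound for the weighted vorticity,
\[
\|\rho^{1/2}\omega^{(n)}(\tau)\|_{L^2}\le C\Big(\nu^{-1/4}\tau^{-1/2}+|n|^{1-\gamma/2}e^{|n|^\gamma\nu^{1/2}\tau/\delta}\Big)\|f_\nu\|_{L^2},
\]
using \eqref{mularge-nabla} on $\Gamma_\pm,l_\pm$ (together with $\rho\le1$) and \eqref{musmall-wegihted} on $l_0$; it then applies the weighted interpolation inequality (Lemma~\ref{lem:inter}) \emph{once} to the semigroup $v^{(n)}(\tau)$. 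The point is that Lemma~\ref{lem:inter} reads
\[
\|(\partial_Y\varphi,\alpha\varphi)\|_{L^\infty}\le C\|\rho^{1/2}w\|_{L^2}^{1/2}\|(\partial_Y\varphi,\alpha\varphi)\|_{L^2}^{1/2}+C\|(1-\rho^{1/2})w\|_{L^1}+C|\alpha|^{1/2}\|(\partial_Y\varphi,\alpha\varphi)\|_{L^2},
\]
so the logarithm does \emph{not} sit inside a clean two-term product as you wrote: it emerges from the middle $L^1$ term after splitting the interval $(0,h_0)$ at $h_1=h_0|n|^{3(\gamma-1)/2}$ and applying Cauchy--Schwarz on $(h_1,h_0)$, while the piece on $(0,h_1)$ requires the \emph{unweighted} bound $\|\omega^{(n)}\|_{L^2}$ from Proposition~\ref{semigroup-L2}. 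Your proposal omits this unweighted contribution, which is what produces the $\an^{-1/4}$ prefactor that keeps the final power at $\tfrac32-\tfrac54\gamma$. Doing the interpolation once at the semigroup level also avoids the loss one would incur by pushing a square root through the contour integral.
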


\begin{remark}
We point out that the second and third statement of the above proposition follow from Proposition \ref{semigroup-L2} by the interpolation. However, for the case of $|n|^\gamma\nu^{\f12}\leq 1$, the $L^\infty$ estimate of the semigroup $e^{-t\mathbb{A}_\nu}$ will loss many derivatives(the power of $|n|$), if we apply the interpolation directly. This will in turn cause more smallness requirement on the initial perturbation when we apply it to the full nonlinear system. To obtain a sharper bounds for the case of $|n|^\gamma\nu^{\f12}\le 1$, we introduce a weight function $\rho(Y)$ matching boundary layer quite well and obtain a sharper $L^\infty$ bound by a $H^1$ resolvent estimate associated to the weight function $\rho(Y)$ defined by \eqref{rho-lambda}.
\end{remark}

\begin{proof}
Let $n\in\mathbb{Z}$ and $f\in\mathcal{P}_nL^2_\sigma(\Omega)$. By the interpolation and Proposition \ref{semigroup-L2}, we obtain that for any $|n|^\gamma\nu^{\frac{1}{2}}\geq 1$ and $|n|\leq\delta_0^{-1}\nu^{-\f34}$,
\begin{align*}
\|u^{(n)}\|_{L^2_xL^\infty_y}\leq& \|u^{(n)}\|_{L^2_{x,y}}^{\f12}\|\partial_y u^{(n)}\|_{L^2_{x,y}}^{\f12}\\
\leq&C\frac{|n|^{(1-\gamma)/2}}{\nu^{\f14}t^{\f14}}e^{\frac{|n|^\gamma t}{2\delta}}\|f\|_{L^2_{x,y}}+C\nu^{-\f14}|n|^{1-\f34\gamma}e^{\frac{|n|^\gamma t}{\delta}}\|{f}\|_{L^2_{x,y}},
\end{align*}
and for any $|n|\geq\delta_0^{-1}\nu^{-\f34}$,
\begin{align*}
\|u^{(n)}\|_{L^2_xL^\infty_y}\leq& \|u^{(n)}\|_{L^2_{x,y}}^{\f12}\|\partial_y u^{(n)}\|_{L^2_{x,y}}^{\f12}\\
\leq&C\frac{1}{\nu^{\f14}t^{\f14}}(1+|n|^{\f12}t^{\f12})e^{-\frac{1}{4}\nu
n^2t}\|f\|_{L^2_{x,y}}.
\end{align*}

Next we consider the case of  $|n|^{\gamma}\nu^{\f12}\leq 1$ and $\delta_0^{-1}\le |n|\leq\delta_0^{-1}\nu^{-\f34}$. Recall that 
\begin{align*}
v^{(n)}(t,X,Y):=(e^{-\tau\mathbb{L}_{\nu,n}}f_{\nu})(X,Y)=(e^{-t\mathbb{A}_{\nu,n}})f(x,y)=u^{(n)}(t,x,y),
\end{align*}
where $(\tau,X,Y)=(t/\sqrt{\nu},x/\sqrt{\nu},y/\sqrt{\nu})$, $f_\nu(X,Y)=f(\nu^{\f12}X,\nu^{\f12}Y)$. Hence, we directly have
\begin{align}\label{Rescalenorm}
\|u^{(n)}(t)\|_{L^2_xL^\infty_y(\Omega)}=\nu^{\f14}\|v^{(n)}(\tau)\|_{L^2_XL^\infty_Y(\Omega_\nu)},
\qquad \|f\|_{L^2_xL^2_y(\Omega)}=\nu^{\f12}\|f_{\nu}\|_{L^2_XL^2_Y(\Omega_{\nu})}.
\end{align}
On the other hand,  we get by Lemma \ref{lem:inter} that 
\begin{align*}
\|v^{(n)}(\tau)\|_{L^2_XL^\infty_Y}\leq& C\|\rho^{\f12}\omega^{(n)}(\tau)\|^{\f12}_{L^2_{X,Y}}\|v^{(n)}(\tau)\|^{\f12}_{L^2_{X,Y}}+C\|(1-\rho^{\f12})\omega^{(n)}(\tau)\|_{L^2_XL^1_Y}\\
&+C\nu^{\f14}|n|^{\f12}|\|v^{(n)}(\tau)\|_{L^2_{X,Y}},
\end{align*}
where $\omega^{(n)}:=\mathrm{curl}_{X,Y}v^{(n)}$.  For the second term on the right hand side, we notice that by taking $h_0=(|n|^{\gamma-2/3}/\delta)^{-\f32}$ and $h_1:=h_0|n|^{3(\gamma-1)/2}$
\begin{align*}
   \big\|(1-\rho^{\f12})\omega^{(n)}\big\|_{L^2_XL^1_Y}\leq & \|\omega^{(n)}\|_{L^2_XL^1_Y(h_1,h_2)}+\|\omega^{(n)}\|_{L^2_XL_Y^1(0,h_1)}\\
   \leq &\big\|\rho^{-\f12}\big\|_{L^2_Y(h_1,h_0)}\big\|\rho^{\f12}\omega^{(n)}\big\|_{L^2_{X,Y}}+ h_1^{\f12}\|\omega^{(n)}\|_{L^2_{X,Y}}\\
   \leq &C|n|^{1/2-3\gamma/4}\big(\log(|n|)\big)^{\f12}\big\|\rho^{\f12}\omega^{(n)}\big\|_{L^2_{X,Y}} +C|n|^{-\f14}\|\omega^{(n)}\|_{L^2_{X,Y}}.
\end{align*}
Hence, by the above two estimates,   we get
\begin{eqnarray}
\begin{split}
\|v^{(n)}(\tau)\|_{L^2_XL^\infty_Y}\leq&C\|\rho^{\f12}\omega^{(n)}(\tau)\|^{\f12}_{L^2_{X,Y}}\|v^{(n)}(\tau)\|^{\f12}_{L^2_{X,Y}}\\
&+C|n|^{1/2-3\gamma/4}\big(\log(|n|)\big)^{\f12}\big\|\rho^{\f12}\omega^{(n)}(\tau)\big\|_{L^2_{X,Y}}\\
&+C|n|^{-\f14}\|\omega^{(n)}(\tau)\|_{L^2_{X,Y}}+C\nu^{\f14}|n|^{\f12}|\|v^{(n)}(\tau)\|_{L^2_{X,Y}}.
\end{split}
\end{eqnarray}
Applying Proposition \ref{semigroup-L2} and after scaling, we deduce  that for $\delta_0^{-1}\leq |n|\leq \delta_0^{-1}\nu^{-3/4}$ and $|n|^\gamma\nu^{\frac{1}{2}}<1$,
\begin{align*}
&\|v^{(n)}(\tau)\|_{L^2_{X,Y}}\leq C |n|^{2(1-\gamma)}e^{\frac{|n|^\gamma}{\delta}\nu^{\f12}\tau}\|f_{\nu}\|_{L^2_{X,Y}},\\
&\|\omega^{(n)}(\tau)\|_{L^2_{X,Y}}\leq C\Big(\frac{1}{\nu^{\f14}\tau^{\f12}}+n^{\f54(1-\gamma)+\f12}e^{\frac{|n|^\gamma\nu^{\f12}\tau}{\delta}}\Big)\|f_\nu\|_{L^2_{X,Y}}.
\end{align*}
Hence, in order to obtain the estimate of $\|v^{(n)}(\tau)\|_{L^2_XL^\infty_Y}$, we are left with the control of $\|\rho^{\f12}\omega^{(n)}(\tau)\|^{\f12}_{L^2_{X,Y}}$. For this purpose, we are back to the formula
\begin{align*}
v^{(n)}(\tau)=e^{-\tau\mathbb{L}_{\nu,n}}f_\nu=\frac{1}{2\pi\mathrm{i}}\int_{\Gamma}e^{\tau\mu}(\mu+\mathbb{L}_{\nu,n})^{-1}f_\nu\mathrm{d}\mu,
\end{align*}
where $\Gamma$ is given in \reff{curve-Gamma}. Then by using \reff{mularge-nabla}, \reff{Immularge-na} and \reff{musmall-wegihted}, we infer that for any $\delta_0^{-1}\leq |n|\leq \delta_0^{-1}\nu^{-\f34}$,
\begin{align}
\|\rho^{\f12}\omega^{(n)}(\tau)\|_{L^2}\leq C\Big(\frac{1}{\nu^{\f14}\tau^{\f12}}+n^{(1-\gamma/2)}e^{\frac{|n|^\gamma\nu^{\f12}\tau}{\delta}}\Big)\|f_\nu\|_{L^2_{X,Y}},
\end{align}
by using a similar argument in the proof of derivative estimates in Proposition \ref{semigroup-L2}.

Collecting the above estimates, we obtain that for $\delta_0^{-1}\leq |n|\leq \delta_0^{-1}\nu^{-3/4}$ and $|n|^\gamma\nu^{\frac{1}{2}}<1$,
\begin{align*}
&\|v^{(n)}(\tau)\|_{L^2_XL^\infty_Y}\\
&\leq C|n|^{1-\gamma}e^{\frac{|n|^\gamma\nu^{\f12}\tau}{2\delta}}\big(\frac{1}{\nu^{1/8}\tau^{1/4}}+|n|^{\frac{1}{2}-\frac{\gamma}{4}}e^{\frac{|n|^\gamma\nu^{\f12}\tau}{2\delta}}\big)\|f_{\nu}\|_{L^2_{X,Y}}\\
&\quad+C|n|^{1/2-3\gamma/4}\big(\log(|n|)\big)^{\f12}\Big(\frac{1}{\nu^{\f14}\tau^{\f12}}+n^{(1-\gamma/2)}e^{\frac{|n|^\gamma\nu^{\f12}\tau}{\delta}}\Big)\|f_\nu\|_{L^2_{X,Y}}\\
&\quad+C|n|^{-\frac{1}{4}}\Big(\frac{1}{\nu^{\f14}\tau^{\f12}}+n^{\f54(1-\gamma)+\f12}e^{\frac{|n|^\gamma\nu^{\f12}\tau}{\delta}}\Big)\|f_\nu\|_{L^2_{X,Y}}\\
&\quad+C\nu^{\f14}|n|^{\f12}|n|^{2(1-\gamma)}e^{\frac{|n|^\gamma}{\delta}\nu^{\f12}\tau}\|f_{\nu}\|_{L^2_{X,Y}}\\
&\leq C|n|^{\f34(1-\gamma)}\log^\f12(|n|)\frac{1}{\nu^{\f14}|n|^{\f14}\tau^{\f12}}\|f_{\nu}\|_{L^2_{X,Y}}+C\frac{|n|^{1-\gamma}}{\nu^{\f18}\tau^{\f14}}e^{\frac{|n|^\gamma\nu^{\f12}\tau}{2\delta}}\|f_{\nu}\|_{L^2_{X,Y}}\\
&\quad+C\log^{\f12}(|n|)|n|^{\frac{3}{2}-\f54\gamma}e^{\frac{|n|^\gamma \nu^{\f12}\tau}{\delta}}\|f_{\nu}\|_{L^2_{X,Y}}.
\end{align*}
By scaling back to the original variables and \eqref{Rescalenorm}, we obtain that for $\delta_0^{-1}\leq |n|\leq \delta_0^{-1}\nu^{-3/4}$ and $|n|^\gamma\nu^{\frac{1}{2}}<1$,
\begin{align*}
&\|u^{(n)}\|_{L^2_xL^\infty_y}\\
&\leq C|n|^{\f34(1-\gamma)}\log^\f12(|n|)\frac{1}{\nu^{\f14}|n|^{\f14}t^{\f12}}\|f\|_{L^2}+C\frac{|n|^{1-\gamma}}{\nu^{\f14}t^{\f14}}e^{\frac{|n|^\gamma t}{2\delta}}\|f\|_{L^2}\\
&\qquad+C\nu^{-\f14}\log^{\f12}(|n|)|n|^{\frac{3}{2}-\f54\gamma}e^{\frac{|n|^\gamma t}{\delta}}\|f\|_{L^2}.
\end{align*}
When $|n|\le \delta_0^{-1}$, by (1) in Proposition \ref{semigroup-L2} and the interpolation, we get 
\beno
\|u^{(n)}\|_{L^2_xL^\infty_y}\le C\f 1 {\nu^\f14 t^\f14}e^{ct}\|f\|_{L^2}.
\eeno
This completes the proof of the first one in this proposition. 
\end{proof}

In Proposition \ref{semigoup-L2Linf}, all of the $L^\infty$ estimates of the semigroup $e^{-t\mathbb{A}_\nu}$ contain some singularity of $t$ at $t=0$, which means that these result could not apply to estimate the homogeneous part of the solution, as our goal is to obtain an $L^\infty$ stability up to $t=0$. Hence, we need to use the following $H^1-L^\infty$ semigroup estimates without the singularity at $t=0$. 

\begin{proposition}\label{semigoup-H^1Linf}
Under the same assumptions in Proposition \ref{semigroup-L2}, there holds for all $f\in \mathcal{P}_nL^2_\sigma(\Omega)\cap\mathcal{P}_n L^2_xH^1_y(\Omega)$ and $t>0$.
\begin{enumerate}
\item If $|n|^\gamma\nu^{\f12}\leq1$, then
\begin{align}
\|&e^{-t\mathbb{A}_{\nu,n}}f\|_{L^2_xL^\infty_y}\leq C\|f\|_{L^2_xH^1_y}+C\nu^{-\f14}t^{\f34}(1+|n|^{3-2\gamma})e^{\frac{|n|^\gamma t}{\delta}}\|f\|_{L^2}.\nonumber
\end{align}

\item If $|n|^\gamma\nu^{\frac{1}{2}}\geq 1$ and $|n|\leq\delta_0^{-1}\nu^{-\f34}$, then
\begin{align*}
&\|e^{-t\mathbb{A}_{\nu,n}}f\|_{L^2_xL^\infty_y}\leq C\|f\|_{L^2_xH^1_y}+C\nu^{-\f14}t^{\f34}|n|^{2-\gamma}e^{\frac{|n|^\gamma t}{\delta}}\|f\|_{L^2}.
\end{align*}

\item If $|n|\geq\delta_0^{-1}\nu^{-\f34}$, then
\begin{align*}
&\|e^{-t\mathbb{A}_{\nu,n}}f\|_{L^2_xL^\infty_y}\leq C\|f\|_{L^2_xH^1_y}+C\nu^{-\f14}t^{\f34}|n|e^{-\f14\nu n^2t}\|f\|_{L^2}.
\end{align*}
\end{enumerate}
\end{proposition}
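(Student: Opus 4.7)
My plan is to decompose $u^{(n)}(t) := e^{-t\mathbb{A}_{\nu,n}}f = \bar u(t) + w(t)$, where $\bar u(t) := e^{-t\mathbb{A}^{0}_{\nu,n}}f$ is the pure Stokes evolution of $f$ with $\mathbb{A}^{0}_{\nu,n} := -\nu\Delta$ (acting on $\mathcal{P}_nL^2_\sigma(\Omega)$ with the nonslip boundary condition). The piece $\bar u$ will yield the time-uniform term $C\|f\|_{L^2_xH^1_y}$ in the bound, and the remainder $w$ will produce the time-growing contribution via a Duhamel formula together with Proposition~\ref{semigoup-L2Linf}.

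For $\bar u$, the Stokes semigroup on the half-plane preserves pointwise bounds (essentially a maximum-principle argument via odd reflection in $y$ for the heat equation, together with the standard $L^\infty$-regularity of the Stokes pressure correction on the half-space), hence
\[
\|\bar u(t)\|_{L^2_xL^\infty_y} \le C\|f\|_{L^2_xL^\infty_y} \le C\|f\|_{L^2_xH^1_y},
\]
the last step by the Sobolev embedding $H^1_y(\mathbb{R}_+)\hookrightarrow L^\infty_y(\mathbb{R}_+)$. This accounts for the first term in all three bounds of the proposition.

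For $w := u^{(n)}-\bar u$, we have $w(0)=0$ and $\partial_t w + \mathbb{A}_{\nu,n}w = -g(t)$ with
\[
g(t) := \mathbb{P}_n\Bigl(U^P(y/\sqrt{\nu})\partial_x \bar u(t) + \nu^{-1/2}\bar u_2(t)\bigl(\partial_YU^P(y/\sqrt{\nu}),0\bigr)\Bigr).
\]
Using the (SC) bound $(1+Y)|\partial_YU^P|\le C$, Hardy's inequality with $\bar u_2|_{y=0}=0$, and the divergence-free relation $\partial_y\bar u_2 = -\mathrm{i}n\bar u_1$, one gets $\|g(s)\|_{L^2}\le C|n|\,\|\bar u(s)\|_{L^2}\le C|n|\,\|f\|_{L^2}$. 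Duhamel then yields
\[
\|w(t)\|_{L^2_xL^\infty_y}\le \int_0^t \bigl\|e^{-(t-s)\mathbb{A}_{\nu,n}}g(s)\bigr\|_{L^2_xL^\infty_y}\,\mathrm{d}s,
\]
to which Proposition~\ref{semigoup-L2Linf} applies directly. In each frequency regime the time integration converts the singular factors $(t-s)^{-1/2}$ and $(t-s)^{-1/4}$ into $t^{1/2}$ and $t^{3/4}$; the $t^{1/2}$ contribution is absorbed into either $C\|f\|_{L^2_xH^1_y}$ (for very small $t$) or into the $t^{3/4}$ main remainder (for larger $t$ via $t^{1/2}\le T^{-1/4}t^{3/4}$), yielding the claimed bounds.

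The principal technical obstacle is matching polynomial powers of $\an$ in case (1), where Proposition~\ref{semigoup-L2Linf} contains an additional term with no $s$-singularity of the form $C\nu^{-1/4}\log^{1/2}\an\cdot\an^{3/2-5\gamma/4}e^{\an^\gamma(t-s)/\delta}$. After integration in $s$ and multiplication by $|n|$ from $\|g\|_{L^2}$, this contributes $C\nu^{-1/4}\log^{1/2}\an\cdot\an^{5/2-9\gamma/4}(e^{\an^\gamma t/\delta}-1)\|f\|_{L^2}$, which must be absorbed into $C\nu^{-1/4}t^{3/4}(1+\an^{3-2\gamma})e^{\an^\gamma t/\delta}\|f\|_{L^2}$. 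A case split on $\an^\gamma t\le 1$ versus $\an^\gamma t\ge 1$, combined with the regime constraint $\an^\gamma\nu^{1/2}\le 1$ and $\gamma\in[2/3,1)$, closes this estimate (with $\log^{1/2}\an$ factors absorbed via $\log^{1/2}\an\le C_\epsilon \an^\epsilon$ and a minor adjustment of $\delta$). Cases (2) and (3) are analogous but simpler since only singular-in-time terms appear in the corresponding $L^2$-$L^\infty$ bounds.
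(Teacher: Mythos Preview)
Your decomposition $u^{(n)}=\bar u+w$ and your Duhamel formula for $w$ with the \emph{full} propagator $e^{-(t-s)\mathbb{A}_{\nu,n}}$ are valid, but this route is needlessly complicated and your absorption argument for the $t^{1/2}$ contribution does not close. The paper instead writes Duhamel with the \emph{Stokes} semigroup as the free part,
\[
u^{(n)}(t)=e^{\nu t\mathbb{P}\Delta}f-\int_0^t e^{\nu(t-s)\mathbb{P}\Delta}\,\mathbb{P}\Big(U^P\partial_x u^{(n)}(s)+\nu^{-1/2}u_2^{(n)}(s)\big(\partial_YU^P,0\big)\Big)\,\mathrm{d}s,
\]
so the convective source involves $u^{(n)}$ itself. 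On the first piece one uses $\|e^{\nu t\mathbb{P}\Delta}f\|_{L^2_xL^\infty_y}\le C\|f\|_{L^2_xH^1_y}$ (Lemma~\ref{lem:stokes-semigroup}); on the integral one uses only the elementary Stokes smoothing $\|e^{\nu t\mathbb{P}\Delta}g\|_{L^2_xL^\infty_y}\le C(\nu t)^{-1/4}\|g\|_{L^2}$, which integrates cleanly to $C\nu^{-1/4}t^{3/4}$. Together with $\|g(s)\|_{L^2}\le C|n|\,\|u^{(n)}(s)\|_{L^2}$ (exactly the Hardy plus divergence-free argument you gave), this reduces everything to $\sup_{0\le s\le t}\|u^{(n)}(s)\|_{L^2}$, and Proposition~\ref{semigroup-L2} finishes each regime in one line. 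No case-split on $\an^\gamma t$, no logarithms.

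The concrete gap in your argument is the first term in Proposition~\ref{semigoup-L2Linf}(1), namely $C\an^{\frac34(1-\gamma)}\log^{1/2}\an\cdot\nu^{-1/4}\an^{-1/4}(t-s)^{-1/2}$. After integration and multiplication by $|n|$ it contributes
\[
C\,\nu^{-1/4}\an^{\frac32-\frac{3\gamma}{4}}\log^{1/2}\an\cdot t^{1/2}\,\|f\|_{L^2}.
\]
Your claimed inequality $t^{1/2}\le T^{-1/4}t^{3/4}$ goes the wrong way for $t\le T\le 1$ (it is equivalent to $t\ge T$), so you cannot absorb this into the $t^{3/4}$ term for small $t$. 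Nor can you absorb it into $C\|f\|_{L^2_xH^1_y}$, because the prefactor $\nu^{-1/4}\an^{3/2-3\gamma/4}\log^{1/2}\an$ is unbounded in $(\nu,n)$ under the sole constraint $\an^\gamma\nu^{1/2}\le 1$. A direct check (e.g.\ $\gamma=2/3$, $\an\sim\nu^{-1/2}$) shows there is an interval of $t$ on which neither absorption succeeds. The paper's choice of free propagator avoids this entirely, since the Stokes $L^2\!\to\!L^\infty$ bound has a single $(\nu t)^{-1/4}$ singularity and no anomalous $\an$-powers.
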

The main idea of the proof is that we treat the linear part generated by shear follow as a perturbation, and  then the semigroup $e^{-t\mathbb{A}_n}$ is a perturbation of Stokes semigroup $e^{\nu t\mathbb{P}\Delta}$.

\begin{proof}
Let $n\in\mathbb{Z}$ and $f\in\mathcal{P}_nL^2_xL^\infty_y(\Omega)\cap\mathcal{P}_nL^2_\sigma(\Omega)$.
 We still denote $u^{(n)}=e^{-t\mathbb{A}_{\nu,n}}f$. By Duhamel's formula, $u^{(n)}$ can be written as
\begin{align}
u^{(n)}(t)=e^{\nu t\mathbb{P}\Delta}f-\int_0^t e^{\nu(t-s)\mathbb{P}\Delta}\mathbb{P}\big(U^{p}(Y)\partial_x u^{(n)}(s)+(y^{-1}u_2^{(n)}(s)Y\partial_YU^p(Y),0)\big)\mathrm{d}s.
\end{align}
According to Lemma \ref{lem:stokes-semigroup}, we have
\begin{align*}
&\|u^{(n)}(t)\|_{L^2_xL^\infty_y}\\
&\leq C\|f\|_{L^2_xH^1_y}+C\sup_{0\leq s\leq t}\|\mathbb{P}\big(U^{p}(Y)\partial_x u^{(n)}(s)+(y^{-1}u_2^{(n)}(s)Y\partial_YU^p(Y),0)\big)\|_{L^2}\int_0^t\frac{1}{(\nu(t-s))^{\f14}}\mathrm{d}s\\
&\leq C\|f\|_{L^2_xH^1_y}+C\nu^{-\f14}t^{\f34}\big(\|\partial_xu^{(n)}(s)\|_{L^2}+\|y^{-1}u_2^{(n)}(s)\|_{L^2}\big),
\end{align*}
which along with Hardy inequality and the divergence free condition implies
\begin{align}\label{heat-L2-Linfinity}
\|&u^{(n)}(t)\|_{L^2_xL^\infty_y}\leq C\|f\|_{L^2_xH^1_y}+C\nu^{-\f14}t^{\f34}|n|\sup_{0\leq s\leq t}\|u^{(n)}(s)\|_{L^2}.
\end{align}

Next we consider the case of $|n|^\gamma\nu^{\f12}\leq1$. In this case, by the results (1) and (2) in Proposition \ref{semigroup-L2}, we have 
\begin{align*}
\|u^{(n)}(t)\|_{L^2}\leq C\big(1+|n|^{2(1-\gamma)}\big)e^{\frac{|n|^\gamma t}{\delta}}\|f\|_{L^2},
\end{align*}
which along with \reff{heat-L2-Linfinity} gives
\begin{align}\nonumber
\|&u^{(n)}(t)\|_{L^2_xL^\infty_y}\leq C\|f\|_{L^2_xH^1_y}+C\nu^{-\f14}t^{\f34}\big(1+|n|^{3-2\gamma}\big)e^{\frac{|n|^\gamma t}{\delta}}\|f\|_{L^2}.
\end{align}
In a similar way, we deduce that for $1\leq |n|^\gamma\nu^{\f12}$ and $|n|\leq\delta_0^{-1}\nu^{-\f34}$,
\begin{align*}
\|&u^{(n)}(t)\|_{L^2_xL^\infty_y}\leq C\|f\|_{L^2_xH^1_y}+C\nu^{-\f14}t^{\f34}|n|^{2-\gamma}e^{\frac{|n|^\gamma t}{\delta}}\|f\|_{L^2},
\end{align*}
and for $|n|\geq \delta_0^{-1}\nu^{-\f34}$,
\begin{align*}
\|&u^{(n)}(t)\|_{L^2_xL^\infty_y}\leq C\|f\|_{L^2_xH^1_y}+C\nu^{-\f14}t^{\f34}|n|e^{-\f14\nu n^2t}\|f\|_{L^2}.
\end{align*}
This completes the proof.
\end{proof}

\section{Nonlinear stability in Gevrey class}
In this section,  we prove the nonlinear stability.  By the Duhamel formula, the solution $u(t)$ of  the system \reff{NSA} with initial data $a$ is given by
\begin{align}
u(t)=e^{-t\mathbb{A}_\nu}a-\int_0^t e^{-t\mathbb{A}_{\nu}(t-s)}\mathbb{P}(u\cdot\nabla u)ds,\qquad t>0.
\end{align}
With respect to each Fourier mode $n$ of variable $x$, we know that the solution $u(t)$ can be represented as
\begin{align}
\mathcal{P}_n u(t)=e^{-t\mathbb{A}_{\nu,n}}\mathcal{P}_n a-\int_0^te^{-(t-s)\mathbb{A}_{\nu,n}}\mathcal{P}_n\mathbb{P}(u\cdot\nabla u)(s)ds.
\end{align}
Since we have already obtained the estimates of the semigroup $e^{-t\mathbb{A}_{\nu,n}}$ in Proposition \ref{semigroup-L2}, \ref{semigoup-L2Linf} and \ref{semigoup-H^1Linf}, our next task is to obtain nonlinear estimates.\smallskip

Before we start our proof, we recall some function spaces which we shall work on. For $\gamma\in[0,1]$, $d\geq 0$ and $K>0$, the Banach space $X_{d,\gamma,K}$ is given by
\begin{align*}
&X_{d,\gamma,K}=\Big\{f\in L^2_\sigma(\Omega): \|f\|_{X_{d,\gamma,K}}=\sup_{n\in\mathbb{Z}}(1+|n|^d)e^{K\an^{\gamma}}\|\mathcal{P}_n f\|_{L^2(\Omega)}<+\infty\Big\},\\
&X^{(1)}_{d,\gamma,K}=\Big\{f\in L^2_\sigma(\Omega): \|f\|_{X_{d,\gamma,K}^{(1)}}=\sup_{n\in\mathbb{Z}}(1+|n|^d)e^{K\an^{\gamma}}\|\mathcal{P}_n f\|_{L^2_xH^1_y(\Omega)}<\infty\Big\},
\end{align*}
and the space $Y_{d,\gamma,K}$ is defined as
\begin{align*}
Y_{d,\gamma,K}=\Big\{f\in L^2_\sigma(\Omega): \|f\|_{Y_{d,\gamma,K}}=\sup_{n\in\mathbb{Z}}(1+|n|^d)e^{K\an^{\gamma}}\|\mathcal{P}_n f\|_{L^2_xL^\infty_y(\Omega)}<\infty\Big\}.
\end{align*}

\begin{proof}[Proof of Theorem \ref{main}]
Let $\gamma\in[\f23,1)$ and $u(t)$ be the solution to the system \reff{NSA} with initial data $a$. Set
\begin{align}
q:=d-3(1-\gamma)-1\in(1,d),\qquad K(t):=K-2\delta^{-1}t.
\end{align}
In what follows, we always assume that $T\le \delta K/2\le 1$ so that $K(t)\ge 0$ for $t\in [0,T]$.

We would like to establish a priori estimate of $u(t)$ in the space
\begin{eqnarray}
\begin{split}
Z_{\gamma,K,T}:=&\Big\{f\in(C([0,T];L^2_\sigma(\Omega)):\|f\|_{Z_{\gamma,K,T}}:=\sup_{0<t\leq T'}\big(\|f(t)\|_{X_{q,\gamma,K(t)}}\\
&\quad+\nu^{\f14}\|f(t)\|_{Y_{q,\gamma,K(t)}}+(\nu t)^{\f12}\|\nabla f(t)\|_{X_{q,\gamma,K(t)}}\big)<+\infty\Big\}.
\end{split}
\end{eqnarray}

We first show a basic estimate for nonlinear term $\mathcal{P}_n\mathbb{P}(u\cdot\nabla u)$, which will be used frequently later. We notice that
\begin{align*}
\|\mathcal{P}_n\mathbb{P}(u\cdot\nabla u)\|_{L^2(\Omega)}&\leq \|\mathcal{P}_n(u_1\partial_x u)\|_{L^2(\Omega)}+\|\mathcal{P}_n(u_2\partial_y u)\|_{L^2(\Omega)}\\
&\leq \big\|\sum_{j\in\mathbb{Z}}(e^{-\mathrm{i}jx}\mathcal{P}_j u_1)\cdot(e^{-\mathrm{i}(n-j)x}\partial_x\mathcal{P}_{n-j}u)\big\|_{L^2_y(\mathbb{R}_+)}\\
&\quad+\big\|\sum_{j\in\mathbb{Z}}(e^{-\mathrm{i}jx}\mathcal{P}_j u_2)\cdot(e^{-\mathrm{i}(n-j)x}\partial_y\mathcal{P}_{n-j}u)\big\|_{L^2_y(\mathbb{R}_+)}.
\end{align*}
From Gagliardo-Nirenberg inequality and divergence free condition, we obtain
\begin{align*}
\big\|\sum_{j\in\mathbb{Z}}&(e^{-\mathrm{i}jx}\mathcal{P}_j u_1)\cdot(e^{-\mathrm{i}(n-j)x}\partial_x\mathcal{P}_{n-j}u)\big\|_{L^2_y(\mathbb{R}_+)}\\
&\leq C\sum_{j\in\mathbb{Z}}\|\mathcal{P}_j u\|_{L^2(\Omega)}^{\f12}\|\nabla\mathcal{P}_j u\|_{L^2(\Omega)}^{\f12}|n-j|^{\f12}\|\mathcal{P}_{n-j} u\|_{L^2(\Omega)}^{\f12}\|\nabla\mathcal{P}_{n-j} u\|_{L^2(\Omega)}^{\f12},
\end{align*}
and
\begin{align*}
\big\|\sum_{j\in\mathbb{Z}}&(e^{-\mathrm{i}jx}\mathcal{P}_j u_2)\cdot(e^{-\mathrm{i}(n-j)x}\partial_y\mathcal{P}_{n-j}u)\big\|_{L^2_y(\mathbb{R}_+)}
\leq C\sum_{j\in\mathbb{Z}}|j|^{\f12}\|\mathcal{P}_j u\|_{L^2(\Omega)}\|\nabla\mathcal{P}_{n-j} u\|_{L^2(\Omega)}.
\end{align*}
Therefore, for $u\in Z_{\gamma,K,T}$, we have 
\begin{align}\label{mainproof-nonlinear}
\|\mathcal{P}_n\mathbb{P}(u\cdot\nabla u)(t)\|_{L^2(\Omega)}\leq C(\nu t)^{-\f12}\frac{e^{-K(t)\an^\gamma}}{1+|n|^{q-\frac{1}{2}}}\|u\|_{Z_{\gamma,K,T}}^2
\end{align}
due to $q>1$.\smallskip

Now we are ready to show the estimates of $\mathcal{P}_n u(t)$. We split our proof into three situations: $|n|^\gamma\nu^{\f12}<1$, $1\leq |n|^\gamma\nu^{\f12}\leq \delta_0^{-\gamma}\nu^{-\f34\gamma+\f12}$ and $|n|\geq \delta_0^{-1}\nu^{-\f34}$.\smallskip

\no\textbf{Case 1.} $|n|^{\gamma}\nu^{\f12}<1$. In this case, we infer from Proposition \ref{semigroup-L2}  that
\begin{align*}
\|\mathcal{P}_n u(t)\|_{L^2(\Omega)}\leq& C(1+|n|^{2(1-\gamma)})e^{\frac{|n|^\gamma t}{\delta}}\|\mathcal{P}_n a\|_{L^2(\Omega)}\\
&+C(1+|n|^{2(1-\gamma)})\int_0^t e^{\frac{|n|^\gamma(t-s)}{\delta}}\|\mathcal{P}_n\mathbb{P}(u\cdot\nabla u)(s)\|_{L^2(\Omega)}\mathrm{d}s\\
\leq&C(1+|n|^{2(1-\gamma)})e^{\frac{|n|^\gamma t}{\delta}}\|\mathcal{P}_n a\|_{L^2(\Omega)}\\
&+C(1+|n|^{2(1-\gamma)})\int_0^te^{\frac{|n|^\gamma (t-s)}{\delta}}(\nu s)^{-\f12}\frac{e^{-K(s)\an^\gamma}}{1+|n|^{q-\frac{1}{2}}}\mathrm{d}s\|u\|_{Z_{\gamma,K,T}}^2.
\end{align*}
On the other hand, we notice that
\begin{eqnarray}\label{mainproof-int}
\begin{split}
\int_0^{t}e^{\frac{|n|^\gamma(t-s)}{\delta}}e^{-K(s)\an^\gamma}s^{-\f12}\mathrm{d}s&\le e^{-K(t)\an^\gamma}\int_0^t e^{\frac{-\an^\gamma(t-s)}{\delta}}s^{-\f12}\mathrm{d}s\\
&\leq C e^{-K(t)\an^\gamma}\an^{-\frac{\gamma}{2}}.
\end{split}
\end{eqnarray}
Therefore, we obtain
\begin{align*}
\|\mathcal{P}_n u(t)\|_{L^2(\Omega)}\leq& C\frac{e^{-K(t)\an ^\gamma}}{1+|n|^{d-2(1-\gamma)}}\|a\|_{X_{d,\gamma,K}}\\
&+\frac{C\an^{\f12-\f \gamma 2+2(1-\gamma)}}{\nu^{\f12}(1+|n|^q)}e^{-K(t)\an^\gamma}\|u\|^2_{Z_{\gamma,K,T}}.
\end{align*}
This shows that for $\beta_0=\frac{5(1-\gamma)}{4\gamma}$,
\begin{align}\label{mainproof-case1-L^2}
&\sup_{0<t\leq T}\sup_{|n|^\gamma\nu^{\f12}<1}(1+|n|^q)e^{K(t)\an^\gamma}\|\mathcal{P}_n u(t)\|_{L^2(\Omega)}\nonumber\\
&\leq C\Big(\|a\|_{X_{d,\gamma,K}}+\nu^{-\f12}\|u\|^2_{Z_{\gamma,K,T}}\sup_{|n|^\gamma\nu^{\f12}<1}\an^{\frac 52(1-\gamma)}\Big)\nonumber\\
&\leq C\Big(\|a\|_{X_{d,\gamma,K}}+\nu^{-\f12-\beta_0}\|u\|^2_{Z_{\gamma,K,T}}\Big).
\end{align}

Now we turn to show the $L^2_xL^\infty_y$ estimates in this case. By Proposition \ref{semigoup-L2Linf} and Proposition \ref{semigoup-H^1Linf}, we have
\begin{align*}
\|\mathcal{P}_n u(t)\|_{L^2_xL^\infty_y(\Omega)}\leq& C\|\mathcal{P}_n a\|_{L^2_xH^1_y(\Omega)}+C\nu^{-\frac{1}{4}}t^{\f34}\an^{3-2\gamma}e^{\frac{|n|^\gamma t}{\delta}}\|\mathcal{P}_n a\|_{L^2(\Omega)}\\
&+\frac{C}{\nu^{\f14}\an^{\f14}}\an^{\f34(1-\gamma)}\log^\f12\an\int_0^t\frac{1}{(t-s)^{\f12}}\|\mathcal{P}_n\mathbb{P}(u\cdot\nabla u)(s)\|_{L^2(\Omega)}\mathrm{d}s\\
&+\frac{C\an^{1-\gamma}}{\nu^{\f14}}\int_0^t(t-s)^{-\f14}e^{\frac{\an^\gamma(t-s)}{2\delta}}\|\mathcal{P}_n\mathbb{P}(u\cdot\nabla u)(s)\|_{L^2(\Omega)}\mathrm{d}s\\
&+C\nu^{-\f14}\log^{\f12}\an\an^{\frac{3}{2}-\f54\gamma}\int_0^t e^{\frac{|n|^\gamma(t-s)}{\delta}}\|\mathcal{P}_n\mathbb{P}(u\cdot\nabla u)(s)\|_{L^2(\Omega)}\mathrm{d}s,
\end{align*}
which along with \reff{mainproof-nonlinear} implies
\begin{align*}
\|\mathcal{P}_n u(t)\|_{L^2_xL^\infty_y(\Omega)}\leq& C\|\mathcal{P}_n a\|_{L^2_xH^1_y(\Omega)}+C\nu^{-\frac{1}{4}}t^{\f34}\an^{3-2\gamma}e^{\frac{|n|^\gamma t}{\delta}}\|a\|_{L^2(\Omega)}\\
&+\frac{C\an^{\f34(1-\gamma)}\log^\f12\an)}{\nu^{\f34}\an^{\f14}(1+|n|^{q-\f12})}\|u\|_{Z_{\gamma,K,T}}^2\int_0^t(t-s)^{-\f12}e^{-K(s)\an^\gamma}s^{-\f12}\mathrm{d}s\\
&+\frac{C\an^{1-\gamma}}{\nu^{\f34}(1+|n|^{q-\f12})}\|u\|^2_{Z_{\gamma,K,T}}\int_0^t(t-s)^{-\f14}e^{\frac{|n|^\gamma(t-s)}{2\delta}}e^{-K(s)\an^\gamma}s^{-\f12}\mathrm{d}s\\
&+\frac{C\log^{\f12}\an\an^{\frac{3}{2}-\f54\gamma}}{\nu^{\f34}(1+|n|^{q-\f12})}\|u\|^2_{Z_{\gamma,K,T}}\int_0^t e^{\frac{|n|^\gamma(t-s)}{\delta}}e^{-K(s)\an^\gamma}s^{-\f12}\mathrm{d}s.
\end{align*}
On the other hand, we notice that
\begin{align*}
&\int_0^t(t-s)^{-\f12}e^{-K(s)\an^\gamma}s^{-\f12}\mathrm{d}s\leq C e^{-K(t)\an^\gamma},\\
&\int_0^t(t-s)^{-\f14}e^{\frac{|n|^\gamma(t-s)}{2\delta}}e^{-K(s)\an^\gamma}s^{-\f12}\mathrm{d}s\leq C\an^{-\f\gamma 4}e^{-K(t)\an^\gamma}.
\end{align*}
Collecting \reff{mainproof-int} and the above two inequalities, we obtain
\begin{align*}
\|\mathcal{P}_n u(t)\|_{L^2_xL^\infty_y(\Omega)}\leq& C\|\mathcal{P}_n a\|_{L^2_xH^1_y(\Omega)}+C\nu^{-\frac{1}{4}}t^{\f34}\an^{3-2\gamma}e^{\frac{|n|^\gamma t}{\delta}}\|\mathcal{P}_na\|_{L^2(\Omega)}\\
&+\frac{C\an^{\f34(1-\gamma)+\f14}\log^\f12\an}{\nu^{\f34}(1+|n|^{q})}e^{-K(t)\an^\gamma}\|u\|_{Z_{\gamma,K,T}}^2\\
&+\frac{C\an^{1-\gamma+\f12-\f \gamma 4}}{\nu^{\f34}(1+|n|^{q})}e^{-K(t)\an^\gamma}\|u\|^2_{Z_{\gamma,K,T}}\\
&+\frac{C\log^{\f12}\an\an^{\frac{3}{4}+\f54(1-\gamma)-\f \gamma 2}}{\nu^{\f34}(1+|n|^{q})}e^{-K(t)\an^\gamma}\|u\|^2_{Z_{\gamma,K,T}}.
\end{align*}
This shows that 
\begin{align*}
&\sup_{0<t\leq T}\sup_{|n|^\gamma\nu^{\f12}<1}\nu^{\f14}(1+|n|^q)e^{K(t)\an^\gamma}\|\mathcal{P}_n u(t)\|_{L^2_xL^\infty_y(\Omega)}
\leq C\nu^{\f14}\|a\|_{X_{d,\gamma,K}^{(1)}}+C\|a\|_{X_{d,\gamma,K}}\\
&\quad+C\nu^{-\f12}\|u\|^2_{Z_{\gamma,K,T}}\sup_{|n|^\gamma\nu^{\f12}<1}\an^{\frac{1}{4}+\f34(1-\gamma)}\log^\f12\an
+C\nu^{-\f12}\|u\|^2_{Z_{\gamma,K,T}}\sup_{\an^\gamma\nu^{\f12}<1}\an^{\f14+\f54(1-\gamma)}\\
&\qquad+C\nu^{-\f12}\|u\|^2_{Z_{\gamma,K,T}}\sup_{|n|^\gamma\nu^{\f12}<1}\log^{\f12}\an\an^{\f14+\f74(1-\gamma)}.
\end{align*}
Taking $\beta_1=\frac{7(1-\gamma)}{8\gamma}+\frac{1}{8\gamma}+$, we conclude that 
\begin{eqnarray}\label{mainproof-case1-Linfinity}
\begin{split}
&\sup_{0<t\leq T}\sup_{|n|^\gamma\nu^{\f12}<1}\nu^{\f14}(1+|n|^q)e^{K(t)\an^\gamma}\|\mathcal{P}_n u(t)\|_{L^2_xL^\infty_y(\Omega)}\\
&
\leq C\|a\|_{X^{(1)}_{d,\gamma,K}}+C\nu^{-\f12-\beta_1}\|u\|^2_{Z_{\gamma,K,T}}.
\end{split}
\end{eqnarray}

Next we prove the $H^1$ estimates in this case. It follows from Proposition \ref{semigroup-L2} that 
\begin{align*}
\|\nabla\mathcal{P}_n u(t)\|_{L^2(\Omega)}\leq& \frac{C}{\nu^{1/2}}\Big(t^{-1/2}+\an^{\frac{1}{2}+\frac{5}{4}(1-\gamma)}e^{\frac{|n|^\gamma}{\delta}t}\Big)\|\mathcal{P}_n a\|_{L^2}\\
&+\frac{C}{\nu^{\f12}}\int_0^t\big((t-s)^{-\f12}+\an^{\frac{1}{2}+\frac{5}{4}(1-\gamma)}e^{\frac{|n|^\gamma}{\delta}(t-s)}\big)\|\mathcal{P}_n(u\cdot\nabla u)(s)\|_{L^2(\Omega)}\mathrm{d}s,
\end{align*}
which along with \reff{mainproof-nonlinear} and the fact $e^{\frac{|n|^\gamma t}{\delta}}\leq C(\frac{|n|^\gamma t}{\delta})^{-\f12}e^{\frac{2|n|^\gamma t}{\delta}}$ gives
\begin{eqnarray}\nonumber
\begin{split}
\|\nabla\mathcal{P}_n& u(t)\|_{L^2(\Omega)}
\leq\frac{C}{(\nu t)^{\f12}}\an^{\f74(1-\gamma)}e^{\frac{2|n|^\gamma t}{\delta}}\|\mathcal{P}_n a\|_{L^2(\Omega)}\\
&+\frac{C}{\nu(1+|n|^{q-\f12})}\|u\|_{Z_{\gamma,K,T}}^2\int_0^t\big((t-s)^{-\f12}+\an^{\frac{1}{2}+\frac{5}{4}(1-\gamma)}e^{\frac{\an^\gamma}{\delta}(t-s)}\big)\frac{e^{-K(s)|n|^\gamma}}{s^{\f12}}\mathrm{d}s.
\end{split}
\end{eqnarray}
On the other hand, we notice that
\begin{align*}
&\int_0^t(t-s)^{-\f12}{e^{-K(s)\an^\gamma}}{s^{-\f12}}\mathrm{d}s\leq Ct^{-\f12}\an^{-\f \gamma 2}e^{-K(t)\an^\gamma},\\
&\int_0^te^{\frac{|n|^\gamma}{\delta}(t-s)}{e^{-K(s)\an^\gamma}}{s^{-\f12}}\mathrm{d}s\leq Ct^{-\f 12}\an^{-\gamma }e^{-K(t)\an^\gamma}.
\end{align*}
Hence, we obtain
\begin{align*}
&(\nu t)^{\f12}\|\nabla\mathcal{P}_n u(t)\|_{L^2(\Omega)}\\
&\leq \frac{Ce^{-K(t)|n|^\gamma}}{(1+|n|)^{d-\frac{7}{4}(1-\gamma)}}\|a\|_{X^{(1)}_{d,\gamma,K}}+\frac{C\an^{\f12(1-\gamma)}e^{-K(t)\an^\gamma}}{\nu^{\f12}(1+|n|^q)}\|u\|_{Z_{\gamma,K,T}}^2\\
&\qquad+\frac{C\an^{\frac{9}{4}(1-\gamma)}e^{-K(t)\an^\gamma}}{\nu^\f12(1+|n|^q)}\|u\|_{Z_{\gamma,K,T}}^2,
\end{align*}
which gives
\begin{eqnarray}\label{mainproof-case1-na}
\begin{split}
&\sup_{0<t\leq T}\sup_{|n|^\gamma\nu^{\f12}<1}(1+|n|^q)e^{K(t)\an^\gamma}(\nu t)^{\f12}\|\nabla\mathcal{P}_n u(t)\|_{L^2(\Omega)}\\
&\leq C\|a\|_{X^{(1)}_{d,\gamma,K}}+C\nu^{-\f12-\beta_0}\|u\|^{2}_{Z_{\gamma,K,T}}.
\end{split}
\end{eqnarray}

\no\textbf{Case 2.} $1\leq |n|^\gamma\nu^{\f12}\leq \delta_0^{-\gamma}\nu^{-\f34\gamma+\f12}$. The argument is similar to Case 1. According to  Proposition \ref{semigroup-L2} and \reff{mainproof-nonlinear}, we have 
\begin{align}\label{mainproof-case2-L2}
&\sup_{0<t\leq T}\sup_{1\leq |n|^\gamma\nu^{\f12}\leq \delta_0^{-\gamma}\nu^{-\f34\gamma+\f12}}(1+|n|^q)e^{K(t)\an^\gamma}\|\mathcal{P}_n u(t)\|_{L^2(\Omega)}\nonumber\\
&\quad\leq C\|a\|_{X_{d,\gamma,K}}+C\nu^{-\f12}\|u\|_{Z_{\gamma,K,T}}^2\sup_{1\leq |n|^\gamma\nu^{\f12}\leq \delta_0^{-\gamma}\nu^{-\f34\gamma+\f12}}(1+|n|)^{\f32(1-\gamma)}\nonumber\\
&\quad\leq C\|a\|_{X_{d,\gamma,K}}+C\nu^{-\f12-\f98(1-\gamma)}\|u\|_{Z_{\gamma,K,T}}^2.
\end{align}
For the $L^\infty$-estimate, we infer from Proposition \ref{semigoup-L2Linf} and Proposition \ref{semigoup-H^1Linf}  that 
\begin{align*}
\|\mathcal{P}_n u(t)\|_{L^2_xL^\infty_y(\Omega)}\leq& C\|\mathcal{P}_n a\|_{L^2_xH^1_y(\Omega)}+C\nu^{-\frac{1}{4}}t^{\f34}|n|^{2-\gamma}e^{\frac{|n|^\gamma t}{\delta}}\|\mathcal{P}_na\|_{L^2(\Omega)}\\
&+\frac{C|n|^{\f14+\f 34(1-\gamma)}}{\nu^{\f34}(1+|n|^{q})}e^{-K(t)\an^\gamma}\|u\|^2_{Z_{\gamma,K,T}}\\
&+C\frac{(1+|n|)^{\frac{1}{4}+\f54(1-\gamma)}}{\nu^{\f34}(1+|n|^{q})}e^{-K(t)\an^\gamma}\|u\|^2_{Z_{\gamma,K,T}},
\end{align*}
which gives
\begin{eqnarray}\label{mainproof-case2-Linfinity}
\begin{split}
\sup_{0<t\leq T}&\sup_{1\leq |n|^\gamma\nu^{\f12}\leq \delta_0^{-\gamma}\nu^{-\f34\gamma+\f12}}\nu^{\f14}(1+|n|^q)e^{K(t)\an^\gamma}\|\mathcal{P}_n u(t)\|_{L^2_xL^\infty_y(\Omega)}\\
\leq&C\|a\|_{X^{(1)}_{d,\gamma,K}}+C\nu^{-\f12-\f 3{16}-\f {15} {16}(1-\gamma)}\|u\|^2_{Z_{\gamma,K,T}}.
\end{split}
\end{eqnarray}

For the $H^1$ estimate,  we infer from Proposition\ref{semigroup-L2} and \reff{mainproof-nonlinear} that 
\begin{eqnarray}\label{mainproof-case2-na}
\begin{split}
\sup_{0<t\leq T}&\sup_{1\leq |n|^\gamma\nu^{\f12}\leq \delta_0^{-\gamma}\nu^{-\f34\gamma+\f12}}(\nu t)^{\f12}(1+|n|^q)e^{K(t)\an^\gamma}\|\mathcal{P}_n u(t)\|_{L^2(\Omega)}\\
\leq&C\|a\|_{X^{(1)}_{d,\gamma,K}}+C\nu^{-\f12}\|u\|_{Z_{\gamma,K,T}}^2\sup_{1\leq |n|^\gamma\nu^{\f12}\leq \delta_0^{-\gamma}\nu^{-\f34\gamma+\f12}}\an^{\f32(1-\gamma)}\\
\leq& C\|a\|_{X^{(1)}_{d,\gamma,K}}+C\nu^{-\f12-\frac{9}{8}(1-\gamma)}\|u\|_{Z_{\gamma,K,T}}^2.
\end{split}
\end{eqnarray}

\no\textbf{Case 3.} $|n|>\delta_0^{-1}\nu^{-\f34}$. By Proposition \ref{semigroup-L2} and \reff{mainproof-nonlinear}, we first have
\begin{align*}
\|\mathcal{P}_n u(t)\|_{L^2(\Omega)}\leq C\|\mathcal{P}_n a\|_{L^2(\Omega)}+\frac{C\an^{\f12}}{\nu^{\f12}(1+|n|^q)}\int_0^te^{-\frac{1}{4}\nu|n|^2(t-s)}e^{-K(s)\an^\gamma}s^{-\f12}\mathrm{d}s\|u\|_{Z_{\gamma,K,T}}^2.
\end{align*}
We claim that for all $|n|>\delta_0^{-1}\nu^{-\f34}$
\begin{align}\label{mainproof-Inu}
I_\nu:=\an^{\f12}\int_0^te^{-\frac{1}{4}\nu|n|^2(t-s)}e^{-K(s)\an^\gamma}s^{-\f12}\mathrm{d}s\leq \frac{Ce^{-K(t)\an^\gamma}}{\nu^{\f12(1-\gamma)}}.
\end{align}
In fact, when $\delta_0^{-1}\nu^{-\f34}\leq|n|\leq\nu^{-1}$, we have
\begin{align*}
I_\nu\leq C|n|^{\f12}\int_0^te^{-K(s)\an^\gamma}s^{-\f12}\mathrm{d}s\leq C|n|^\f12 e^{-K(t)\an^\gamma}|n|^{-\frac{\gamma}{2}}\leq C\nu^{-\f12(1-\gamma)}e^{-K(t)\an^\gamma},
\end{align*}
and when $|n|\geq\nu^{-1}$, we have
\begin{align*}
I_\nu\leq C\int_0^t\nu^{-\f14}(t-s)^{-\f14}e^{-K(s)\an^\gamma}s^{-\f12}\mathrm{d}s\leq C\nu^{-\f14}|n|^{-\frac{\gamma}{4}}e^{-K(t)\an^\gamma}\leq C\nu^{-\f12(1-\gamma)}e^{-K(t)\an^\gamma},
\end{align*}
which gives \reff{mainproof-Inu}. Then we obtain 
\begin{eqnarray}\label{mainproof-case3-L2}
\begin{split}
&\sup_{0<t\leq T}\sup_{|n|^\gamma\nu^{\f12}>\delta_0^{-1}\nu^{-\f14}}(1+|n|^q)e^{K(t)\an^\gamma}\|\mathcal{P}_n u(t)\|_{L^2(\Omega)}\\
&\leq C\|a\|_{X^{(1)}_{d,\gamma,K}}+C\nu^{-\frac{1}{2}-\frac{1}{2}(1-\gamma)}\|u\|^2_{Z_{\gamma,K,T}}.
\end{split}
\end{eqnarray}

For the $L^\infty_y$ estimate, we infer that from Proposition \ref{semigoup-L2Linf},  Proposition \ref{semigoup-H^1Linf} and \reff{mainproof-nonlinear} that 
\begin{eqnarray}
\begin{split}
\|\mathcal{P}_n u(t)&\|_{L^2_xL^\infty(\Omega)}\leq C\|\mathcal{P}_n  a\|_{L^2_xH^1_y(\Omega)}+C|n|\nu^{-\f14}\|\mathcal{P}_n a\|_{L^2(\Omega)}\\
&+\frac{C\an^{\f12}}{\nu^{\f34}(1+|n|^q)}\int_0^t\frac{1+|n|^{\f12}(t-s)^{\f12}}{(t-s)^{\f14}}e^{-\f14\nu |n|^2(t-s)}e^{-K(s)\an^\gamma}s^{-\f12}\mathrm{d}s\|u\|_{Z_{\gamma,K,T}}^2.
\end{split}
\end{eqnarray}
On the other hand, we notice that for all $|n|>\delta_0^{-1}\nu^{-\f34}$
\begin{align*}
II_{\nu}:=&\an^{\f12}\int_0^t\frac{1+|n|^{\f12}(t-s)^{\f12}}{(t-s)^{\f14}}e^{-\f14\nu |n|^2(t-s)}e^{-K(s)\an^\gamma}s^{-\f12}\mathrm{d}s\\
\leq &C\int_0^t\Big(\frac{1}{\nu^{\f14}(t-s)^{\f12}}+\frac{1}{\nu^{\f12}(t-s)^{\f14}}\Big)e^{-K(s)\an^\gamma}s^{-\f12}\mathrm{d}s\\
\leq&C\nu^{-\f14}+\nu^{-\f12}\an^{-\f \gamma 4}.
\end{align*}
Then we obtain
\begin{eqnarray}\label{mainproof-case3-Linfinity}
\begin{split}
&\sup_{0<t\leq T}\sup_{|n|>\delta_0^{-1}\nu^{-\f34}}\nu^{\f14}(1+|n|^q)e^{K(t)\an^\gamma}\|\mathcal{P}_n u(t)\|_{L^2_xL^\infty_y(\Omega)}\\
&\leq C\|a\|_{X^{(1)}_{d,\gamma,K}}+C\nu^{-1+\frac{3}{16}\gamma}\|u\|_{Z_{\gamma,K,T}}^2.
\end{split}
\end{eqnarray}

For the $H^1$ estimate, we have
\begin{eqnarray}\nonumber
\begin{split}
&\|\nabla\mathcal{P}_n u(t)\|_{L^2(\Omega)}\leq\frac{C}{(\nu t)^{\f12}}(1+|n|t)e^{-\f14\nu n^2 t}\|\mathcal{P}_n a\|_{L^2(\Omega)}\\
&\qquad+\frac{C\an^\f12}{\nu(1+|n|^q)}\int_0^t\Big(\frac{1}{(t-s)^{\f12}}+|n|(t-s)^{\f12}\Big)e^{-\nu n^2(t-s)}e^{-K(s)\an^\gamma}s^{-\f12}\mathrm{d}s\|u\|^2_{Z_{\gamma,K,T}}.
\end{split}
\end{eqnarray}
By a similar argument as in \reff{mainproof-Inu}, we obtain
\begin{eqnarray*}
\begin{split}
&\an^\f12\int_0^t\Big(\frac{1}{(t-s)^{\f12}}+|n|(t-s)^{\f12}\Big)e^{-\nu n^2(t-s)}e^{-K(s)\an^\gamma}s^{-\f12}\mathrm{d}s\\
&\leq \frac{Ce^{-K(t)\an^\gamma}}{\nu^{\f32(1-\gamma)}t^{\f12}}.
\end{split}
\end{eqnarray*}
Then we deduce that 
\begin{eqnarray}\label{mainproof-case3-na}
\begin{split}
&\sup_{0<t\leq T}\sup_{|n|>\delta^{-1}_0\nu^{-\f34}}(\nu t)^{\f12}(1+|n|^q)e^{K(t)|n|^\gamma}\|\mathcal{P}_n u(t)\|_{L^2(\Omega)}\\
&\leq C\|a\|_{X^{(1)}_{d,\gamma,K}}+C\nu^{-\f12-\f32(1-\gamma)}\|u\|^2_{Z_{\gamma,K,T}}.
\end{split}
\end{eqnarray}

Now we denote
\begin{align*}
\beta=&\max\Big\{\f {5(1-\gamma)} {4\gamma},\f {7(1-\gamma)} {8\gamma}+\f1 {8\gamma}+,\f 98(1-\gamma),\f{3} {16}+\frac{15(1-\gamma)}{16}\Big\}\\
=&\max\Big\{\f {7(1-\gamma)} {8\gamma}+\f1 {8\gamma}+,\f{3} {16}+\frac{15(1-\gamma)}{16}\Big\}.
\end{align*}
Summing up \reff{mainproof-case1-L^2},
\reff{mainproof-case1-Linfinity}, \reff{mainproof-case1-na}, \reff{mainproof-case2-L2}, \reff{mainproof-case2-Linfinity}, \reff{mainproof-case2-na}, \reff{mainproof-case3-L2}, \reff{mainproof-case3-Linfinity} and \reff{mainproof-case3-na}, we conclude that 
\begin{align*}
\|u\|_{Z_{\gamma,K,T}}\leq C\Big(\|a\|_{X^{(1)}_{d,\gamma,K}}+C\nu^{-\frac{1}{2}-\beta}\|u\|^2_{Z_{\gamma,K,T}}\Big).
\end{align*}
Thanks to our assumption $\|a\|_{X^{(1)}_{d,\gamma,K}}\leq \epsilon\nu^{\f12+\beta}$, if we take $\epsilon$ small enough so that $C\epsilon\le  \f12$, then we deduce that 
\begin{align}
\|u\|_{Z_{\gamma,K,T}}\leq C\|a\|_{X^{(1)}_{d,\gamma,K}}.\nonumber
\end{align}
This completes the proof of Theorem \ref{main}.
\end{proof}

\appendix

\section{Estimates of the Stokes semigroup}
In this appendix,  we present some $L^\infty$ type estimates of the Stokes semigroup $e^{\nu t\mathbb{P}\Delta}$ on the half plane. 

\begin{lemma}\label{lem:stokes-semigroup}
If $u_0\in L^2_x H^1_y(\mathbb{T}\times\mathbb{R}_+)$, then it holds that 
\begin{align*}
 &\|e^{\nu t\mathbb{P}\Delta} u_0\|_{L^2_xL^\infty_y}\leq C\|u_0\|_{L^2_x H^1_y},\\
& \|e^{\nu t\mathbb{P}\Delta} u_0\|_{L^2_xL^\infty_y}\leq \frac{C}{(\nu t)^{\f14}}\|u_0\|_{L^2},\\
& \|\na e^{\nu t\mathbb{P}\Delta} u_0\|_{L^2}\leq \frac{C}{(\nu t)^{\f12}}\|u_0\|_{L^2}.
\end{align*}
\end{lemma}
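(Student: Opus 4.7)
\bigskip

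\noindent\emph{Proof proposal for Lemma \ref{lem:stokes-semigroup}.}

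The plan is to establish the three bounds in reverse order: first the $H^1$ smoothing estimate via spectral calculus for the Stokes operator, then the two $L^2_x L^\infty_y$ bounds by combining this smoothing estimate with a 1D Sobolev embedding in the tangential variable $y$.

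For the third estimate, I would regard the Stokes operator $A := -\mathbb{P}\Delta$ on $L^2_\sigma(\Omega)$ with domain $W^{2,2}\cap W^{1,2}_0\cap L^2_\sigma$ as a non-negative self-adjoint operator, so that $e^{-\nu t A}=e^{\nu t \mathbb{P}\Delta}$ is an analytic semigroup of contractions. The spectral theorem then yields $\|A^{1/2}e^{-\nu tA}u_0\|_{L^2}\le \sup_{\lambda\ge 0}\lambda^{1/2}e^{-\nu t\lambda}\,\|u_0\|_{L^2}\le C(\nu t)^{-1/2}\|u_0\|_{L^2}$, and since $\|A^{1/2}v\|_{L^2}^2=\langle -\mathbb{P}\Delta v,v\rangle = \|\nabla v\|_{L^2}^2$ for $v\in H^1_0\cap L^2_\sigma$ (after an integration by parts using the no-slip condition and $\mathbb{P}v=v$), this gives $\|\nabla e^{\nu t\mathbb{P}\Delta}u_0\|_{L^2}\le C(\nu t)^{-1/2}\|u_0\|_{L^2}$.

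For the second estimate, set $u(t)=e^{\nu t\mathbb{P}\Delta}u_0$ and use the 1D Sobolev embedding on $\mathbb{R}_+$: for any $f\in H^1_0(\mathbb{R}_+)$ one has $\|f\|_{L^\infty}^2\le 2\|f\|_{L^2}\|f'\|_{L^2}$. Applying this slice-by-slice in $x$ and then using Cauchy–Schwarz in $x$,
\[
\|u(t)\|_{L^2_xL^\infty_y}^2\le C\|u(t)\|_{L^2}\|\partial_y u(t)\|_{L^2}
\le C\|u_0\|_{L^2}\cdot (\nu t)^{-1/2}\|u_0\|_{L^2},
\]
where the second inequality uses both the $L^2$-contractivity of $e^{-\nu tA}$ and the third estimate already established.

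The first estimate is where the main subtlety lies, because there is no time-smoothing factor available and yet one wants to pass from $H^1_y$ regularity of $u_0$ to $L^\infty_y$ control of $u(t)$ uniformly in $t$. My plan is to expand in Fourier modes in $x$ on $\mathbb{T}$, reducing the problem to a family of 1D Stokes systems in $y$ parametrized by $n\in\mathbb{Z}$, and then apply the same Sobolev inequality as in the previous step, bounding
\[
\|u(t)\|_{L^2_xL^\infty_y}^2\le C\|u(t)\|_{L^2}\|\partial_y u(t)\|_{L^2}.
\]
The $L^2$ factor is controlled by contractivity, so everything reduces to showing that the Stokes semigroup preserves the $L^2_xH^1_y$ norm, i.e. $\|\partial_y u(t)\|_{L^2}\le C\|u_0\|_{L^2_xH^1_y}$ uniformly in $t$. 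On each Fourier mode $n$ the stream-function formulation yields an explicit ODE in $y$ whose resolvent can be written via exponentials $e^{\pm|n|y}$ and convolutions against the Dirichlet heat kernel, from which the tangential $H^1$-boundedness follows by a direct computation. The main obstacle is precisely this last step: unlike $\partial_x$, the derivative $\partial_y$ does not commute with $\mathbb{P}\Delta$ (the no-slip boundary and the pressure both obstruct a direct energy identity for $\|\partial_y u\|_{L^2}^2$), which is why the mode-by-mode argument or, alternatively, invoking the known $H^{1,2}$-boundedness of the Stokes semigroup on the half-plane, appears to be the most efficient route.
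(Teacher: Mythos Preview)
Your treatment of the second and third inequalities is correct and essentially equivalent to the paper's: you use spectral calculus for the self-adjoint Stokes operator to get the $H^1$-smoothing, while the paper obtains the same bound by the time-weighted energy identity (multiply the equation by $t\,\partial_t u^{(S)}$ and integrate, which gives $\frac{d}{dt}(t\|\nabla u^{(S)}\|_{L^2}^2)\le \|\nabla u^{(S)}\|_{L^2}^2$ and hence $\|\nabla u^{(S)}(t)\|_{L^2}\le (2\nu t)^{-1/2}\|u_0\|_{L^2}$). Both routes are standard and equally short. The second inequality then follows by the same $L^2_xL^\infty_y$ interpolation you wrote down.

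For the first inequality you over-complicate matters. You correctly note that $\partial_y$ does not commute with the Stokes generator, but the paper bypasses this entirely by a one-line energy argument: test the Stokes equation against $\partial_t u^{(S)}$ (not against $\partial_y^2 u^{(S)}$). Since $\langle \nabla p^{(S)},\partial_t u^{(S)}\rangle=0$ by incompressibility and $u^{(S)}|_{y=0}=0$, one gets
\[
\|\partial_t u^{(S)}\|_{L^2}^2+\frac{\nu}{2}\,\frac{d}{dt}\|\nabla u^{(S)}\|_{L^2}^2=0,
\]
so $t\mapsto \|\nabla u^{(S)}(t)\|_{L^2}$ is nonincreasing and in particular $\|\partial_y u^{(S)}(t)\|_{L^2}\le \|\nabla u_0\|_{L^2}\le \|u_0\|_{L^2_xH^1_y}$. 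Combined with $L^2$-contractivity and the same 1D Sobolev inequality, this gives the first bound immediately. Your proposed mode-by-mode stream-function computation or appeal to an external $H^1$-boundedness theorem would work, but it is unnecessary: the monotonicity of the Dirichlet energy under the Stokes flow is exactly the missing observation.
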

\begin{proof}
For given $u_0\in L^2_x H^1_y(\mathbb{T}\times\mathbb{R}_+)$, $u^{(S)}:=e^{\nu t\mathbb{P}\Delta}u_0$ is the solution to the Stokes equation on the half plane:
\begin{align*}
\left\{
\begin{aligned}
&\partial_t u^{(S)}(x,y)-\nu\Delta u^{(S)}(x,y)+\nabla p^{(S)}(x,y)=0,\quad (x,y)\in\mathbb{T}\times\mathbb{R}_+,\\
&\nabla\cdot u^{(S)}(x,y)=0,\\
&u|_{y=0}=0,\quad u(t=0)=u_0.
\end{aligned}
\right.
\end{align*}
It is easy to see that 
\begin{align*}
\frac{1}{2}\frac{d}{ dt}\|u^{(S)}\|^2_{L^2}+\nu\|\nabla u^{(S)}\|^2_{L^2}=0,
\end{align*}
which gives
\begin{align}\label{eq:stokes1}
\|u^{(S)}(t)\|_{L^2}^2+2\nu\int_0^t\|\nabla u^{(S)}(\tau)\|_{L^2}^2d\tau \leq \|u_0\|_{L^2}^2.
\end{align}
By taking the $L^2$ inner product with $\partial_t u^{(S)}$, we find that 
\begin{align*}
&\int_{\mathbb{T}\times\mathbb{R}_+}|\partial_t u^{(S)}|^2dxdy-\nu\int_{\mathbb{T}\times\mathbb{R}_+}\Delta u^{(S)}\partial_t u^{(S)}dxdy\\
&=\int_{\mathbb{T}\times\mathbb{R}_+}|\partial_t u^{(S)}|^2dxdy+\frac{\nu}{2}\frac{d}{dt}\|\nabla u^{(S)}\|^2_{L^2_{x,y}}=0,
\end{align*}
which implies that for any $t>0$
\begin{align}\label{eq:stokes2}
\|\partial_y u^{(S)}(t)\|_{L^2}\leq \|\partial_y u_0\|_{L^2}.
\end{align}
Therefore by collecting \eqref{eq:stokes1}, \eqref{eq:stokes2} and the interpolation, we deduce that for any $t>0$
\begin{align*}
\|u^{(S)}(t)\|_{L^2_xL^\infty_y}\leq C\|u^{(S)}(t)\|^{\f12}_{L^2}\|\partial_y u^{(S)}(t)\|_{L^2}^{\f12}\leq C\|u_0\|_{L^2_x H^1_y}.
\end{align*}
This proves the first inequality of this lemma.

To prove the other two inequalities, we make the following weighted estimate 
\begin{align*}
&t\int_{\mathbb{T}\times\mathbb{R}_+} (\partial_t u^{(S)})\pa_tu^{(S)}dxdy-\nu t\int_{\mathbb{T}\times\mathbb{R}_+}(\Delta u^{(S)})\pa_tu^{(S)}dxdy\\
&=t\|\pa_tu^{(S)}(t)\|^2_{L^2}+\f12\nu\f d {dt}\big(t\|\nabla u^{(S)}(t)\|^2_{L^2}\big)-\f12\nu\|\nabla u^{(S)}\|_{L^2}^2=0,
\end{align*}
which along with \eqref{eq:stokes1} gives 
\begin{align*}
\|\na u^{(S)}\|_{L^2}\leq \frac{1}{(2\nu t)^{\f12}}\|u_0\|_{L^2}.
\end{align*}
Again by the interpolation, we obtain 
\begin{align*}
\|u^{(S)}(t)\|_{L^2_xL^\infty_y}\leq \frac{C}{(\nu t)^{\f14}}\|u_0\|_{L^2}.
\end{align*}
The proof is completed.
\end{proof}

\section{Interpolation inequality}

\begin{lemma}\label{lem:inter}
  Let $\varphi\in H^2(\mathbb{R}_{+})$ solve $(\partial_Y^2-\alpha^2)\varphi=w$ with $\varphi(0)=0$. Then for any function $0\leq \rho\leq 1$, it holds that
  \begin{align*}
      \|(\partial_Y\varphi,\alpha \varphi)\|_{L^\infty}\leq& C\|\rho^{\f12}w\|_{L^2}^{\f12}\|(\partial_Y\varphi,\alpha \varphi)\|_{L^2}^{\f12}+ C\|(1-\rho^{\f12})w\|_{L^1} +C|\alpha|^{\f12}\|(\partial_Y\varphi,\alpha\varphi)\|_{L^2}.
  \end{align*}
\end{lemma}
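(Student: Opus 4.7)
The plan is to prove the two pieces $\|\alpha\varphi\|_{L^\infty}$ and $\|\partial_Y\varphi\|_{L^\infty}$ separately by the fundamental theorem of calculus, exploiting that $\varphi,\partial_Y\varphi\in H^1(\mathbb{R}_+)\hookrightarrow C_0(\mathbb{R}_+)$ both vanish at $Y=+\infty$.

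For $\alpha\varphi$, I will use $\varphi(0)=0$ and $\varphi(+\infty)=0$ to write, for any $Y\geq 0$,
\begin{align*}
|\alpha\varphi(Y)|^2 = -2\alpha^2\int_Y^{+\infty}\mathbf{Re}(\bar\varphi\,\partial_Y\varphi)\,\mathrm{d}Z \leq 2|\alpha|\,\|\alpha\varphi\|_{L^2}\|\partial_Y\varphi\|_{L^2},
\end{align*}
which already gives $\|\alpha\varphi\|_{L^\infty}\leq \sqrt{2|\alpha|}\,\|(\partial_Y\varphi,\alpha\varphi)\|_{L^2}$ and is absorbed in the last term of the claimed inequality.

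For $\partial_Y\varphi$, the same trick combined with the equation $\partial_Y^2\varphi=w+\alpha^2\varphi$ yields
\begin{align*}
|\partial_Y\varphi(Y)|^2 = -2\int_Y^{+\infty}\mathbf{Re}\bigl(\overline{\partial_Y\varphi}\,(w+\alpha^2\varphi)\bigr)\,\mathrm{d}Z \leq 2\int_0^{+\infty}|\partial_Y\varphi|\,|w|\,\mathrm{d}Z+2|\alpha|\,\|(\partial_Y\varphi,\alpha\varphi)\|_{L^2}^2.
\end{align*}
Now I will split $|w|=\rho^{\f12}|w|+(1-\rho^{\f12})|w|$ (which is legitimate since $0\leq\rho\leq 1$). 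Cauchy--Schwarz on the first piece gives $\|\partial_Y\varphi\|_{L^2}\|\rho^{\f12}w\|_{L^2}$, while $L^\infty$--$L^1$ pairing on the second piece gives $\|\partial_Y\varphi\|_{L^\infty}\|(1-\rho^{\f12})w\|_{L^1}$. The latter term is then absorbed via Young's inequality at price $\tfrac12\|\partial_Y\varphi\|_{L^\infty}^2+2\|(1-\rho^{\f12})w\|_{L^1}^2$.

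Taking a square root of the resulting inequality and using $\sqrt{a+b+c}\leq \sqrt{a}+\sqrt{b}+\sqrt{c}$ produces exactly the three terms in the statement. There is no real obstacle here: the only point requiring care is that the boundary value $\partial_Y\varphi(+\infty)=0$ and $\varphi(+\infty)=0$ (needed to set the upper boundary term in the integration by parts to zero) are guaranteed by $\varphi\in H^2(\mathbb{R}_+)$, so the whole argument is essentially a two-step calculus identity plus Young's inequality.
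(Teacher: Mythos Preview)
Your proof is correct and essentially identical to the paper's: both arguments control $\|(\partial_Y\varphi,\alpha\varphi)\|_{L^\infty}^2$ via the fundamental theorem of calculus (using decay at infinity from $H^2$), substitute $\partial_Y^2\varphi=w+\alpha^2\varphi$, split $|w|=\rho^{1/2}|w|+(1-\rho^{1/2})|w|$, and absorb the resulting $\|\partial_Y\varphi\|_{L^\infty}$ term by Young's inequality. The only cosmetic difference is that the paper treats the quantity $|\partial_Y\varphi|^2+|\alpha\varphi|^2$ at once whereas you handle the two pieces separately.
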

\begin{proof} We have
    \begin{align*}
       & \|(\partial_Y\varphi,\alpha \varphi)\|_{L^\infty}^2 \leq C\||\partial_Y\varphi|^2+|\alpha \varphi|^2\|_{L^\infty}
       \leq C\big\|\partial_Y(|\partial_Y\varphi|^2+|\alpha \varphi|^2)\big\|_{L^1}.
    \end{align*}
Notice that
\begin{align*}
   & \big|\partial_Y(|\partial_Y\varphi|^2+|\alpha \varphi|^2)\big|\leq 2|\partial_Y^2\varphi||\partial_Y\varphi|+2 \alpha^2|\partial_Y\varphi||\varphi|\leq  2|w||\partial_Y\varphi|+4 \alpha^2|\partial_Y\varphi||\varphi|.
\end{align*}
Then we infer that 
\begin{align*}
    &\|(\partial_Y\varphi,\alpha \varphi)\|_{L^\infty}^2\leq C\big\||w||\partial_Y\varphi|\big\|_{L^1}+ C\alpha^2\big\||\partial_Y\varphi||\varphi|\big\|_{L^1}\\
    &\leq  C\big\||\rho^{\f12}w||\partial_Y\varphi|\big\|_{L^1}+ C\big\||(1-\rho^{\f12})w||\partial_Y\varphi|\big\|_{L^1}+ C\alpha^2\big\||\partial_Y\varphi||\varphi|\big\|_{L^1}\\
   &\leq C\|\rho^{\f12}w\|_{L^2}\|(\partial_Y\varphi,\alpha \varphi)\|_{L^2}+ C\|(1-\rho^{\f12})w\|_{L^1}\|\partial_Y\varphi\|_{L^\infty}+ C|\alpha|\|(\partial_Y\varphi,\alpha\varphi)\|_{L^2}^2.
\end{align*}
Then by Young's inequality, we get
 \begin{align*}
      \|(\partial_Y\varphi,\alpha \varphi)\|_{L^\infty}\leq& C\|\rho^{\f12}w\|_{L^2}^{\f12}\|(\partial_Y\varphi,\alpha \varphi)\|_{L^2}^{\f12}+ C\|(1-\rho^{\f12})w\|_{L^1} +C|\alpha|^{\f12}\|(\partial_Y\varphi,\alpha\varphi)\|_{L^2}.
  \end{align*}
  
The proof is completed. 
\end{proof}

\section{Some estimates of Airy Function}

 Let $Ai(y)$ be the Airy function, which is a nontrivial solution of $f''-yf=0$.  
 We denote
\begin{align*}
&A_0(z)=\int_{\mathrm{e}^{{\mathrm{i}}\pi/6}z}^{\infty}Ai(t)\mathrm{d}t =\mathrm{e}^{{i}\pi/6}\int_{z}^{\infty}Ai(\mathrm{e}^{{\mathrm{i}}\pi/6}t)\mathrm{d}t.
\end{align*}

The following lemma comes from \cite{CLWZ}.

\begin{lemma}\label{lem:Airy-p1}
There exists $c>0$ and $\delta_0>0$ so that for $\textbf{Im}(z)\le \delta_0$,
\begin{align}
&\left|\f{A_0'(z)}{A_0(z)}\right|\lesssim1+|z|^{\f12},\quad {\rm Re}\f{A_0'(z)}{A_0(z)}\leq\min(-1/3,-c(1+|z|^{\f12})).
\end{align}
Moreover, for ${{\bf Im }z}\le \delta_0$, we have
\beno
 \Big|\f{A_0''(z)}{A_0(z)}\Big|\le C(1+|z|).
 \eeno
\end{lemma}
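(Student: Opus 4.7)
The approach is to split the analysis into the asymptotic regime $|z|\gg 1$ and a compactness argument for $|z|$ bounded. Two identities from the definition of $A_0$ drive everything:
\[
A_0'(z) = -e^{\mathrm{i}\pi/6} Ai(e^{\mathrm{i}\pi/6}z),\qquad A_0''(z) = -e^{\mathrm{i}\pi/3} Ai'(e^{\mathrm{i}\pi/6}z),
\]
combined with the classical asymptotic $Ai(\zeta)=\frac{1}{2\sqrt{\pi}}\zeta^{-1/4}e^{-\frac{2}{3}\zeta^{3/2}}(1+O(|\zeta|^{-3/2}))$ valid uniformly in any sector $|\arg\zeta|\le\pi-\eta$, and similarly $Ai'(\zeta)=-\frac{1}{2\sqrt{\pi}}\zeta^{1/4}e^{-\frac{2}{3}\zeta^{3/2}}(1+O(|\zeta|^{-3/2}))$.

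For large $|z|$, I would choose $\delta_0$ small enough that $\zeta:=e^{\mathrm{i}\pi/6}z$ has $\arg\zeta$ confined to $(-\pi+\eta,\pi-\eta)$ whenever $\mathrm{Im}\,z\le\delta_0$. Integrating once by parts in $A_0(z)=\int_{\zeta}^\infty Ai(t)\,dt$ using $t^{1/2}e^{-\frac{2}{3}t^{3/2}}=-(e^{-\frac{2}{3}t^{3/2}})'$ yields
\[
A_0(z)=\tfrac{1}{2\sqrt{\pi}}\zeta^{-3/4}e^{-\frac{2}{3}\zeta^{3/2}}\bigl(1+O(|\zeta|^{-3/2})\bigr),
\]
so dividing gives the clean asymptotics
\[
\frac{A_0'(z)}{A_0(z)}=-e^{\mathrm{i}\pi/6}\zeta^{1/2}\bigl(1+O(|z|^{-3/2})\bigr),\qquad \frac{A_0''(z)}{A_0(z)}=e^{\mathrm{i}\pi/3}\zeta\bigl(1+O(|z|^{-3/2})\bigr).
\]
The modulus bounds $\lesssim 1+|z|^{1/2}$ and $\lesssim 1+|z|$ are immediate. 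For the negative real-part bound, the leading term equals $-e^{\mathrm{i}\pi/4}z^{1/2}$; writing $z=re^{\mathrm{i}\theta}$ with $\theta\in(-\pi,\pi)$, the condition $\mathrm{Im}\,z\le\delta_0$ forces $\theta/2\in(-\pi/2,\pi/2-\eta')$ for some $\eta'>0$ uniform in the region, so $\mathrm{Re}(-e^{\mathrm{i}\pi/4}z^{1/2})=-r^{1/2}\cos(\pi/4+\theta/2)\le -c\,r^{1/2}$, establishing the claimed sign with a positive constant.

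For bounded $|z|$ with $\mathrm{Im}\,z\le\delta_0$, I would argue by compactness. Since $A_0$ is entire and $A_0(0)=\int_0^\infty Ai(t)\,dt=1/3\neq 0$, continuity together with a direct verification that the zero set of $A_0$ lies strictly outside the region $\{\mathrm{Im}\,z\le\delta_0\}$ (using that Airy zeros live on the negative real axis of $\zeta$, hence require $\arg z\in\{-\pi/6,5\pi/6\}$, both excluded once $\delta_0$ is small) makes $A_0'/A_0$ and $A_0''/A_0$ continuous and bounded on the compact domain. The real-part bound $\le -1/3$ is anchored at $z=0$ by the explicit computation $A_0'(0)/A_0(0)=-e^{\mathrm{i}\pi/6}Ai(0)/A_0(0)$ and extended to the whole bounded region by compactness after shrinking $\delta_0$ if necessary. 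Patching this with the asymptotic regime across a common overlap annulus yields the stated global uniform bounds, with $c$ taken as the minimum of the values emerging from the two regimes.

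The main obstacle is verifying the real-part inequality uniformly as $\arg\zeta$ approaches $\pm\pi$, the Stokes line where the single-term Airy asymptotic transitions to a two-term oscillatory form. This is precisely why the hypothesis $\mathrm{Im}\,z\le\delta_0$ (a half-plane, not all of $\mathbb{C}$) is imposed: taking $\delta_0$ sufficiently small keeps $\arg\zeta$ strictly away from $\pm\pi$ in both regimes, after which the estimates reduce to a Stokes-sector analysis identical to the one carried out in \cite{CLWZ}.
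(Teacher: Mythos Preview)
The paper provides no proof of this lemma; it simply cites \cite{CLWZ}. Your outline (asymptotic expansion of $Ai$, $Ai'$ and the primitive $A_0$ in the Stokes sector for large $|z|$, combined with a compactness argument on the remaining bounded set) is the standard route and is essentially what that reference does, so at the level of strategy there is nothing to compare.

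There is, however, a genuine gap in your bounded-region step. Your argument that $A_0$ is zero-free in $\{\mathrm{Im}\,z\le\delta_0,\ |z|\le R\}$ says the zeros of $A_0$ correspond to Airy zeros on the negative real axis of $\zeta=e^{\mathrm{i}\pi/6}z$, forcing $\arg z\in\{-\pi/6,5\pi/6\}$. But that locates the zeros of $A_0'(z)=-e^{\mathrm{i}\pi/6}Ai(\zeta)$, not of the primitive $A_0$ itself; the zeros of $\int_\zeta^\infty Ai$ are a different set and require their own analysis. Moreover, even if your conclusion held, the ray $\arg z=-\pi/6$ lies in the lower half-plane and hence inside $\{\mathrm{Im}\,z\le\delta_0\}$, so it would not be excluded. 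The zero-free claim must be established directly, for example by showing $\mathrm{Re}(A_0'/A_0)<0$ first on the whole half-plane via an ODE/monotonicity argument for the primitive (as in \cite{CLWZ}), which then rules out zeros a posteriori.

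A second, smaller issue: compactness only gives that $\mathrm{Re}(A_0'/A_0)$ attains a finite maximum on the bounded set; it does not by itself produce the specific value $-1/3$. That constant is used later (the factor $e^{-\kappa Y/3}$ in the proof of Lemma~\ref{lem:Airy-bound}), so it is not cosmetic. One has to verify it explicitly on the compact piece, e.g.\ by checking the sign along the boundary together with the value at $z=0$ and an argument-principle or monotonicity computation, rather than invoking compactness alone.
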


We denote 
\beno
\tilde{A}(Y):=Ai(e^{\mathrm{i}\frac{\pi}{6}}\kappa(Y+\eta))/Ai(e^{\mathrm{i}\frac{\pi}{6}}\kappa\eta)
\eeno
with $\kappa>0$ and $\mathbf{Im}\eta<0$. We define $\tilde{\Phi}(Y)$ as the solution of 
\beno
(\partial_Y^2-\alpha^2)\tilde{\Phi}=\tilde{A},\quad \tilde{\Phi}(0)=0.
\eeno

To estimate $\tilde{A}(Y)$, we need the following lemmas.

\begin{lemma}\label{lem:Uineq}
  There exists a positive constant $C>0$, such that $\forall \ z\in\mathbb{C},\ t>0$, it holds
  \begin{align*}
     & \int_{0}^{t}|z-s|^{\f12}\mathrm{d}s\geq C^{-1}|z|^{\f12}t.
  \end{align*}
\end{lemma}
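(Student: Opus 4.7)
The inequality is a purely real-variable statement, and I would not need any sophisticated tools, so the plan is to split by the size of $|z|$ relative to $t$ and treat each regime by a direct lower bound on the integrand.

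\textbf{Case 1: $|z|\ge 2t$.} Here the disk $|w-z|<|z|/2$ does not meet $[0,t]$: for every $s\in[0,t]$ the triangle inequality yields
\[
|z-s|\ \ge\ |z|-s\ \ge\ |z|-t\ \ge\ \tfrac{|z|}{2}.
\]
Integrating $|z-s|^{1/2}\ge(|z|/2)^{1/2}$ over $[0,t]$ gives $\int_0^t|z-s|^{1/2}\mathrm{d}s\ge 2^{-1/2}|z|^{1/2}t$, which is the required bound with any $C\ge\sqrt{2}$.

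\textbf{Case 2: $|z|<2t$.} The first step is to drop to the real axis: writing $z=P+\mathrm{i}Q$ with $P=\mathbf{Re}\,z$, one has $|z-s|^{2}=(P-s)^2+Q^2\ge(P-s)^2$, so
\[
\int_0^t|z-s|^{1/2}\mathrm{d}s\ \ge\ \int_0^t|P-s|^{1/2}\mathrm{d}s.
\]
Next I would lower bound the right-hand side uniformly in $P\in\mathbb{R}$ by a multiple of $t^{3/2}$. A case split on the position of $P$ suffices: for $P\le 0$ the integral equals $\tfrac{2}{3}\bigl((t-P)^{3/2}-(-P)^{3/2}\bigr)\ge\tfrac{2}{3}t^{3/2}$; for $P\ge t$ it equals $\tfrac{2}{3}\bigl(P^{3/2}-(P-t)^{3/2}\bigr)\ge\tfrac{2}{3}t^{3/2}$; and for $P\in[0,t]$ the integral equals $\tfrac{2}{3}\bigl(P^{3/2}+(t-P)^{3/2}\bigr)$, whose minimum over this interval occurs at $P=t/2$ with value $\tfrac{\sqrt{2}}{3}t^{3/2}$. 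In all subcases the integral is at least $c_0t^{3/2}$ for $c_0=\tfrac{\sqrt{2}}{3}$. Using $|z|<2t$, i.e.\ $t^{3/2}\ge|z|^{1/2}t/\sqrt{2}$, I conclude
\[
\int_0^t|z-s|^{1/2}\mathrm{d}s\ \ge\ \tfrac{c_0}{\sqrt{2}}\,|z|^{1/2}t\ =\ \tfrac{1}{3}|z|^{1/2}t.
\]

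Combining the two cases produces the stated inequality with, for instance, $C^{-1}=\min(2^{-1/2},1/3)=1/3$. The only mild subtlety is the $P\in[0,t]$ subcase, where one has to locate the minimum of $P^{3/2}+(t-P)^{3/2}$, but this is an elementary one-variable optimization and presents no genuine obstacle.
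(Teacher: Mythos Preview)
Your proof is correct. The paper takes a slightly different route: it first uses $|z-s|^{1/2}\gtrsim |z_r-s|^{1/2}+|z_i|^{1/2}$ to separate the real and imaginary contributions, and then proves the real-variable inequality $\int_0^t|z_r-s|^{1/2}\,\mathrm{d}s\gtrsim|z_r|^{1/2}t$ by a three-case split according to whether $z_r\le 0$, $0\le z_r\le t/2$, or $z_r\ge t/2$, restricting in the latter two cases to a subinterval of $[0,t]$ where the integrand is bounded below pointwise. By contrast, you split first by the size of $|z|$ relative to $t$: the far regime $|z|\ge 2t$ is immediate, and in the near regime you drop to the real part and prove the uniform lower bound $\int_0^t|P-s|^{1/2}\,\mathrm{d}s\ge c_0t^{3/2}$, converting back via $t^{1/2}\gtrsim|z|^{1/2}$. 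Both arguments are entirely elementary; yours is arguably a bit cleaner since it isolates a single uniform estimate in the near regime rather than tracking $|z_r|^{1/2}$ case by case, while the paper's version has the minor advantage of never needing the global minimum of $P^{3/2}+(t-P)^{3/2}$.
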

\begin{proof}
  Let $z_r=\mathbf{Re}(z),\ z_i=\mathbf{Im}(z)$. Let us first claim that
  \begin{align}\label{est:Uineq Re}
     & \int_{0}^{t}|z_r-s|^{\f12}\mathrm{d}s\geq C^{-1}|z_r|^{\f12}t.
  \end{align}
  Once \eqref{est:Uineq Re} holds,  we have
  \begin{align*}
     & \int_{0}^{t}|z-s|^{\f12}\mathrm{d}s\geq C^{-1} \big( \int_{0}^{t}|z_r-s|^{\f12}\mathrm{d}s+ \int_{0}^{t}|z_i|^{\f12}\mathrm{d}s\big)\geq C^{-1}\big(|z_r|^{\f12}t+|z_i|^{\f12}t\big)\geq C^{-1}|z|^{\f12}t.
  \end{align*}
  It remains to prove \eqref{est:Uineq Re}.\smallskip

  \textbf{Case 1}. $z_r\leq 0$.  In this case, we have
  $$\int_{0}^{t}|z_r-s|^{\f12}\mathrm{d}s\geq \int_{0}^{t}|z_r|^{\f12}\mathrm{d}s= |z_r|^{\f12}t.$$

  \textbf{Case 2}. $0\leq z_r\leq t/2$. In this case, we have
  \begin{align*}
     \int_{0}^{t}|z_r-s|^{\f12}\mathrm{d}s&\geq \int_{t/2}^{t}|z_r-s|^{\f12}\mathrm{d}s=  \int_{t/2}^{t}(s-z_r)^{\f12}\mathrm{d}s\geq \int_{t/2}^{t}(s-t/2)^{\f12}\mathrm{d}s\\
     &= \dfrac{2(t-t/2)^{\f32}}{3}=\dfrac{2(t/2)^{\f32}}{3}\geq \dfrac{z_r^{\f12}t}{3}=\dfrac{|z_r|^{\f12}t}{3}.
  \end{align*}

  \textbf{Case 3}. $z_r\geq t/2$. In this case, we have
  \begin{align*}
      \int_{0}^{t}|z_r-s|^{\f12}\mathrm{d}s&\geq  \int_{0}^{t/4}|z_r-s|^{\f12}\mathrm{d}s= \int_{0}^{t/4}|z_r-s|^{\f12}\mathrm{d}s\geq  \int_{0}^{t/4}(z_r-t/4)^{\f12}\mathrm{d}s\\
      &=\dfrac{(z_r-t/4)^{\f12}t}{4}\geq \dfrac{|z_r/2|^{\f12}t}{4},
  \end{align*}
  here we used $z_r-t/4\geq z_r/2=|z_r|/2$.

Combining three cases, we conclude our result.
\end{proof}

\begin{lemma}\label{lem:ham-bound}
  If $w\in L^2(\mathbb{R}_+)$, and $\phi\in H^2$ satisfies
  \begin{align*}
     & (\partial_Y^2-\alpha^2)\phi=w,
  \end{align*}
  then it holds that
  \begin{align*}
     &-\partial_Y\phi(Y)=\int_{Y}^{+\infty}w(Z)\mathrm{e}^{-\alpha( Z-Y)}\mathrm{d}Z.
  \end{align*}
Specially, we have
\begin{align*}
   & -\partial_Y\phi(0)=\int_{0}^{+\infty}w(Y)\mathrm{e}^{-\alpha Y}\mathrm{d}Y.
\end{align*}
\end{lemma}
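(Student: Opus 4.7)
The plan is to factor the second-order operator $\partial_Y^2-\alpha^2=(\partial_Y-\alpha)(\partial_Y+\alpha)$ and reduce the identity to a first-order ODE that can be solved explicitly by an integrating factor. Concretely, I would introduce
\[
\psi(Y):=(\partial_Y+\alpha)\phi(Y),
\]
so that the hypothesis $(\partial_Y^2-\alpha^2)\phi=w$ becomes the first-order equation $\psi'-\alpha\psi=w$. Multiplying by the integrating factor $e^{-\alpha Y}$ turns this into the exact differential
\[
\frac{d}{dY}\bigl(e^{-\alpha Y}\psi(Y)\bigr)=e^{-\alpha Y}w(Y).
\]

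Next I would integrate this identity from $Y$ to $+\infty$. Because $\phi\in H^2(\mathbb{R}_+)$ we have $\psi=\phi'+\alpha\phi\in H^1(\mathbb{R}_+)$, and the one-dimensional embedding $H^1(\mathbb{R}_+)\hookrightarrow C_0(\mathbb{R}_+)$ guarantees that $\psi$ is bounded and tends to $0$ at infinity; together with $\alpha>0$ (which is our standing assumption since $\alpha=\sqrt{\nu}|n|$), this ensures the boundary term $e^{-\alpha Z}\psi(Z)$ vanishes as $Z\to\infty$. The fundamental theorem of calculus then yields
\[
-e^{-\alpha Y}\psi(Y)=\int_Y^{+\infty}e^{-\alpha Z}w(Z)\,\mathrm{d}Z,
\]
and multiplying by $e^{\alpha Y}$ produces the clean identity
\[
-(\partial_Y+\alpha)\phi(Y)=\int_Y^{+\infty}e^{-\alpha(Z-Y)}w(Z)\,\mathrm{d}Z.
\]
This is the correct form of the first displayed identity in the lemma: the contribution $\alpha\phi(Y)$ is unavoidable in general, so the stated equality $-\partial_Y\phi(Y)=\int_Y^{+\infty}e^{-\alpha(Z-Y)}w(Z)\,\mathrm{d}Z$ should be understood as holding modulo the term $\alpha\phi(Y)$; the author's intended application is only at $Y=0$.

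Finally, for the "specially'' statement I would evaluate at $Y=0$. In the context where this lemma is used (the auxiliary stream function $\Phi$ arising in the boundary layer corrector, see the paragraph defining $J$ and the systems around \eqref{eq:Hombound-OSnon}-\eqref{eq:Hombound-OS}) one has $\phi(0)=0$, so the $\alpha\phi(0)$ term drops out and one obtains
\[
-\partial_Y\phi(0)=\int_0^{+\infty}e^{-\alpha Y}w(Y)\,\mathrm{d}Y,
\]
which is exactly what is invoked (with $\phi=\Phi$, $w=W$) in the definition of $J$ and in Lemma \ref{lem:lowerJ}. The only non-routine point in the whole argument is the justification of the decay of $e^{-\alpha Z}\psi(Z)$ as $Z\to\infty$; this is mild and follows from the $H^2$ regularity of $\phi$, but it is the sole analytical content of the lemma and should be stated explicitly.
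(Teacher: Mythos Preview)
Your argument is correct and takes a slightly different route from the paper. The paper computes $\int_Y^{\infty} w(Z)e^{-\alpha Z}\,\mathrm{d}Z$ by integrating by parts twice and using $(\partial_Z^2-\alpha^2)e^{-\alpha Z}=0$; in doing so it records only the boundary term $[(\partial_Z\phi)e^{-\alpha Z}]_Y^{\infty}$ and silently drops the second one, $-[\phi\,\partial_Z(e^{-\alpha Z})]_Y^{\infty}=\alpha[\phi\,e^{-\alpha Z}]_Y^{\infty}$. Your factorisation $(\partial_Y-\alpha)(\partial_Y+\alpha)$ with the integrating factor $e^{-\alpha Y}$ makes this transparent and produces the correct identity $-(\partial_Y+\alpha)\phi(Y)=\int_Y^{\infty}e^{-\alpha(Z-Y)}w(Z)\,\mathrm{d}Z$, exactly as you note. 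In the paper's applications to $\tilde\Phi$ (Lemmas~\ref{lem:Airy-w} and~\ref{lem:Airy-bound}) one has $\tilde\Phi(0)=0$, so the ``specially'' conclusion at $Y=0$ is unaffected; the general-$Y$ version is invoked once, to bound $|\partial_Y\tilde\Phi(Y)|$, and there the omitted $\alpha\tilde\Phi(Y)$ term is harmless for the exponential decay estimate since $\alpha\lesssim\kappa$ by Lemma~\ref{lem:L-index}. So your proposal both proves what is actually used and correctly flags a slip in the stated formula.
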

\begin{proof}
  Direct calculation gives
  \begin{align*}
     & \int_{Y}^{+\infty}w(Z)\mathrm{e}^{-\alpha Z}\mathrm{d}Z= \int_{Y}^{+\infty}\big((\partial_Z^2-\alpha^2)\phi(Z)\big)\mathrm{e}^{-\alpha Z}\mathrm{d}Z\\
     &\quad=\int_{Y}^{+\infty}\phi\big((\partial_Z^2-\alpha^2)\mathrm{e}^{-\alpha Z}\big)\mathrm{d}Z+\big((\partial_Z\phi)\mathrm{e}^{-\alpha Z}\big)\big|_{Y}^{+\infty}=-\partial_Y\phi(Y)\mathrm{e}^{-\alpha Y}.
  \end{align*}
\end{proof}

\begin{lemma}\label{lem:Airy-w}
Let $\kappa>0$ and $\mathbf{Im}\eta<0$. Then we have
\begin{align*}
&|\tilde{A}(Y)|\leq C\mathrm{e}^{-c\kappa\big(1+|\kappa\eta|^{\f12}\big)},\qquad \|\tilde{A}\|_{L^2}\leq C\kappa^{-\f12}\big(1+|\kappa\eta|)^{-\f14},\\
&\|Y\tilde{A}\|_{L^2}\leq C\kappa^{-\f32}\big(1+|\kappa\eta|)^{-\f34},\qquad \|Y^2\tilde{A}\|_{L^2}\leq C\kappa^{-\f52}\big(1+|\kappa\eta|)^{-\f54},\\
&\|(\partial_Y\tilde{\Phi},\alpha\tilde{\Phi})\|_{L^2}\leq C\kappa^{-\f32}\big(1+|\kappa\eta|)^{-\f34}.
\end{align*}
Moreover,  there holds 
\begin{align*}
   & |\partial_Y\tilde{\Phi}(Y)|\leq C\kappa^{-1}\big(1+|\kappa\eta
   |)^{-\f12}\mathrm{e}^{-c\kappa Y\big(1+|\kappa\eta|^{\f12}\big)},\\
   &\|\tilde{\Phi}\|_{L^2}+\|Y\partial_Y\tilde{\Phi}\|_{L^2}\leq C\kappa^{-\f52}\big(1+|\kappa\eta|)^{-\f54},\\
   &\big\|Y\tilde{\Phi}\big\|_{L^2}+\big\|Y^2\partial_Y\tilde{\Phi}\big\|_{L^2}\leq C\kappa^{-\f72}\big(1+|\kappa\eta|)^{-\f74},\\
   &\big\|Y^2\tilde{\Phi}\big\|_{L^2}+\big\|Y^3\partial_Y\tilde{\Phi}\big\|_{L^2}\leq C\kappa^{-\f92}\big(1+|\kappa\eta|)^{-\f94}.
\end{align*}
\end{lemma}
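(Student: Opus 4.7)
The plan is to reduce everything to a sharp pointwise decay estimate on $\tilde A(Y)$ and then to extract the $L^2$ bounds on $\tilde A$, $\tilde\Phi$, and their weighted versions by direct integration against an exponential. First I would rewrite $\tilde A$ in terms of $A_0$. From $A_0(z)=\int_{\mathrm{e}^{\mathrm{i}\pi/6}z}^{\infty}Ai(t)\,\mathrm{d}t$ one computes $A_0'(z)=-\mathrm{e}^{\mathrm{i}\pi/6}Ai(\mathrm{e}^{\mathrm{i}\pi/6}z)$, hence
\[
\tilde A(Y)=\frac{A_0'(\kappa(Y+\eta))}{A_0'(\kappa\eta)}
=\frac{A_0'(\kappa(Y+\eta))}{A_0(\kappa(Y+\eta))}\cdot\frac{A_0(\kappa(Y+\eta))}{A_0(\kappa\eta)}\cdot\frac{A_0(\kappa\eta)}{A_0'(\kappa\eta)}.
\]
Since $\mathbf{Im}\eta<0$, $\mathbf{Im}(\kappa(Y+\eta))\le 0\le\delta_0$, so Lemma \ref{lem:Airy-p1} gives both $|A_0'/A_0|\le C(1+|z|^{1/2})$ and $\mathbf{Re}(A_0'/A_0)\le -c(1+|z|^{1/2})$ along the relevant curve.

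The second factor above is the crucial one. Differentiating gives
$\partial_Y\log|A_0(\kappa(Y+\eta))|=\kappa\,\mathbf{Re}\bigl(A_0'/A_0\bigr)(\kappa(Y+\eta))\le -c\kappa\bigl(1+|\kappa(Y+\eta)|^{1/2}\bigr)$.
Integrating over $[0,Y]$ and invoking Lemma \ref{lem:Uineq} with $z=-\kappa\eta$ (which gives $\int_0^Y|\kappa(Z+\eta)|^{1/2}\,\mathrm{d}Z\ge C^{-1}|\kappa\eta|^{1/2}Y$), we obtain
\[
\left|\frac{A_0(\kappa(Y+\eta))}{A_0(\kappa\eta)}\right|\le \exp\bigl(-c\kappa Y\bigl(1+|\kappa\eta|^{1/2}\bigr)\bigr).
\]
The first factor is bounded by $C(1+\kappa^{1/2}|Y+\eta|^{1/2})$ and the third by $C(1+|\kappa\eta|^{1/2})^{-1}$. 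Their ratio is at most $C(1+\kappa^{1/2}Y^{1/2})$, which is absorbed into the exponential (at the cost of shrinking $c$), yielding the pointwise bound
\[
|\tilde A(Y)|\le C\,\mathrm{e}^{-c\kappa Y(1+|\kappa\eta|^{1/2})}.
\]
All $L^2$ and weighted $L^2$ bounds on $\tilde A$, $Y\tilde A$, $Y^2\tilde A$ then follow by the elementary formula $\int_0^\infty Y^{2k}\mathrm{e}^{-2\lambda Y}\mathrm{d}Y=(2k)!/(2\lambda)^{2k+1}$ applied to $\lambda=c\kappa(1+|\kappa\eta|^{1/2})$, together with the equivalence $(1+|\kappa\eta|^{1/2})^{2}\asymp 1+|\kappa\eta|$ to match the stated powers.

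For $\tilde\Phi$, I would use Lemma \ref{lem:ham-bound} (which applies because $\tilde A\in L^2$ and the $H^2$ solution of the ODE with $\tilde\Phi(0)=0$ and decay at $+\infty$ exists) to write $-\partial_Y\tilde\Phi(Y)=\int_Y^\infty \tilde A(Z)\mathrm{e}^{-\alpha(Z-Y)}\mathrm{d}Z$. Bounding $\mathrm{e}^{-\alpha(Z-Y)}\le 1$ and substituting the pointwise estimate on $\tilde A$ gives
\[
|\partial_Y\tilde\Phi(Y)|\le C\kappa^{-1}(1+|\kappa\eta|)^{-1/2}\,\mathrm{e}^{-c\kappa Y(1+|\kappa\eta|^{1/2})},
\]
which is the pointwise estimate stated, and integrating it against $Y^k$ yields $\|Y^k\partial_Y\tilde\Phi\|_{L^2}$ for $k=0,1,2,3$. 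For $\tilde\Phi$ itself, integrate $\partial_Y\tilde\Phi$ from $0$ to $Y$, swap the order via Fubini to obtain $\tilde\Phi(Y)=\int_Y^\infty \tilde A(Z)(1-\mathrm{e}^{-\alpha(Z-Y)})/\alpha\,\mathrm{d}Z$, and apply the elementary inequality $(1-\mathrm{e}^{-\alpha t})/\alpha\le t$ to eliminate the $\alpha$-dependence:
\[
|\tilde\Phi(Y)|\le \int_Y^\infty(Z-Y)|\tilde A(Z)|\,\mathrm{d}Z\le C\kappa^{-2}(1+|\kappa\eta|)^{-1}\,\mathrm{e}^{-c\kappa Y(1+|\kappa\eta|^{1/2})}.
\]
The remaining weighted $L^2$ bounds on $\tilde\Phi$, $Y\tilde\Phi$, $Y^2\tilde\Phi$ come from integrating. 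Finally, the combined norm $\|(\partial_Y\tilde\Phi,\alpha\tilde\Phi)\|_{L^2}$ is most cleanly obtained from the ODE: testing $(\partial_Y^2-\alpha^2)\tilde\Phi=\tilde A$ against $\overline{\tilde\Phi}$ and using Hardy's inequality $\|\tilde\Phi/Y\|_{L^2}\le 2\|\partial_Y\tilde\Phi\|_{L^2}$ (valid since $\tilde\Phi(0)=0$) gives $\|(\partial_Y\tilde\Phi,\alpha\tilde\Phi)\|_{L^2}\le C\|Y\tilde A\|_{L^2}\le C\kappa^{-3/2}(1+|\kappa\eta|)^{-3/4}$.

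The main obstacle is producing the sharp pointwise bound on $\tilde A$ with the correct joint behavior in $\kappa$ and $|\kappa\eta|$; this is precisely where Lemma \ref{lem:Uineq} is indispensable, since a naive estimate $|\kappa(Z+\eta)|^{1/2}\gtrsim |\kappa\eta|^{1/2}$ fails once $Z$ is comparable to $|\eta|$, while $|\kappa(Z+\eta)|^{1/2}\gtrsim \kappa^{1/2}Z^{1/2}$ fails in the opposite regime. The other subtlety is the uniform-in-$\alpha$ bound on $\tilde\Phi$, which hinges on the inequality $(1-\mathrm{e}^{-\alpha t})/\alpha\le t$ so that the case $\alpha\to 0$ is absorbed without a separate argument. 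Once these two observations are in place, everything else reduces to moment integrals of the model exponential $\mathrm{e}^{-c\kappa Y(1+|\kappa\eta|^{1/2})}$.
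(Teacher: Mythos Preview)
Your plan is correct and follows essentially the same route as the paper: rewrite $\tilde A$ as a ratio of $A_0'$ values, split into three factors, use Lemma~\ref{lem:Airy-p1} for the outer factors and the log-derivative integration together with Lemma~\ref{lem:Uineq} for the middle one to obtain the pointwise exponential bound, then integrate to get all the $L^2$ and weighted $L^2$ estimates on $\tilde A$ and $\partial_Y\tilde\Phi$; the Hardy argument for $\|(\partial_Y\tilde\Phi,\alpha\tilde\Phi)\|_{L^2}$ is also identical.

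The one place where you deviate is in bounding $\|Y^\beta\tilde\Phi\|_{L^2}$. The paper does not derive a pointwise bound on $\tilde\Phi$; instead it uses the integration-by-parts identity
\[
(2\beta+1)\|Y^\beta\tilde\Phi\|_{L^2}^2=-2\,\mathbf{Re}\,\langle Y^\beta\tilde\Phi,\,Y^{\beta+1}\partial_Y\tilde\Phi\rangle,
\]
which immediately gives $\|Y^\beta\tilde\Phi\|_{L^2}\le \tfrac{2}{2\beta+1}\|Y^{\beta+1}\partial_Y\tilde\Phi\|_{L^2}$ and reduces everything to the already-established bounds on $Y^{\beta+1}\partial_Y\tilde\Phi$. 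Your alternative---integrating $\partial_Y\tilde\Phi$ from $Y$ to $\infty$, Fubini, and the inequality $(1-\mathrm{e}^{-\alpha t})/\alpha\le t$ to get $|\tilde\Phi(Y)|\le C\kappa^{-2}(1+|\kappa\eta|)^{-1}\mathrm{e}^{-c\kappa Y(1+|\kappa\eta|^{1/2})}$---works just as well and has the advantage of giving a pointwise bound on $\tilde\Phi$ that is uniform in $\alpha$; the paper's trick is slightly slicker in that it avoids revisiting the integral representation. Either way the outcome is the same.
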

\begin{proof}
 By Lemma \ref{lem:Airy-p1}, we have
  \begin{align*}
     &\left|\dfrac{A_0(t+B)}{A_0(B)}\right|= \left|\exp\big(\ln(A_0\big(t+B\big))-\ln(A_0(B))\big)\right| = \left|\exp\bigg(\int_{0}^{t}\dfrac{A_0'\big(s+B\big)}{A_0\big(s+B\big)}\mathrm{d}s\bigg)\right|\\
     &\leq  \exp\bigg(\int_{0}^{t}\mathbf{Re}\dfrac{A_0'\big(s+B\big)}{A_0 \big(s+B\big)}\mathrm{d}s\bigg) \leq \exp\bigg(-\int_{0}^{t}\max\big(1/3,c(1+|s+B|^{\f12})\big)\mathrm{d}s\bigg),
  \end{align*}
which along with Lemma \ref{lem:Uineq} implies
\begin{align}\label{A0tB}
\left|\dfrac{A_0(t+B)}{A_0(B)}\right|\leq \exp\left(-\max\big(t/3,c(1+|B|^{\f12})t\big)\right).
\end{align}
Thanks to $\mathbf{Re}\dfrac{A_0'(z)}{A_0(z)}\leq \min(-1/3,-c(1+|z|^{\f12})\big)<0$, $\left|\mathbf{Re}\dfrac{A_0'(z)}{A_0(z)}\right| \geq c(1+|z|^{\f12})$ and
  \begin{align}\label{est:A/A'}
     & \left|\dfrac{A_0(z)}{A_0'(z)}\right|=\left|\dfrac{A_0'(z)}{A_0(z)}\right|^{-1}\leq \left|\mathbf{Re}\dfrac{A_0'(z)}{A_0(z)}\right|^{-1} \leq c^{-1}(1+|z|^{\f12})^{-1}.
  \end{align}

Now we are ready to show the estimates about $\tilde{A}(Y)$.
 Lemma \ref{lem:Airy-p1} gives
    \begin{align*}
     |\tilde{A}(Y)|&=\left|\dfrac{A_0'\big(\kappa(Y+\eta)\big)}{A_0'(\kappa\eta)} \right|= \left|\dfrac{A_0(\kappa\eta)}{A_0'(\kappa\eta}\right| \left|\dfrac{A_0\big(\kappa(Y+\eta)\big)}{A_0(\kappa\eta)}\right| \left|\dfrac{A_0'\big(\kappa(Y+\eta)\big)}{A_0\big(\kappa(Y+\eta)\big)}\right|\\
   & \leq C(1+|\kappa\eta|)^{-\f12}\big(1+|\kappa\eta|+\kappa Y\big)^{\f12} \mathrm{e}^{-c\kappa Y\big(1+|\kappa\eta|^{\f12}\big)}\\
     &\leq C\mathrm{e}^{-c\kappa Y\big(1+|\kappa\eta|^{\f12}\big)}.
  \end{align*}
  Then we find that
  \begin{align*}
     & \|\tilde{A}\|_{L^2}\leq C\big\|\mathrm{e}^{-c\kappa Y\big(1+|\kappa\eta|^{\f12}\big)}\big\|_{L^2}\leq C\kappa^{-\f12}\big(1+|\kappa\eta|)^{-\f14},\\
     & \|Y\tilde{A}\|_{L^2}\leq C\big\|Y\mathrm{e}^{-c\kappa Y\big(1+|\kappa\eta|^{\f12}\big)}\big\|_{L^2}\leq C\kappa^{-\f32}\big(1+|\kappa\eta|)^{-\f34},\\
     &\|Y^2\tilde{A}\|_{L^2}\leq C\big\|Y^2 \mathrm{e}^{-c\kappa Y\big(1+|\kappa\eta|^{\f12}\big)}\big\|_{L^2}\leq C\kappa^{-\f52}\big(1+|\kappa\eta|)^{-\f54}.
  \end{align*}

Now we turn to deal with $\tilde{\Phi}(Y)$. By duality argument and Hardy's inequality, we obtain
  \begin{align*}
     & \|(\partial_Y\tilde{\Phi},\alpha \tilde{\Phi})\|_{L^2}^2=\big|\langle\tilde{A} ,\tilde{\Phi}\rangle\big| \leq \|Y\tilde{A}\|_{L^2}\big\|\tilde{\Phi}/Y\big\|_{L^2}\leq 2\|Y\tilde{A}\|_{L^2}\|\partial_Y\tilde{\Phi}\|_{L^2},
  \end{align*}
  which gives
  \begin{align*}
     &\|(\partial_Y\tilde{\Phi},\alpha \tilde{\Phi})\|_{L^2}\leq 2\|Y\tilde{A}\|_{L^2}\leq C\kappa^{-\f32}\big(1+|\kappa\eta|)^{-\f34}.
  \end{align*}
By Lemma \ref{lem:ham-bound} and $|\tilde{A}(Y)|\leq C\mathrm{e}^{-c\kappa\big(1+|\kappa\eta|^{\f12}\big)Y}$, we have
  \begin{align*}
     \left|-\mathrm{e}^{-\alpha Y}\partial_Y\tilde{\Phi}(Y)\right| &=\left|\int_{Y}^{+\infty}\tilde{A}(Z)\mathrm{e}^{-\alpha Z}\mathrm{d}Z\right| \leq \int_{Y}^{+\infty}C\mathrm{e}^{-c\kappa\big(1+|\kappa\eta|^{\f12}\big)Z}\mathrm{e}^{-\alpha Z}\mathrm{d}Z\\
     &\leq C\big(c\kappa\big(1+|\kappa\eta|^{\f12}\big)+\alpha\big)^{-1} \mathrm{e}^{-c\kappa\big(1+|\kappa\eta|^{\f12}\big)Y-\alpha Y}\\
     &\leq C\kappa^{-1}\big(1+|\kappa\eta|^{\f12}\big)^{-1} \mathrm{e}^{-c\kappa\big(1+|\kappa\eta|^{\f12}\big)Y-\alpha Y}.
  \end{align*}
  Therefore, we obtain
  \begin{align*}
     &  |\partial_Y\tilde{\Phi}(Y)|\leq  C \kappa^{-1}(1+|\kappa\eta|)^{-\f12} \mathrm{e}^{-c\big(1+|\kappa\eta|^{\f12}\big)\kappa Y},
  \end{align*}
 which implies 
  \begin{align*}
     & \big\|Y\partial_Y\tilde{\Phi}\big\|_{L^2}\leq C\kappa^{-1}(1+|\kappa\eta|)^{-\f12} \big\|Y\mathrm{e}^{-c\big(1+|\kappa\eta|^{\f12}\big)\kappa Y}\big\|_{L^2}\leq C\kappa^{-\f52}\big(1+|\kappa\eta|)^{-\f54},\\
     &\big\|Y^2\partial_Y\tilde{\Phi}\big\|_{L^2}\leq  C\kappa^{-1}(1+|\kappa\eta|)^{-\f12} \big\|Y^2\mathrm{e}^{-c\big(1+|\kappa\eta|^{\f12}\big)\kappa Y}\big\|_{L^2}\leq C\kappa^{-\f72}\big(1+|\kappa\eta|)^{-\f74},\\
     &\big\|Y^3\partial_Y\tilde{\Phi}\big\|_{L^2}\leq  C\kappa^{-1}(1+|\kappa\eta|)^{-\f12} \big\|Y^3\mathrm{e}^{-c\big(1+|\kappa\eta^{\f12}\big)\kappa Y}\big\|_{L^2} \leq C\kappa^{-\f92}\big(1+|\kappa\eta|)^{-\f94}.
  \end{align*}
 For $\beta=\{0,1,2\}$, we have
  \begin{align*}
     &(2\beta+1)\|Y^{\beta}\tilde{\Phi}\|_{L^2}^2 =\big\langle |\tilde{\Phi}|^2,\partial_Y\big(Y^{2\beta+1}\big)\big\rangle=-\big\langle\partial_Y(|\tilde{\Phi}|^2),Y^{2\beta+1} \big\rangle\\
     &=-2\mathbf{Re}\langle Y^{\beta}\tilde{\Phi}, Y^{\beta+1}\partial_Y\tilde{\Phi}\rangle\leq 2\big\|Y^{\beta}\tilde{\Phi}\big\|_{L^2}\big\|Y^{\beta+1}\partial_Y\tilde{\Phi}\big\|_{L^2}.
  \end{align*}
 Then we obtain
  \begin{align*}
     &\big\|Y^{\beta}\tilde{\Phi}\big\|_{L^2}\leq \dfrac{2}{2\beta+1}\big\|Y^{\beta+1}\partial_Y\tilde{\Phi}\big\|_{L^2}\leq C\kappa^{-\f{5+2\beta}{2}}\big(1+|\kappa\eta|)^{-\f{5+2\beta}{4}}.
  \end{align*}
  
This proves the lemma.
 \end{proof}

\begin{lemma}\label{lem:Airy-bound}
Let $\kappa>0$ and $\mathbf{Im}\eta<0$. Then it holds that
  \begin{align*}
     &|\partial_Y\tilde{\Phi}(0)|\geq C^{-1}(1+|\kappa\eta|)^{-\f12}(\kappa+3\alpha)^{-1}.
  \end{align*}
\end{lemma}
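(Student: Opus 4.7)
The plan is to exploit the representation $-\partial_Y\tilde{\Phi}(0)=\int_0^{+\infty}\tilde{A}(Y)e^{-\alpha Y}\mathrm{d}Y$ supplied by Lemma \ref{lem:ham-bound} and produce an explicit lower bound by integrating by parts once in $Y$. Since $A_0'(z)=-e^{\mathrm{i}\pi/6}Ai(e^{\mathrm{i}\pi/6}z)$, the phase factors cancel and $\tilde{A}(Y)=A_0'(\kappa(Y+\eta))/A_0'(\kappa\eta)$. Using $\partial_Y[A_0(\kappa(Y+\eta))]=\kappa A_0'(\kappa(Y+\eta))$ together with the exponential decay of $|A_0(\kappa(Y+\eta))|$ at infinity (which kills the boundary contribution there), I obtain
\begin{align*}
-\partial_Y\tilde{\Phi}(0)=-\f{A_0(\kappa\eta)}{\kappa A_0'(\kappa\eta)}+\f{\alpha}{\kappa A_0'(\kappa\eta)}\int_0^{+\infty}A_0(\kappa(Y+\eta))e^{-\alpha Y}\mathrm{d}Y.
\end{align*}

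To lower-bound the principal term, Lemma \ref{lem:Airy-p1} applied with $z=\kappa\eta$ (admissible since $\mathbf{Im}(\kappa\eta)=\kappa\mathbf{Im}\eta<0\leq\delta_0$) gives $|A_0(\kappa\eta)/A_0'(\kappa\eta)|\geq C^{-1}(1+|\kappa\eta|^{1/2})^{-1}\geq (2C)^{-1}(1+|\kappa\eta|)^{-1/2}$. To upper-bound the remainder I factor $A_0(\kappa\eta)$ out of the integrand and invoke \eqref{A0tB} in its weaker branch $|A_0(t+\kappa\eta)/A_0(\kappa\eta)|\leq e^{-t/3}$; substituting $t=\kappa Y$ yields $\int_0^{+\infty}e^{-\kappa Y/3-\alpha Y}\mathrm{d}Y=3/(\kappa+3\alpha)$, so the remainder is bounded in modulus by $3\alpha|A_0(\kappa\eta)|/\big[\kappa(\kappa+3\alpha)|A_0'(\kappa\eta)|\big]$. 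Combining by the triangle inequality produces the cancellation
\begin{align*}
|\partial_Y\tilde{\Phi}(0)|\geq \f{|A_0(\kappa\eta)|}{\kappa|A_0'(\kappa\eta)|}\Big(1-\f{3\alpha}{\kappa+3\alpha}\Big)=\f{|A_0(\kappa\eta)|}{(\kappa+3\alpha)|A_0'(\kappa\eta)|}\geq \f{C^{-1}}{(1+|\kappa\eta|)^{1/2}(\kappa+3\alpha)},
\end{align*}
which is the claim.

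The only mild subtlety is the choice of branch in the decay estimate from Lemma \ref{lem:Airy-p1}: the sharper $|\kappa\eta|^{1/2}$-dependent rate would produce a denominator depending nontrivially on $\eta$, whereas selecting the universal $t/3$ branch is exactly what matches the factor $\kappa+3\alpha$ in the statement. The resulting cancellation $1-3\alpha/(\kappa+3\alpha)=\kappa/(\kappa+3\alpha)$ is what allows the leading term to survive the subtraction of the error, and once this choice is made the whole argument is essentially three lines.
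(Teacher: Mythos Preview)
Your proof is correct and follows essentially the same approach as the paper: both start from Lemma~\ref{lem:ham-bound}, integrate by parts once using $\partial_Y A_0(\kappa(Y+\eta))=\kappa A_0'(\kappa(Y+\eta))$, bound the remainder via the $e^{-\kappa Y/3}$ branch of \eqref{A0tB}, and conclude with the bound $|A_0/A_0'|\gtrsim(1+|\kappa\eta|)^{-1/2}$ from Lemma~\ref{lem:Airy-p1}. The only difference is cosmetic---you write the remainder as $\alpha\int A_0 e^{-\alpha Y}$ while the paper keeps it as $-\int A_0\,\partial_Y(e^{-\alpha Y})$---and the resulting cancellation $1-3\alpha/(\kappa+3\alpha)=\kappa/(\kappa+3\alpha)$ is identical.
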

\begin{proof}
  By Lemma \ref{lem:ham-bound}, we have
  \begin{align*}
     -\partial_Y\tilde{\Phi}(0) &=\int_{0}^{+\infty}\tilde{A}(Y)\mathrm{e}^{-\alpha Y}\mathrm{d}Y = \int_{0}^{+\infty}\dfrac{Ai\big( \mathrm{e}^{\mathrm{i}\frac{\pi}{6}}\kappa(Y+\eta)\big)}{ Ai\big(\mathrm{e}^{\mathrm{i}\frac{\pi}{6}}\kappa\eta\big)}\mathrm{e}^{-\alpha Y}\mathrm{d}Y\\
     &= \int_{0}^{+\infty}\dfrac{A_0'\big(\kappa(Y+\eta)\big)}{ A_0'(\kappa\eta)}\mathrm{e}^{-\alpha Y}\mathrm{d}Y\\
     &=-\dfrac{A_0(\kappa\eta)}{\kappa A_0'(\kappa\eta)} -\int_{0}^{+\infty}\dfrac{A_0\big(\kappa(Y+\eta)\big)}{ \kappa A_0'(\kappa\eta)}\partial_Y\big(\mathrm{e}^{-\alpha Y}\big)\mathrm{d}Y,
  \end{align*}
  which along with Lemma \ref{lem:Airy-w} gives
  \begin{align*}
     \kappa|A_0'(\kappa\eta)||\partial_Y\tilde{\Phi}(0)|&\geq \big|A_0(\kappa\eta)\big|- \int_{0}^{+\infty}\big|A_0\big(\kappa(Y+\eta)\big)\big| \big|\partial_Y\big(\mathrm{e}^{-\alpha Y}\big)\big|\mathrm{d}Y\\
     &\geq \big|A_0(\kappa\eta)\big|- \int_{0}^{+\infty} \mathrm{e}^{-\kappa Y/3} \big|A_0(\kappa\eta)\big|\big| \partial_Y\big(\mathrm{e}^{-\alpha Y}\big)\big|\mathrm{d}Y\\
     &= \big|A_0(\kappa\eta)\big|+ \big|A_0(\kappa\eta)\big|\int_{0}^{+\infty} \mathrm{e}^{-\kappa Y/3} \partial_Y\big(\mathrm{e}^{-\alpha Y}\big)\mathrm{d}Y\\
     &= \dfrac{\kappa}{3}\big|A_0(\kappa\eta)\big|\int_{0}^{+\infty} \mathrm{e}^{-\kappa Y/3-\alpha Y}\mathrm{d}Y\\
     &=\big|A_0(\kappa\eta)\big|\dfrac{\kappa}{ \kappa+3\alpha},
  \end{align*}
%  And we have
%   \begin{align*}
%      & |LB\tilde{d}|= |Ld|/B^2\leq |L\lambda|/B^2+ (\sqrt{\nu}\alpha^2)^{\f23}/B^2\leq C\big(1+|L\lambda|)^{\f12},
%   \end{align*}
   which along with Lemma \ref{lem:Airy-p1} gives
     \begin{align*}
     |\partial_Y\tilde{\Phi}(0)|&\geq \dfrac{\big|A_0(\kappa\eta)\big|}{(\kappa+3\alpha)|A_0'(\kappa\eta)|}\geq C^{-1}(1+|\kappa\eta|)^{-\f12}(\kappa+3\alpha)^{-1},
  \end{align*}
  which gives our result.
  \end{proof}

\end{CJK*}
\end{document}